\definecolor{grey1}{rgb}{0.0,0.0,0.0}
\definecolor{grey2}{rgb}{0.2,0.2,0.2}
\definecolor{grey3}{rgb}{0.4,0.4,0.4}
\definecolor{grey4}{rgb}{0.5,0.5,0.5}
\definecolor{myblue}{RGB}{100,100,160}
\definecolor{mygreen}{RGB}{80,160,80}
\tikzset{
  -|-/.style={
    to path={
      (\tikztostart) -| ($(\tikztostart)!#1!(\tikztotarget)$) |- (\tikztotarget)
      \tikztonodes
    }
  },
  -|-/.default=0.5,
  |-|/.style={
    to path={
      (\tikztostart) |- ($(\tikztostart)!#1!(\tikztotarget)$) -| (\tikztotarget)
      \tikztonodes
    }
  },
  |-|/.default=0.5,
}
\tikzset{
  font={\fontsize{9pt}{6}\selectfont}}
\tikzstyle{startstop1} = [rectangle, rounded corners, minimum width=5cm, minimum height=1cm,text centered, draw=black, fill=red!30]
\tikzstyle{startstop2} = [rectangle, rounded corners, minimum width=5cm, minimum height=1cm,text centered, draw=black, fill=blue!30]
\tikzstyle{startstop3} = [rectangle, rounded corners, minimum width=5cm, minimum height=1cm,text centered, draw=black, fill=orange!30]
\tikzstyle{io} = [trapezium, trapezium left angle=70, trapezium right angle=110, minimum width=3cm, minimum height=0.5cm, text centered, draw=black, fill=blue!30]
\tikzstyle{process} = [rectangle, minimum width=3cm, minimum height=1cm, text centered, draw=black, fill=orange!30]
\tikzstyle{process-phi} = [rectangle, minimum width=3cm, minimum height=1cm, text centered, draw=black, fill=blue!30]
\tikzstyle{process-psi} = [rectangle, minimum width=3cm, minimum height=1cm, text centered, draw=black, fill=green!30]
\tikzstyle{process-lambda} = [rectangle, minimum width=3cm, minimum height=1cm, text centered, draw=black, fill=orange!30]
\tikzstyle{decision} = [diamond, minimum width=3cm, minimum height=0.5cm, text centered, draw=black, fill=red!30]
\tikzstyle{arrow} = [thick,->,>=stealth]
\theoremstyle{plain}
\newtheorem{thm}{Theorem}[section]
\newtheorem{corollary}[thm]{Corollary}
\newtheorem{lemma}[thm]{Lemma}
\newtheorem{proposition}[thm]{Proposition}
\theoremstyle{definition}
\lstdefinelanguage{coco}[]{Matlab}{
	%literate={~}{{\textasciitilde}}{1},
	keywords={},
	morekeywords={},
	morekeywords=[2]{classdef,properties,private,protected,public,%
Access,Static,methods,if,function,end,for,while,else,elseif,switch,%
case,otherwise,do,repeat,until},
	otherkeywords={{,end},:end,(end,end),\{end,end\},[end,end],/private,private/},
	%morestring=[b]',
	morecomment=[l]{},
	basicstyle=\ttfamily\mdseries\footnotesize,
	commentstyle=\ttfamily\mdseries\footnotesize,
	moredelim=[l][\color{\hiddenA}\ttfamily\mdseries\footnotesize]{\%\#},
	moredelim=[l][\color{\hiddenB}\ttfamily\mdseries\footnotesize]{\%!},
	keywordstyle=\ttfamily\mdseries\footnotesize,
	keywordstyle={[2]\ttfamily\mdseries\footnotesize},
	numbers=none,
	numberstyle=\scriptsize,
	numbersep=1em,
	breaklines=false,
	breakatwhitespace=true,
	breakindent=2.5em,
	showlines=false,
	lineskip=-0.2ex,
	frame=none,
	fontadjust=true,
	columns=[c]fixed,
	basewidth={0.575em,0.45em},
	fontadjust=true,
	tabsize=3,
	showstringspaces=false,
	aboveskip=1.5\medskipamount,
	belowskip=1.5\medskipamount,
	xleftmargin=0.925em,
	xrightmargin=0em,
	rangeprefix={\%!},
	includerangemarker=false,
	belowcaptionskip=\bigskipamount
}
\lstdefinelanguage{coco-highlight-fonts}[]{coco}{
	style=highlight-fonts,
}
\lstdefinelanguage{coco-highlight-colors}[]{coco}{
	style=highlight-colors,
}
\lstdefinelanguage{coco-highlight}[]{coco-highlight-colors}{}
\DeclareMathOperator{\Ima}{Im}
\title{Adjoint-Based Projections for Uncertainty Quantification near Stochastically Perturbed Limit Cycles and Tori\thanks{A shortened treatment of the covariance problem for limit cycles, including the first example in Section 2.2, was published previously in \cite{10.1115/DETC2022-91153}. Additionally, the first author's doctoral dissertation (\url{https://www.ideals.illinois.edu/items/124667}) contains an early draft of the content of this manuscript.} }
\author{Zaid Ahsan\thanks{Department of Mechanical Science and Engineering, University of Illinois at Urbana-Champaign, Urbana, IL 61801 
  (email: danko@illinois.edu)}.
\and Harry Dankowicz\footnotemark[2] \and Christian Kuehn\thanks{Faculty of Mathematics, Technical University of Munich, Boltzmannstr. 3, 85748 Garching bei M\"{u}nchen, Germany 
  (email: ckuehn@ma.tum.de)}}
\begin{document}

\maketitle

\begin{abstract}
This paper presents a new boundary-value problem formulation for quantifying uncertainty induced by the presence of small Brownian noise near transversally stable periodic orbits (limit cycles) and quasiperiodic invariant tori of the deterministic dynamical systems obtained in the absence of noise. The formulation uses adjoints to construct a continuous family of transversal hyperplanes that are invariant under the linearized deterministic flow near the limit cycle or quasiperiodic invariant torus. The intersections with each hyperplane of stochastic trajectories that remain near the deterministic cycle or torus over intermediate times may be approximated by a Gaussian distribution whose covariance matrix can be obtained from the solution to the corresponding boundary-value problem. In the case of limit cycles, the analysis improves upon results in the literature through the explicit use of state-space projections, transversality constraints, and symmetry-breaking parameters that ensure uniqueness of the solution despite the lack of hyperbolicity along the limit cycle. These same innovations are then generalized to the case of a quasiperiodic invariant torus of arbitrary dimension. In each case, a closed-form solution to the covariance boundary-value problem is found in terms of a convergent series. The methodology is validated against the results of numerical integration for two examples of stochastically perturbed limit cycles and one example of a stochastically perturbed two-dimensional quasiperiodic invariant torus in $\mathbb{R}^2$, $\mathbb{R}^2\times S^1$, and $\mathbb{R}^2\times S^1$, respectively, for which explicit expressions may be found for the associated covariance functions using the proposed series solutions. Finally, an implementation of the covariance boundary-value problem in the numerical continuation package \textsc{coco} is applied to analyze the small-noise limit near a two-dimensional quasiperiodic invariant torus in a nonlinear deterministic dynamical system in $\mathbb{R}^4$ that does not support closed-form analysis. Excellent agreement with numerical evidence from stochastic time integration shows the potential for using deterministic continuation techniques to study the influence of stochastic perturbations for both autonomous and periodically excited deterministic vector fields.
\end{abstract}

\section{Introduction}
Asymptotically stable equilibria~\cite{Bashkirtseva2017,fan2011covariances,MENA2019146}, limit cycles~\cite{berglund2006noise,cheng2021stochastic,kurrer1991effect}, invariant curves~\cite{BASHKIRTSEVA2014236}, and transversally stable quasiperiodic invariant tori~\cite{bashkirtseva2016sensitivity,bashkirtseva2020stochastic,Guo2017} of deterministic dynamical systems continue to organize the local flow over intermediate times also with the addition of noise of small intensity. There results metastable dynamics near the deterministic limit sets with stochastic trajectories characterized over such time scales by approximately stationary Gaussian probability distributions of intersections with hyperplanes transversal to the local vector field. Knowledge of these distributions allows the investigator to predict the noise-induced spread of uncertainty about the deterministic limit set~\cite{Bashkirtseva2017,biswas2021characterising,Gao202313513,Garain2022} and the likelihood of transitions between different metastable objects~\cite{Guo2020599,Huang20234219,Jungeilges20215849,safonov2006noise,Salman202269,tadokoro2020noise,Wang2021135}.

Given a stochastic differential equation that appends noisy excitation to an underlying deterministic dynamical system, numerical integration of solution trajectories is a commonly deployed brute-force approach for the characterization of noise-induced uncertainty, especially for global analysis \cite{higham2021introduction}. While computationally expensive and necessarily limited to particular choices of parameter values and noise models, the results of such analysis provide an important reality check for theoretical predictions~\cite{biswas2021characterising}. For local analysis---of primary interest here---a more affordable alternative is the direct computation of covariance ellipsoids~\cite{bashkirtseva2004stochastic,ryashko2009confidence} in each hyperplane through the solution of an associated Lyapunov equation~\cite{bittanti2009periodic,bolzern1988periodic,Halanay1987} for a covariance tensor function. Indeed, as shown by Kuehn~\cite{kuehn2012deterministic} (see also \cite{baars2017continuation,kuehn2015efficient,kuehn2015numerical}) in the case of equilibria, such a computation may be embedded within a numerical continuation framework in order to investigate the parameter dependence of the metastable dynamics and the probabilities of noise-induced transitions between metastable objects (see, e.g., \cite{bashkirtseva2015stochastic}). One goal of this paper is to extend such a methodology, compatible with a numerical parameter continuation framework, also to the case of limit cycles and transversally stable quasiperiodic invariant tori.

To achieve such an outcome, we expand on our preliminary treatment of limit cycles in \cite{10.1115/DETC2022-91153} to formulate a smooth, regular boundary-value problem, such that pairings of parameter values, deterministic limit sets, and associated covariance tensor functions span a smooth manifold in a suitably defined variable space. Such a formulation then supports simultaneous continuation of limit sets and covariance functions using well-established software packages, e.g., \textsc{auto}~\cite{doedel2007auto} and \textsc{coco}~\cite{dankowicz2013recipes}, including the possibility of monitoring or constraining the geometry of the covariance ellipsoids during parameter variations. In this paper, we rely on an implementation in \textsc{coco} to enable numerical validation of the theoretical predictions, and make the corresponding code freely available at \url{https://github.com/hdankowicz/covariance-bvp2023-scripts}. Although we leave the utility of such a combined framework largely to future work, a natural application is to problems in optimal robust design of noise-disturbed engineering systems.

Several features distinguish our treatment from that in most of the existing literature. Foremost among these is the construction of transversal hyperplanes using projection operators defined in terms of the solutions to appropriately defined adjoint boundary-value problems with integral constraints as described in \cite{Dankowicz2022329}. Notably, this construction is invariant under local coordinate transformations. The resultant foliation of hyperplanes is also invariant under the linearized flow near the limit set, which contracts the corresponding transversal deviations exponentially fast to $0$. Neither of these properties hold for the typical construction (for autonomous systems) of transversal hyperplanes that are chosen arbitrarily~\cite{MANCHESTER20116285,Zhao2022} or required to be orthogonal to the local vector field~\cite{bashkirtseva2016sensitivity,louca2018stable}, as this construction is not invariant under local coordinate transformations, the resultant hyperplanes are not invariant under the linearized flow, and deviations tangential to the limit set must be eliminated, for example, by projection onto a local set of coordinates in each hyperplane (as in \cite{louca2018stable} and \cite{Zhao2022}; see also \cite{Giacomin20181019} for an analysis of phase diffusion along a limit cycle). Such a coordinate projection is not required by the proposed construction which proceeds to make predictions in the original coordinates. Furthermore, as is demonstrated here through several examples, for non-autonomous, periodically excited dynamical systems, the proposed construction automatically generates hyperplanes in stroboscopic sections (the \emph{a priori} choice in~\cite{biswas2021characterising}). Finally, in our computational implementation, we benefit from the existence of automated support for the construction of the adjoint boundary-value problems using the staged construction paradigm in \textsc{coco} \cite{li2018staged}.

As a consequence of these choices, we obtain a Lyapunov equation that conserves the scalar-valued image of the covariance tensor function on any pair of adjoint vectors. We restrict attention to the intersection of the zero-level sets of all such images by appending a corresponding set of boundary conditions and augmenting the Lyapunov equation with a regularizing damping term that must vanish on solutions to the full covariance boundary-value problem (see \cite{Galan-Vioque20142705,MUNOZALMARAZ20031} for a general methodology). Using the invariance of the foliation under the linearized flow and exponential contraction, we obtain a unique solution for the covariance tensor function in terms of an exponentially convergent series. The result is a function whose value at each point of the limit set is a positive semi-definite matrix with as many nonzero eigenvalues, generically, as the co-dimension of the limit set.

That the analysis for limit cycles may be extended also to the case of quasiperiodic invariant tori is demonstrated already in~\cite{bashkirtseva2016sensitivity,Guo2017,Zhao2022}. In our formulation, this extension results in several coupled partial differential equations on a cylindrical domain with characteristics parallel to the time axis and with boundary conditions, integral constraints, and regularizing terms enforcing quasiperiodicity,  non-degeneracy, and uniqueness. In our numerical implementation, we rely on a problem discretization in terms of Fourier series in the non-temporal variables and continuous, piecewise-polynomial functions along characteristics, and show how the discretized adjoint integro-differential boundary-value problems may be obtained from a variational formulation, as also implemented in~\textsc{coco}.

The remainder of this paper is arranged as follows. Section~\ref{sec: Periodic Orbits} considers the influence of noise on the dynamics near limit cycles. Rigorous derivations for systems of arbitrary dimension are there validated against the predictions of numerically integrated stochastic trajectories for two examples where the sought covariance functions may be obtained in closed form, viz., an autonomous, deterministic vector field in $\mathbb{R}^2$ obtained from the Hopf normal form and a harmonically excited, damped, linear oscillator with dynamics in $\mathbb{R}^2\times\mathbb{S}^1$. Section~\ref{sec: Quasiperiodic Tori} generalizes the construction of families of local projections to the case of transversally stable quasiperiodic invariant tori of arbitrary dimension in problems of arbitrary dimension and shows that a unique covariance matrix may again be associated with each corresponding hyperplane. Here, validation is delivered first using a two-dimensional torus of a deterministic vector field in $\mathbb{R}^2\times\mathbb{S}^1$, for which closed-form analysis of the covariance function is possible, and then a two-dimensional torus of a deterministic vector field in $\mathbb{R}^4$, which is not amenable to such analysis and for which an approximation must be obtained from a discretization of the governing boundary-value problem using the software package~\textsc{coco}. This latter implementation also illustrates how uncertainty quantification may be appended to or integrated with parameter continuation of limit cycles or quasiperiodic invariant tori. Appendix~\ref{app: Adjoint Conditions and Problem Discretization} collects important results that enable automated problem construction and approximate analysis in \textsc{coco} of the various adjoint boundary-value problems, including a choice of discretization in terms of finite sets of characteristics in the case of quasiperiodic invariant tori that can be used analogously for finding the tori in the first place and analyzing the associated covariance functions. A concluding discussion in Section~\ref{sec: Conclusions} reflects on directions for further study.

\section{Periodic Orbits}
\label{sec: Periodic Orbits}
In this section, we derive the set of adjoint and covariance boundary-value problems (previously stated without derivation in \cite{10.1115/DETC2022-91153}) whose combined solution, for a given limit cycle of an autonomous dynamical system, characterizes the metastable dynamics in the limit of small noise intensity. The theoretical predictions are validated against two examples for which closed-form expressions may be compared to the results of stochastic numerical integration.

\subsection{Theoretical derivations}
Let $\gamma(t)\in\mathbb{R}^n$ be a periodic solution with period $1$ of the smooth dynamical system defined by the ordinary differential equation (ODE)
\begin{equation}
\label{eq:determ}
\frac{\textnormal{d} x}{\textnormal{d} t}=:\dot{x}=Tf(x),\quad\mbox{ where } x:=x(t)\in\mathbb{R}^n,
\end{equation}
for some positive scalar\footnote{We obtain the form of \eqref{eq:determ} from a dynamical system of the form $y'(\eta)=f(y(\eta))$ with periodic orbit $\gamma(\eta/T)$ of period $T$ by letting $t=\eta/T$ and $x(t)=y(Tt)$. For an autonomous vector field, $T$ is not typically known \textit{a priori} and is, instead, solved for simultaneously with $\gamma$ by appending a suitable phase condition to the periodic boundary-value problem in $x$, e.g., $h(x(0))=0$ for some smooth scalar-valued function $h$. Throughout, we omit explicit dependence on problem parameters to reduce the notational complexity.} $T$. Let $X(t)$ denote the solution to the corresponding variational initial-value problem
\begin{equation}
\label{eq:varprobper}
\dot{X}=T\mathrm{D}f(\gamma)X,\quad X(0)=I_n,
\end{equation}
where $\mathrm{D}f$ denotes the Jacobian of $f$ and $I_n$ is the identity matrix of size $n\times n$. 
By periodicity of $\gamma$, it follows that $X(t+1)=X(t)X(1)$. Furthermore, $X^{-1}(t)f(\gamma(t))\equiv f(\gamma(0))$, since equality holds at $t=0$ and differentiation shows that the left-hand side is constant. As a special case, again by periodicity of $\gamma$, it follows that $f(\gamma(0))$ is a right nullvector of $X(1)-I_n$ and $f(\gamma(t))$ is a right nullvector of the linear operator
\begin{equation}
\label{eq:Gamma(t)def}
    \Gamma(t):=X(t)\left(X(1)-I_n\right)X^{-1}(t).
\end{equation}

Following \cite{Dankowicz2022329}, assume next that $\gamma$ is \textit{hyperbolic} in the sense that there exists a $1$-periodic, smooth family of projections of the form \begin{equation}
\label{eq:Q(t)perdef}
    Q(t):=X(t)(I_n-f(\gamma(0))w^\mathsf{T})X^{-1}(t)
\end{equation}
for some vector $w$, such that $\Gamma(t)$ is a bijection on the image of $Q(t)$. As shown below, this is equivalent to the standard definition of hyperbolicity in terms of the eigenvalues of the monodromy matrix $X(1)$. Importantly, the family $Q(t)$ and the complementary family $I_n-Q(t)$ respect the spectral decomposition of $\Gamma(t)+I_n$.
\subsubsection{The adjoint boundary-value problem}

The following sequence of lemmas and corollaries culminate in the formulation of the adjoint boundary-value problem in Lemma~\ref{lem:adjbvpper} and a projection of the local dynamics onto transversal directions relative to the limit cycle in Corollary~\ref{cor:transdynper}. The discussion is adapted from \cite{Dankowicz2022329} and organized to set up a point of departure for the subsequent analysis of noise-induced transversal dynamics. 

\begin{lemma}
\label{lem:normalizationper}
The vector $w$ satisfies the normalization condition
\begin{equation}
\label{eq:normalized}
w^\mathsf{T}f(\gamma(0))=1.
\end{equation}
\end{lemma}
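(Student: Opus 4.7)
The plan is to exploit the defining property of the family $Q(t)$, namely that each $Q(t)$ is a projection and hence idempotent. Since idempotency is a pointwise algebraic condition, it suffices to verify $Q(0)^2 = Q(0)$. Evaluating the definition~\eqref{eq:Q(t)perdef} at $t=0$ (where $X(0) = I_n$) gives the particularly clean form $Q(0) = I_n - f(\gamma(0)) w^\mathsf{T}$, which is a rank-one perturbation of the identity and therefore well-suited to direct manipulation.

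Expanding $Q(0)^2$ yields
\[
Q(0)^2 = I_n - 2 f(\gamma(0)) w^\mathsf{T} + f(\gamma(0))\bigl(w^\mathsf{T} f(\gamma(0))\bigr) w^\mathsf{T},
\]
and setting this equal to $Q(0) = I_n - f(\gamma(0)) w^\mathsf{T}$ reduces the idempotency condition to $\bigl(w^\mathsf{T} f(\gamma(0)) - 1\bigr)\, f(\gamma(0)) w^\mathsf{T} = 0$. The scalar prefactor is the quantity of interest, so it only remains to rule out the trivial case in which the outer product $f(\gamma(0)) w^\mathsf{T}$ vanishes. This is a short check: $f(\gamma(0)) \neq 0$ because $\gamma$ is a genuine periodic orbit rather than an equilibrium, and $w \neq 0$ because otherwise $Q(t)$ would reduce to the identity, making $\Gamma(t)$ a bijection on the entire state space and contradicting the already established fact that $f(\gamma(t))$ is a nontrivial right nullvector of $\Gamma(t)$. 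Hence the scalar factor must vanish, establishing~\eqref{eq:normalized}.

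There is essentially no technical obstacle here; the content is purely that the \emph{a priori} free vector $w$ is not entirely free but must be normalized so that the rank-one term $f(\gamma(0))w^\mathsf{T}$ correctly encodes a projection along $f(\gamma(0))$. An equivalent geometric route, worth recording as a sanity check, is to note that hyperbolicity demands $f(\gamma(t)) \in \ker Q(t)$ (so that $\Gamma(t)$ is bijective on $\mathrm{Im}\, Q(t)$ despite having $f(\gamma(t))$ in its kernel); applying $Q(0)$ to $f(\gamma(0))$ and demanding the result vanish recovers the identical normalization, confirming that~\eqref{eq:normalized} is forced by either the algebraic (idempotency) or the spectral (kernel-image decomposition) content of the assumption.
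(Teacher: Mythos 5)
Your argument is correct and follows essentially the same route as the paper: imposing idempotency of $Q(0)=I_n-f(\gamma(0))w^\mathsf{T}$ and reducing it algebraically to the normalization \eqref{eq:normalized}. Your additional check that the rank-one factor $f(\gamma(0))w^\mathsf{T}$ is nonzero (since $\gamma$ is not an equilibrium and $w=0$ would contradict the hyperbolicity assumption) is a welcome explicit filling-in of what the paper compresses into ``algebraic manipulation shows that this holds if and only if.''
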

\begin{proof}
Since $Q(0)$ is a projection, it is idempotent, i.e., $Q^2(0)=Q(0)$ or, by \eqref{eq:Q(t)perdef},
\begin{equation}
    I_n-f(\gamma(0))w^\mathsf{T}=\left(I_n-f(\gamma(0))w^\mathsf{T}\right)^2.
\end{equation}
Algebraic manipulation shows that this holds if and only if \eqref{eq:normalized} is satisfied.
\end{proof}

\begin{lemma}
\label{lem:imageandkernelper}
Let $\Ima Q(t)$ and $\ker Q(t)$ denote the image and kernel of the projection $Q(t)$. Then,
\begin{equation}
    \Ima Q(t)=\{X^{{-1},\mathsf{T}}(t)w\}^\perp\mbox{ and }\ker Q(t)=\mathrm{span}\{X(t)f(\gamma(0))\}.
\end{equation}
\end{lemma}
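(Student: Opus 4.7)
The plan is to unwind the explicit formula \eqref{eq:Q(t)perdef} for $Q(t)$ by peeling off the invertible conjugating factor $X(t)$, then exploit the rank-one structure of the inner factor $I_n - f(\gamma(0))w^\mathsf{T}$, using Lemma~\ref{lem:normalizationper} to pin down the kernel and image of that inner projection. The two assertions of the lemma will then follow by transporting those sets back through $X(t)$ (for the kernel) and through $X^{-1,\mathsf{T}}(t)$ (for the image, since orthogonality is governed by the transpose of $X^{-1}(t)$).

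For the kernel, I would argue: $v \in \ker Q(t)$ iff $(I_n - f(\gamma(0))w^\mathsf{T})X^{-1}(t)v = 0$, i.e., $X^{-1}(t)v = f(\gamma(0))\bigl(w^\mathsf{T} X^{-1}(t)v\bigr)$. Setting $\alpha := w^\mathsf{T} X^{-1}(t)v$, this forces $X^{-1}(t)v = \alpha f(\gamma(0))$; by the normalization $w^\mathsf{T} f(\gamma(0))=1$ of Lemma~\ref{lem:normalizationper}, any such $\alpha$ is consistent, so the solution set is exactly $v = \alpha X(t)f(\gamma(0))$ for $\alpha \in \mathbb{R}$, giving $\ker Q(t) = \mathrm{span}\{X(t)f(\gamma(0))\}$.

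For the image, I would use the characterization $v \in \Ima Q(t)$ iff $Q(t)v = v$ (valid because $Q(t)$ is a projection). Applying $X^{-1}(t)$ on the left of $Q(t)v = v$ and simplifying yields $f(\gamma(0))\,w^\mathsf{T} X^{-1}(t)v = 0$, which, since $f(\gamma(0)) \neq 0$ (as $T>0$ and $\gamma$ is a nontrivial periodic orbit), is equivalent to $w^\mathsf{T} X^{-1}(t)v = 0$, i.e., $\bigl(X^{-1,\mathsf{T}}(t)w\bigr)^\mathsf{T} v = 0$. Hence $\Ima Q(t) = \{X^{-1,\mathsf{T}}(t)w\}^\perp$.

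There is no real obstacle here — both halves reduce to linear algebra on the rank-one perturbation $I_n - f(\gamma(0))w^\mathsf{T}$, and the normalization from the previous lemma is exactly what is needed to ensure consistency. The only subtlety worth flagging in writing is the implicit use of $f(\gamma(0))\neq 0$ and the invertibility of $X(t)$ (which follows from the variational equation \eqref{eq:varprobper}); a dimension check ($\dim\ker Q(t)=1$ and $\dim\Ima Q(t)=n-1$, consistent with rank-nullity for a projection) can be used as an optional sanity verification.
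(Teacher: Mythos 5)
Your proof is correct and follows essentially the same route as the paper's: peel off the invertible conjugation by $X(t)$ (the paper phrases this as reducing to $t=0$) and exploit the rank-one structure of $I_n-f(\gamma(0))w^\mathsf{T}$ together with the normalization $w^\mathsf{T}f(\gamma(0))=1$ from Lemma~\ref{lem:normalizationper}. The only cosmetic difference is that you obtain the image as an exact equivalence via the fixed-point characterization $Q(t)v=v$, whereas the paper shows one inclusion and closes with rank-nullity; both are fine.
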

\begin{proof}
By \eqref{eq:Q(t)perdef} it suffices to show this for $t=0$. If $v\in\ker Q(0)$, then
\begin{equation}
    0=\left(I_n-f(\gamma(0))w^\mathsf{T}\right)v=v-f(\gamma(0))w^\mathsf{T}v,
\end{equation}
i.e., that $v\in\mathrm{span}\{f(\gamma(0))\}$. That $\ker Q(0)=\mathrm{span}\{f(\gamma(0))\}$ then follows from Lemma~\ref{lem:normalizationper}. Similarly, if $v\in\Ima Q(0)$, then there exists a vector $u$ such that
\begin{equation}
    v=(I_n-f(\gamma(0))w^\mathsf{T})u
\end{equation}
and, by Lemma~\ref{lem:normalizationper},
\begin{equation}
    w^\mathsf{T}v= \left(w^\mathsf{T}-w^\mathsf{T}f(\gamma(0))w^\mathsf{T}\right)u=0,
\end{equation}
i.e., $\Ima Q(0)\in\{w\}^\perp$. Equality then follows from the rank-nullity theorem.
\end{proof}

\begin{lemma}
\label{lem:leftnullvectorper}
The vector $\lambda(t):=X^{{-1},\mathsf{T}}(t)w$ is the unique left nullvector of $\Gamma(t)$ for which
\begin{equation}
\label{eq:normcondt}
    \lambda^\mathsf{T}(t)f(\gamma(t))=1.
\end{equation}
\end{lemma}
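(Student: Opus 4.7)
The plan is to establish existence, normalization, and uniqueness in three steps, with the crux of the argument being the extraction of a single linear identity about $w$ from the periodicity of $Q(t)$.

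First I would derive the auxiliary identity $w^\mathsf{T} X(1) = w^\mathsf{T}$, which is the algebraic heart of the claim. The $1$-periodicity of $Q$ together with $X(t+1) = X(t) X(1)$ forces $X(1)$ to commute with $I_n - f(\gamma(0)) w^\mathsf{T}$ in \eqref{eq:Q(t)perdef}. Expanding the commutator and using the previously noted fact that $(X(1) - I_n) f(\gamma(0)) = 0$ collapses the condition to the rank-one equation $f(\gamma(0)) \bigl( w^\mathsf{T} - w^\mathsf{T} X(1) \bigr) = 0$, from which $w^\mathsf{T} X(1) = w^\mathsf{T}$ follows since $f(\gamma(0)) \neq 0$.

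The second step is a direct verification using this identity. For any $t$,
\[
\lambda^\mathsf{T}(t)\,\Gamma(t) = w^\mathsf{T} X^{-1}(t) \cdot X(t)(X(1) - I_n) X^{-1}(t) = w^\mathsf{T}(X(1) - I_n) X^{-1}(t) = 0,
\]
so $\lambda(t)$ is a left nullvector of $\Gamma(t)$. The normalization then follows from the preamble identity $X^{-1}(t) f(\gamma(t)) \equiv f(\gamma(0))$ together with Lemma~\ref{lem:normalizationper}, giving $\lambda^\mathsf{T}(t) f(\gamma(t)) = w^\mathsf{T} X^{-1}(t) f(\gamma(t)) = w^\mathsf{T} f(\gamma(0)) = 1$.

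Finally, I would argue uniqueness by a dimension count. By hyperbolicity, $\Gamma(t)$ restricts to a bijection on $\mathrm{Im}\,Q(t)$, a subspace of dimension $n-1$ by Lemma~\ref{lem:imageandkernelper}. A short computation from the definitions (using $(X(1)-I_n)f(\gamma(0))=0$) shows that $\Gamma(t) Q(t) = \Gamma(t)$, so $\ker Q(t) \subseteq \ker \Gamma(t)$; combined with the direct-sum decomposition $\mathbb{R}^n = \mathrm{Im}\,Q(t) \oplus \ker Q(t)$ and the bijectivity on $\mathrm{Im}\,Q(t)$, any vector in $\ker \Gamma(t)$ must already lie in $\ker Q(t) = \mathrm{span}\{f(\gamma(t))\}$. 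Hence $\ker \Gamma(t)$ is one-dimensional, rank-nullity yields $\dim \ker \Gamma^\mathsf{T}(t) = 1$, and the scalar constraint \eqref{eq:normcondt} pins down the unique representative.

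The main obstacle is the first step, as the remaining two are essentially bookkeeping once $w^\mathsf{T} X(1) = w^\mathsf{T}$ is available. Its derivation relies on recognizing that the geometric assumption of $1$-periodicity of the projection $Q(t)$ is exactly equivalent, modulo the already-known right-nullvector property of $f(\gamma(0))$, to $w$ being a left nullvector of $X(1) - I_n$.
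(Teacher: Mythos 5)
Your proposal is correct and its core is the same as the paper's: you derive $w^\mathsf{T}X(1)=w^\mathsf{T}$ from $Q(0)=Q(1)$ by exactly the paper's manipulation (commuting $X(1)$ past $I_n-f(\gamma(0))w^\mathsf{T}$, using $X(1)f(\gamma(0))=f(\gamma(0))$ and $f(\gamma(0))\neq 0$), and the reduction of the normalization to Lemma~\ref{lem:normalizationper} is likewise identical. The only divergence is in the uniqueness step: the paper uses surjectivity of $\Gamma(0)$ onto $\Ima Q(0)$ to show any left nullvector annihilates $\Ima Q(0)=\{w\}^\perp$ (Lemma~\ref{lem:imageandkernelper}) and hence lies in $\mathrm{span}\{w\}$, whereas you use injectivity together with $\Gamma(0)Q(0)=\Gamma(0)$ to pin down $\ker\Gamma(0)=\mathrm{span}\{f(\gamma(0))\}$ (in effect re-proving the corollary that follows this lemma in the paper) and then invoke rank–nullity to get a one-dimensional left nullspace; both variants rest on the same hyperbolicity hypothesis and both are valid, with the normalization \eqref{eq:normcondt} selecting the unique representative.
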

\begin{proof}
By \eqref{eq:Gamma(t)def} it suffices to show this for $t=0$, where \eqref{eq:normcondt} reduces to \eqref{eq:normalized}. To show that $w$ is a left nullvector of $\Gamma(0)=X(1)-I_n$, note that the left nullvector property $w^\mathsf{T}\Gamma(0)=0$ holds if and only if $w^\mathsf{T}X(1)=w^\mathsf{T}$. But the latter follows by $1$-periodicity of $Q$, since $Q(0)=Q(1)$ implies that
\begin{equation}
    X(1)(I_n-f(\gamma(0))w^\mathsf{T})X^{-1}(1)=I_n-f(\gamma(0))w^\mathsf{T},
\end{equation}
which reduces to  $f(\gamma(0))w^\top=f(\gamma(0))w^\top X(1)$ by direct algebraic manipulation and use of the fact that $X(1)f(\gamma(0))=f(\gamma(0))$. Finally, suppose that $\tilde{w}$ is any left nullvector of $\Gamma(0)$ and let $v$ denote an arbitrary vector in $\Ima Q(0)$. Since $\Gamma(0)$ is onto $\Ima Q(0)$, there exists a $u$ such that $\Gamma(0)u=v$ and, consequently, $\tilde{w}^\mathsf{T}v=\tilde{w}^\mathsf{T}\Gamma(0)u=0$. By Lemma~\ref{lem:imageandkernelper}, $\tilde{w}\in\mathrm{span}\{w\}$ and the uniqueness claim follows as well.
\end{proof}

\begin{corollary}
The right nullspace of $\Gamma(t)$ consists of vectors tangential to the curve $s\mapsto\gamma(s)$ at $s=t$.
\end{corollary}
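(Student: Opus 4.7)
The plan is to prove both inclusions between $\ker\Gamma(t)$ and the tangent space $\mathrm{span}\{f(\gamma(t))\}$ to the curve $s\mapsto\gamma(s)$ at $s=t$ (recall that $\dot\gamma(t)=Tf(\gamma(t))$, so tangency is equivalent to lying in this one-dimensional span). One inclusion is already in hand: it was observed just before \eqref{eq:Gamma(t)def} that $f(\gamma(t))$ is a right nullvector of $\Gamma(t)$. The substantive task is to rule out any additional null directions, i.e., to pin down $\dim\ker\Gamma(t)=1$.

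To do so, I would exploit the splitting $\mathbb{R}^n=\Ima Q(t)\oplus\ker Q(t)$ induced by the projection $Q(t)$. Lemma~\ref{lem:imageandkernelper} identifies $\ker Q(t)=\mathrm{span}\{X(t)f(\gamma(0))\}$, which equals $\mathrm{span}\{f(\gamma(t))\}$ by applying $X(t)$ to the identity $X^{-1}(t)f(\gamma(t))\equiv f(\gamma(0))$ established just after \eqref{eq:varprobper}. By hypothesis, $\Gamma(t)$ is a bijection on $\Ima Q(t)$; in particular it leaves that subspace invariant and is injective there.

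Given $v\in\ker\Gamma(t)$, I would decompose $v=v_1+v_2$ with $v_1\in\Ima Q(t)$ and $v_2\in\ker Q(t)$. Since $v_2$ is a scalar multiple of $f(\gamma(t))$, it satisfies $\Gamma(t)v_2=0$, and hence $\Gamma(t)v_1=0$. Injectivity of $\Gamma(t)$ on $\Ima Q(t)$ then forces $v_1=0$, so $v=v_2\in\mathrm{span}\{f(\gamma(t))\}$, proving the remaining inclusion. The only mild obstacle is the correct use of the invariant direct-sum decomposition: one must note that the bijectivity hypothesis on $\Gamma(t)|_{\Ima Q(t)}$ implicitly carries invariance of $\Ima Q(t)$ under $\Gamma(t)$, and that $Q(t)$ is spectrally compatible with $\Gamma(t)+I_n$. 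Once these structural points are unpacked, the remainder is routine linear algebra.
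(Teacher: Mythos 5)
Your argument is correct and is essentially the paper's own proof in slightly different packaging: where you split $v=Q(t)v+(I_n-Q(t))v$ and use that $\Gamma(t)$ annihilates $\ker Q(t)=\mathrm{span}\{f(\gamma(t))\}$, the paper uses the equivalent operator identity $\Gamma(0)Q(0)=\Gamma(0)$ (after reducing to $t=0$ via \eqref{eq:Gamma(t)def}) and then invokes injectivity of $\Gamma(0)$ on $\Ima Q(0)$ together with Lemma~\ref{lem:imageandkernelper}, exactly as you do. Your closing caveat about invariance of $\Ima Q(t)$ is not actually needed, since injectivity of the restriction alone forces $v_1=0$.
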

\begin{proof}
By \eqref{eq:Gamma(t)def} it suffices to show this for $t=0$. By substitution from \eqref{eq:Gamma(t)def} and \eqref{eq:Q(t)perdef}, it follows that
\begin{equation}
\label{eq:G0Q0=G0}
    \Gamma(0)Q(0)=\left(X(1)-I_n\right)\left(I_n-f(\gamma(0))w^\mathsf{T}\right)=\Gamma(0),
\end{equation}
where we again used the fact that $X(1)f(\gamma(0))=f(\gamma(0))$. Now suppose that $v$ is a right nullvector of $\Gamma(0)$. It follows that $\Gamma(0)Q(0)v=\Gamma(0)v=0$. Since $\Gamma(0)$ is one-to-one on the image of $Q(0)$, $Q(0)v=0$ and, by Lemma~\ref{lem:imageandkernelper}, $v\in\mathrm{span}\{f(\gamma(0))\}$.
\end{proof}

Conversely, suppose that the eigenspace of the monodromy matrix $X(1)$ corresponding to the eigenvalue $1$ is given by $\mathrm{span}\{f(\gamma(0))\}$ and choose $w$ in the construction of $Q(t)$ as the unique left nullvector of $\Gamma(0)$ such that \eqref{eq:normalized} holds. Then, by \eqref{eq:G0Q0=G0}, $v\in\Ima Q(0)$ and $\Gamma(0)v=0$ imply that $v=0$. Similarly, since
\begin{equation}
    Q(0)\Gamma(0)=\left(I_n-f(\gamma(0))w^\mathsf{T}\right)\left(X(1)-I_n\right)=\Gamma(0),
\end{equation}
it follows from the rank-nullity theorem that if $v\in\Ima Q(0)$ then there exists a $\tilde{v}\in\Ima Q(0)$ such that $\Gamma(0)\tilde{v}=v$. We conclude that $\Gamma(0)$ is a bijection on the image of $Q(0)$ and, consequently, that $\Gamma(t)$ is a bijection on the image of $Q(t)$.

\begin{lemma}
\label{lem:adjbvpper}
The function $\lambda(t):=X^{{-1},\mathsf{T}}(t)w$ is the unique solution to the adjoint boundary-value problem
\begin{equation}
\label{eq:adjtbvpper}
0=-\dot{\mu}^\mathsf{T}-T\mu^\mathsf{T}\mathrm{D} f(\gamma),\quad 0=\mu^\mathsf{T}(0)-\mu^\mathsf{T}(1),\quad 1=\int_0^1\mu^\mathsf{T}f(\gamma(s))\,\mathrm{d}t.
\end{equation}
\end{lemma}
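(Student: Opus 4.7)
The plan is to verify the three conditions in \eqref{eq:adjtbvpper} directly from the explicit form $\lambda(t) = X^{-1,\mathsf{T}}(t)w$, and then to establish uniqueness by showing that any solution of the adjoint ODE with the periodic boundary condition must be a scalar multiple of $\lambda$, with the scalar pinned down by the integral constraint.

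First I would record the evolution law for $X^{-1}$. Differentiating $X^{-1}(t)X(t) = I_n$ and using \eqref{eq:varprobper} gives $\dot{X}^{-1} = -T X^{-1} \mathrm{D}f(\gamma)$. Transposing and multiplying on the right by $w$ then yields $\dot{\lambda}^\mathsf{T} = -T \lambda^\mathsf{T} \mathrm{D}f(\gamma)$, which is the adjoint ODE. For the boundary condition, the identity $w^\mathsf{T} X(1) = w^\mathsf{T}$ established inside the proof of Lemma~\ref{lem:leftnullvectorper} is equivalent to $X^{-1,\mathsf{T}}(1) w = w$, so that $\lambda(1) = w = \lambda(0)$ since $X(0) = I_n$. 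For the integral condition, Lemma~\ref{lem:leftnullvectorper} yields $\lambda^\mathsf{T}(t) f(\gamma(t)) = 1$ for every $t$, and integration over $[0,1]$ gives the normalization in \eqref{eq:adjtbvpper}.

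For uniqueness, I would observe that any solution $\mu$ of the adjoint ODE admits the explicit representation $\mu^\mathsf{T}(t) = \mu^\mathsf{T}(0) X^{-1}(t)$, verified by differentiation using the formula for $\dot{X}^{-1}$ above. The periodic boundary condition then forces $\mu^\mathsf{T}(0) \bigl(X(1) - I_n\bigr) = 0$, i.e., $\mu(0)$ is a left nullvector of $\Gamma(0)$. By Lemma~\ref{lem:leftnullvectorper}, the space of such nullvectors is one-dimensional and spanned by $w$, so $\mu(0) = \alpha w$ for some scalar $\alpha$ and consequently $\mu(t) = \alpha \lambda(t)$. The integral condition then gives $\alpha \int_0^1 \lambda^\mathsf{T}(t) f(\gamma(t))\,\mathrm{d}t = \alpha = 1$, which fixes $\mu = \lambda$.

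The only nontrivial step is the uniqueness argument, but its substance has already been packaged into Lemma~\ref{lem:leftnullvectorper}: once the left-nullspace of $\Gamma(0)$ is known to be one-dimensional, the adjoint BVP reduces to an eigenvalue-type calculation plus a scalar normalization. Everything else is routine differentiation.
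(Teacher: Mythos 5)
Your proposal is correct and follows essentially the same route as the paper: both arguments reduce the boundary-value problem, via constancy of $\mu^\mathsf{T}(t)X(t)$ (equivalently your representation $\mu^\mathsf{T}(t)=\mu^\mathsf{T}(0)X^{-1}(t)$) and the periodicity condition, to identifying $\mu(0)$ as a left nullvector of $\Gamma(0)$, and then invoke Lemma~\ref{lem:leftnullvectorper} together with the integral condition to pin it to $w$. The only cosmetic differences are that you verify existence explicitly (the paper leaves it implicit) and that you fix the scalar multiple by the integral constraint after identifying the span, whereas the paper uses the normalized-uniqueness form of Lemma~\ref{lem:leftnullvectorper} directly.
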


\begin{proof}
Let $\mu$ denote an arbitrary solution to this boundary-value problem. It follows that $\mu^\mathsf{T}(t)X(t)\equiv w^\mathsf{T}$ and $\mu^\mathsf{T}(t)f(\gamma(s))\equiv 1$, since i) differentiation and use of \eqref{eq:varprobper} and \eqref{eq:adjtbvpper} shows that the left-hand sides are constant, ii) the integral condition implies that the second constant equals $1$ and, consequently, $\mu^\mathsf{T}(0)f(\gamma(0))=1$, iii) periodicity shows that $\mu^\mathsf{T}(0)X(1)=\mu^\mathsf{T}(0)$ and, by the proof of Lemma~\ref{lem:leftnullvectorper}, that $\mu^\mathsf{T}(0)$ is a left nullvector of $\Gamma(0)$ that iv) by Lemma~\ref{lem:leftnullvectorper} must equal $w$.
\end{proof}

\begin{corollary}
In the notation of the previous lemma,
\begin{equation}
Q(t)=I_n-f(\gamma(s))\lambda^\mathsf{T}(t)
\end{equation}
and projects along $f(\gamma(s))$ onto a co-dimension-one hyperplane orthogonal to $\lambda(t)$.
\end{corollary}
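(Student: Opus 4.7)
The plan is to compute $Q(t)$ directly from its defining formula \eqref{eq:Q(t)perdef} and then read off its projection properties. Starting from
\begin{equation}
Q(t) = X(t)\bigl(I_n - f(\gamma(0))w^\mathsf{T}\bigr)X^{-1}(t) = I_n - X(t)f(\gamma(0))w^\mathsf{T}X^{-1}(t),
\end{equation}
I would substitute two identities already established in the text: the relation $X^{-1}(t)f(\gamma(t)) \equiv f(\gamma(0))$ derived immediately after \eqref{eq:varprobper} (which rearranges to $X(t)f(\gamma(0)) = f(\gamma(t))$), and the definition $\lambda(t) := X^{-1,\mathsf{T}}(t)w$ from Lemma~\ref{lem:leftnullvectorper}, i.e.\ $w^\mathsf{T}X^{-1}(t) = \lambda^\mathsf{T}(t)$. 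Plugging both in immediately yields $Q(t) = I_n - f(\gamma(t))\lambda^\mathsf{T}(t)$ as claimed.

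To verify that this is a projection along $f(\gamma(t))$ onto a hyperplane orthogonal to $\lambda(t)$, I would next check two things. First, that $f(\gamma(t))$ lies in the kernel of $Q(t)$: using the normalization condition \eqref{eq:normcondt} from Lemma~\ref{lem:leftnullvectorper},
\begin{equation}
Q(t)f(\gamma(t)) = f(\gamma(t)) - f(\gamma(t))\,\lambda^\mathsf{T}(t)f(\gamma(t)) = f(\gamma(t)) - f(\gamma(t)) = 0.
\end{equation}
By Lemma~\ref{lem:imageandkernelper} applied at general $t$ (equivalently, by conjugating with $X(t)$), $\ker Q(t)$ is one-dimensional and therefore coincides with $\mathrm{span}\{f(\gamma(t))\}$, which identifies the direction of projection. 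Second, that $\Ima Q(t) \subseteq \{\lambda(t)\}^\perp$: for any $v\in\mathbb{R}^n$,
\begin{equation}
\lambda^\mathsf{T}(t)Q(t)v = \lambda^\mathsf{T}(t)v - \bigl(\lambda^\mathsf{T}(t)f(\gamma(t))\bigr)\lambda^\mathsf{T}(t)v = 0,
\end{equation}
again by \eqref{eq:normcondt}. Since $\lambda(t)\neq 0$, the hyperplane $\{\lambda(t)\}^\perp$ has codimension one, and the rank-nullity theorem (combined with $\dim\ker Q(t) = 1$) forces equality $\Ima Q(t) = \{\lambda(t)\}^\perp$.

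There is no real obstacle here: every ingredient has been established in the preceding lemmas, and the argument is essentially a substitution followed by two one-line verifications. The only thing to watch is the apparent typo in the statement, where $f(\gamma(s))$ should read $f(\gamma(t))$; my proof treats the dummy index accordingly so that the identity is dimensionally and geometrically consistent with the definition \eqref{eq:Q(t)perdef} of $Q(t)$.
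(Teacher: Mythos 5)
Your proof is correct and is exactly the argument the paper intends: the corollary is stated without proof because it follows by substituting $X(t)f(\gamma(0))=f(\gamma(t))$ and $w^\mathsf{T}X^{-1}(t)=\lambda^\mathsf{T}(t)$ into \eqref{eq:Q(t)perdef}, with the projection properties then immediate from Lemma~\ref{lem:imageandkernelper} and the normalization \eqref{eq:normcondt}, just as you do. You are also right that $f(\gamma(s))$ in the statement is a typo for $f(\gamma(t))$.
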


\begin{lemma}
The family of operators $P(t)=Q(t)$ is the unique periodic family of projections onto co-dimension-one hyperplanes which satisfies the invariance condition
\begin{equation}
\label{eq:QXinvariance}
X(t)P(0)=P(t)X(t)
\end{equation}
and such that $\Gamma(t)$ is a bijection on the image of $P(t)$.
\end{lemma}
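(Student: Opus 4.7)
The plan is to prove existence and uniqueness in turn. For existence, the formula $Q(t) = X(t)(I_n - f(\gamma(0))w^\mathsf{T})X^{-1}(t)$ immediately gives $Q(t)X(t) = X(t)Q(0)$, so the invariance condition holds; periodicity and the bijectivity of $\Gamma(t)$ on $\Ima Q(t)$ were already part of the hypotheses used to introduce $Q(t)$.

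For uniqueness, suppose $P(t)$ is any periodic family of projections onto co-dimension-one hyperplanes with the stated properties. Since $X(t)$ is invertible, the invariance condition can be solved as $P(t)=X(t)P(0)X^{-1}(t)$, and periodicity $P(1)=P(0)$ then forces $P(0)$ to commute with the monodromy matrix $X(1)$. In particular, $\ker P(0)$ — a one-dimensional subspace because $P(0)$ is a projection onto a co-dimension-one hyperplane — is invariant under $X(1)$ and hence is spanned by an eigenvector of $X(1)$. Bijectivity of $\Gamma(0)=X(1)-I_n$ on $\Ima P(0)$ prevents $f(\gamma(0))$ from lying in $\Ima P(0)$ (since it is a nontrivial right nullvector of $\Gamma(0)$), and under the hyperbolicity assumption that the unit eigenspace of $X(1)$ is precisely $\mathrm{span}\{f(\gamma(0))\}$ (as established in the paragraph preceding the lemma), this pins down $\ker P(0)=\mathrm{span}\{f(\gamma(0))\}$. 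Consequently $P(0)=I_n - f(\gamma(0))\tilde{w}^\mathsf{T}$ for some covector $\tilde{w}$, and idempotence of $P(0)$ — by the calculation in the proof of Lemma~\ref{lem:normalizationper} — gives the normalization $\tilde{w}^\mathsf{T}f(\gamma(0))=1$.

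To identify $\tilde{w}$ with $w$, expand the commutation $X(1)P(0)=P(0)X(1)$ using $X(1)f(\gamma(0))=f(\gamma(0))$, which collapses to $f(\gamma(0))\bigl(\tilde{w}^\mathsf{T}X(1)-\tilde{w}^\mathsf{T}\bigr)=0$. Since $f(\gamma(0))\neq 0$, the row vector $\tilde{w}^\mathsf{T}(X(1)-I_n)$ must vanish, so $\tilde{w}$ is a left nullvector of $\Gamma(0)$ with $\tilde{w}^\mathsf{T}f(\gamma(0))=1$; by Lemma~\ref{lem:leftnullvectorper}, $\tilde{w}=w$. Invariance then yields $P(t)=X(t)Q(0)X^{-1}(t)=Q(t)$, completing the proof.

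The routine steps are the algebraic manipulations; the substantive step is the identification $\ker P(0)=\mathrm{span}\{f(\gamma(0))\}$, which is where the bijectivity hypothesis and the structure of the unit eigenspace of $X(1)$ both enter. Once that one-dimensional kernel is fixed, the remaining freedom is only in the covector $\tilde{w}$, and periodicity together with Lemma~\ref{lem:leftnullvectorper} collapses it to the unique choice $w$.
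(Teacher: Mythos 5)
Your existence half is fine, but the uniqueness argument has a gap at precisely the step you flag as substantive. The deduction ``this pins down $\ker P(0)=\mathrm{span}\{f(\gamma(0))\}$'' does not follow from the three facts you cite: that $\ker P(0)$ is spanned by \emph{some} eigenvector of $X(1)$, that $f(\gamma(0))\notin\Ima P(0)$, and that the unit eigenspace of $X(1)$ equals $\mathrm{span}\{f(\gamma(0))\}$. A vector can fail to lie in $\Ima P(0)$ without lying in $\ker P(0)$, and nothing you have stated excludes $\ker P(0)$ being spanned by an eigenvector for an eigenvalue different from $1$ while $f(\gamma(0))$ has nonzero components along both $\Ima P(0)$ and $\ker P(0)$. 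Concretely, with $X(1)=\diag(1,1/2)$, $f(\gamma(0))=e_1$, $\ker P(0)=\mathrm{span}\{e_2\}$, and $\Ima P(0)=\mathrm{span}\{e_1+e_2\}$, all three cited facts hold but the conclusion fails; of course this $P(0)$ does not commute with $X(1)$, which is exactly the ingredient you derived but never use at this point (commutation makes $\Ima P(0)$, not just $\ker P(0)$, invariant under $X(1)$).

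The repair is one line and is essentially the paper's argument: from $X(1)P(0)=P(0)X(1)$ you get $\Gamma(0)P(0)=P(0)\Gamma(0)$, hence $\Gamma(0)\,P(0)f(\gamma(0))=P(0)\,\Gamma(0)f(\gamma(0))=0$ with $P(0)f(\gamma(0))\in\Ima P(0)$, and injectivity of $\Gamma(0)$ on $\Ima P(0)$ forces $P(0)f(\gamma(0))=0$, i.e.\ $f(\gamma(0))\in\ker P(0)$; note the unit-eigenspace fact is not even needed for this step. (The paper phrases this by writing $P(t)=I_n-\phi(t)\mu^\mathsf{T}(t)$, deriving $\Gamma(0)P(0)=\Gamma(0)=P(0)\Gamma(0)$ from periodicity and invariance, concluding $P(0)f(\gamma(0))=0$ by injectivity, and then identifying $\mu(0)=w$ via surjectivity as in Lemma~\ref{lem:leftnullvectorper}.) With this correction, the rest of your argument---writing $P(0)=I_n-f(\gamma(0))\tilde{w}^\mathsf{T}$, obtaining $\tilde{w}^\mathsf{T}f(\gamma(0))=1$ from idempotence as in Lemma~\ref{lem:normalizationper}, deducing $\tilde{w}^\mathsf{T}\Gamma(0)=0$ from the commutation, and invoking the uniqueness statement of Lemma~\ref{lem:leftnullvectorper} to conclude $\tilde{w}=w$ and hence $P(t)=X(t)Q(0)X^{-1}(t)=Q(t)$---is correct and runs parallel to the paper's proof.
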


\begin{proof}
For any such family of projections, there exist two periodic functions $\phi$ and $\mu$, such that $P(t)=I_n-\phi(t)\mu^\mathsf{T}(t)$, $\mu^\mathsf{T}(t)\phi(t)\equiv 1$ and, by invariance, $\phi(t)=X(t)\phi(0)$ and $\mu(t)=X^{-1,\mathsf{T}}(t)\mu(0)$. By direct algebraic manipulation and periodicity, we obtain
\begin{equation}
    \Gamma(0)P(0)=\Gamma(0)=P(0)\Gamma(0).
\end{equation}
It follows from the first equality that $P(0)f(\gamma(0))$ is a nullvector of $\Gamma(0)$ and, by injectivity of $\Gamma(0)$ on $\Ima P(0)$, that  $P(0)f(\gamma(0))=0$. Without loss of generality, we conclude that $\phi(0)=f(\gamma(0))$. From the second equality, we obtain $\mu^T(0)\Gamma(0)=0$. By surjectivity of $\Gamma(0)$ on $\Ima P(0)$, as in the proof of Lemma~\ref{lem:leftnullvectorper}, then $\mu(0)=w$.
\end{proof}

\begin{corollary}
The stable and unstable eigenspaces of $\Gamma(t)+I_n$ lie in the image of $Q(t)$.
\end{corollary}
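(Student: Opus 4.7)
The plan is to reduce the claim to a linear-algebra fact about the monodromy matrix $X(1)$, exploiting the identity
\[
\Gamma(t)+I_n = X(t)X(1)X^{-1}(t),
\]
which drops out of \eqref{eq:Gamma(t)def} after one line of algebra. This conjugation sends every generalized eigenvector of $X(1)$ with eigenvalue $\mu$ to a generalized eigenvector of $\Gamma(t)+I_n$ with the same eigenvalue via $\tilde v\mapsto X(t)\tilde v$, and sets up a bijection between the corresponding generalized eigenspaces. In particular, it carries the stable (resp.\ unstable) eigenspace of $X(1)$ to that of $\Gamma(t)+I_n$. Combined with the reformulation $\Ima Q(t)=X(t)\{w\}^\perp$ of Lemma~\ref{lem:imageandkernelper} (via \eqref{eq:Q(t)perdef}), the problem reduces to showing that every generalized eigenvector of $X(1)$ associated with an eigenvalue $\mu\neq 1$ lies in $\{w\}^\perp$.

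For this reduced claim, I would start from the identity $w^\mathsf{T}X(1)=w^\mathsf{T}$ already extracted in the proof of Lemma~\ref{lem:leftnullvectorper}, so that $w$ is a left eigenvector of $X(1)$ with eigenvalue $1$. A short induction on $k$ then yields
\[
w^\mathsf{T}\bigl(X(1)-\mu I_n\bigr)^k = (1-\mu)^k\, w^\mathsf{T}.
\]
Applied to any $\tilde v$ annihilated by $(X(1)-\mu I_n)^k$, this forces $(1-\mu)^k w^\mathsf{T}\tilde v = 0$, and $w^\mathsf{T}\tilde v = 0$ follows whenever $\mu \neq 1$.

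Stitching the two steps together, the stable and unstable eigenspaces of $X(1)$ are contained in $\{w\}^\perp$, and hence their images under $X(t)$---which are the stable and unstable eigenspaces of $\Gamma(t)+I_n$---lie in $X(t)\{w\}^\perp = \Ima Q(t)$, as claimed. I do not expect any real obstacle here; the one point warranting care is that in the presence of nontrivial Jordan blocks of $X(1)$ the stable/unstable eigenspaces must be interpreted as generalized eigenspaces, which is exactly what the algebraic identity displayed above is designed to handle uniformly.
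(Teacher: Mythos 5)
Your argument is correct: the conjugation identity $\Gamma(t)+I_n=X(t)X(1)X^{-1}(t)$, the left-eigenvector relation $w^\mathsf{T}X(1)=w^\mathsf{T}$, and the resulting orthogonality of the generalized eigenspaces for eigenvalues $\mu\neq 1$ to $w$ give exactly the containment in $\Ima Q(t)=X(t)\{w\}^\perp$. The paper states this corollary without proof, relying on the earlier remark that $Q(t)$ and $I_n-Q(t)$ respect the spectral decomposition of $\Gamma(t)+I_n$; your proposal is a correct and complete fleshing-out of precisely that implicit argument, including the proper handling of Jordan blocks.
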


We proceed to investigate the local geometry on a neighborhood of $\gamma$ characterized in terms of the projections $Q$. These provide a local decomposition into state-space displacements tangential and transversal to the periodic orbit. When convenient, we perform formal calculations with differentials in order to capture leading-order effects.

\begin{lemma}
\label{eq:decompper}
For $x$ sufficiently close to $\gamma(t)$ for some $t$, there exists a unique time $\tau$ (with $\tau\approx t$) such that
\begin{equation}
x=\gamma(\tau)+x_\mathrm{tr},
\end{equation}
where $\lambda(\tau)^\mathsf{T}x_\mathrm{tr}=0$ and, consequently, $Q(\tau)x_\mathrm{tr}=x_\mathrm{tr}$.
\end{lemma}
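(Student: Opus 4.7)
The plan is to produce the time $\tau$ as a local zero of a scalar transversality equation via the implicit function theorem, and then read off the remaining claims from the explicit form of the projection $Q$.

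First I would introduce the smooth scalar map
\begin{equation}
g(\tau,x) := \lambda^\mathsf{T}(\tau)\bigl(x-\gamma(\tau)\bigr)
\end{equation}
on a neighborhood of $(t,\gamma(t))$ in $\mathbb{R}\times\mathbb{R}^n$, using the smoothness and $1$-periodicity of $\gamma$ and $\lambda$. Clearly $g(t,\gamma(t))=0$, so the task is to invert $g(\cdot,x)=0$ locally in $\tau$.

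The central step is the transversality check. Differentiating in $\tau$ at $(t,\gamma(t))$, the contribution $\dot{\lambda}^\mathsf{T}(\tau)\bigl(x-\gamma(\tau)\bigr)$ vanishes, while the remaining term $-\lambda^\mathsf{T}(\tau)\dot{\gamma}(\tau)$ reduces, by \eqref{eq:determ} and the normalization \eqref{eq:normcondt} established in Lemma~\ref{lem:leftnullvectorper}, to $-T\neq 0$. The implicit function theorem then supplies a neighborhood $U$ of $\gamma(t)$ and a unique smooth function $\tau\colon U\to\mathbb{R}$ with $\tau(\gamma(t))=t$ and $g(\tau(x),x)=0$ on $U$. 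Setting $x_{\mathrm{tr}}:=x-\gamma(\tau(x))$ then yields the required decomposition, and continuity of $\tau$ ensures $\tau\approx t$ whenever $x$ is sufficiently close to $\gamma(t)$.

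For the final assertion I would invoke the explicit form
\begin{equation}
Q(\tau)=I_n-f(\gamma(\tau))\lambda^\mathsf{T}(\tau)
\end{equation}
recorded in the corollary to Lemma~\ref{lem:adjbvpper}, from which $Q(\tau)x_{\mathrm{tr}}=x_{\mathrm{tr}}-f(\gamma(\tau))\cdot 0=x_{\mathrm{tr}}$ is immediate. There is no substantive obstacle; the only point requiring care is recognizing that the transversality hypothesis of the implicit function theorem is exactly the normalization $\lambda^\mathsf{T}(t)f(\gamma(t))=1$, which has been engineered into the construction of $\lambda$ for precisely this purpose.
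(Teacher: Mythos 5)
Your proposal is correct and follows essentially the same route as the paper: both apply the implicit function theorem to the scalar map $\lambda^\mathsf{T}(\tau)\bigl(x-\gamma(\tau)\bigr)$, using the normalization $\lambda^\mathsf{T}(t)f(\gamma(t))=1$ to get the nonzero $\tau$-derivative $-T$. Your additional spelling out of $Q(\tau)x_\mathrm{tr}=x_\mathrm{tr}$ from $Q(\tau)=I_n-f(\gamma(\tau))\lambda^\mathsf{T}(\tau)$ is exactly what the paper leaves implicit.
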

\begin{proof}
Let $H(x,\tau)=\lambda^\mathsf{T}(\tau)\left(x-\gamma(\tau)\right)$. It follows that $H(\gamma(t),t)=0$ and
\begin{equation}
\partial_\tau H(\gamma(t),t)=-T\lambda^\mathsf{T}(t)f(\gamma(s))=-T.
\end{equation}
The claim follows from the implicit function theorem.
\end{proof}

\begin{corollary}
\label{cor:transdynper}
For the differential $\mathrm{d}x$, the result of Lemma~\ref{eq:decompper} implies the existence of differentials $\mathrm{d}\tau$ and $\mathrm{d}x_\mathrm{tr}$, such that 
\begin{equation}
\label{eq:x+dx}
x+\mathrm{d}x=\gamma(\tau+\mathrm{d}\tau)+x_\mathrm{tr}+\mathrm{d}x_\mathrm{tr},
\end{equation}
where $\lambda^\mathsf{T}(\tau+\mathrm{d}\tau)(x_\mathrm{tr}+\mathrm{d}x_\mathrm{tr})=0$ and, consequently,
\begin{equation}
\label{eq:Q(tau+dtau)}
Q(\tau+\mathrm{d}\tau)(x_\mathrm{tr}+\mathrm{d}x_\mathrm{tr})=x_\mathrm{tr}+\mathrm{d}x_\mathrm{tr}.
\end{equation}
It follows that
\begin{equation}
\label{eq:dx_tr in Q}
\mathrm{d}x_\mathrm{tr}-T\mathrm{D} f(\gamma(\tau))x_\mathrm{tr}\mathrm{d}\tau=Q(\tau)\left(\mathrm{d}x-T\mathrm{D} f(\gamma(\tau))x_\mathrm{tr}\mathrm{d}\tau\right).
\end{equation}
\end{corollary}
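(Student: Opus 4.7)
The plan is to handle the corollary in three pieces: the existence of the differentials together with the transversality condition, the projection identity \eqref{eq:Q(tau+dtau)}, and finally the main formula \eqref{eq:dx_tr in Q}.

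For the first two pieces, I would apply Lemma~\ref{eq:decompper} to the perturbed point $x + \mathrm{d}x$, which is still close to the periodic orbit when $\mathrm{d}x$ is small. The lemma supplies a unique time $\tau' = \tau + \mathrm{d}\tau$ and a transversal displacement $x_{\mathrm{tr}}' = x_{\mathrm{tr}} + \mathrm{d}x_{\mathrm{tr}}$ with $\lambda^\mathsf{T}(\tau')x_{\mathrm{tr}}' = 0$, giving \eqref{eq:x+dx}. The identity \eqref{eq:Q(tau+dtau)} is then immediate from Lemma~\ref{lem:imageandkernelper}, since $\Ima Q(\tau') = \{\lambda(\tau')\}^\perp$ and hence $Q(\tau')$ acts as the identity on any vector annihilated by $\lambda(\tau')$.

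For the main identity \eqref{eq:dx_tr in Q}, I would linearize the two defining relations $x = \gamma(\tau) + x_{\mathrm{tr}}$ and $\lambda^\mathsf{T}(\tau)x_{\mathrm{tr}} = 0$ to first order. Differentiating the first and using $\dot{\gamma}(\tau) = Tf(\gamma(\tau))$ yields
\[
\mathrm{d}x_{\mathrm{tr}} = \mathrm{d}x - Tf(\gamma(\tau))\,\mathrm{d}\tau.
\]
Differentiating the transversality condition and invoking the adjoint equation $\dot{\lambda}^\mathsf{T} = -T\lambda^\mathsf{T}\mathrm{D}f(\gamma)$ from Lemma~\ref{lem:adjbvpper} gives
\[
\lambda^\mathsf{T}(\tau)\bigl(\mathrm{d}x_{\mathrm{tr}} - T\mathrm{D}f(\gamma(\tau))x_{\mathrm{tr}}\,\mathrm{d}\tau\bigr) = 0,
\]
so that $\mathrm{d}x_{\mathrm{tr}} - T\mathrm{D}f(\gamma(\tau))x_{\mathrm{tr}}\,\mathrm{d}\tau$ lies in $\{\lambda(\tau)\}^\perp = \Ima Q(\tau)$ and is therefore fixed by $Q(\tau)$.

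To arrive at the stated form, I would then rewrite $Q(\tau)\mathrm{d}x_{\mathrm{tr}}$ as $Q(\tau)\mathrm{d}x$, using $Q(\tau)f(\gamma(\tau)) = 0$ (which holds because $f(\gamma(\tau)) \in \ker Q(\tau)$ by Lemma~\ref{lem:imageandkernelper}) to absorb the $Tf(\gamma(\tau))\,\mathrm{d}\tau$ term. Pulling $Q(\tau)$ outside yields \eqref{eq:dx_tr in Q}. I do not anticipate any serious obstacle; the one subtle observation, and the engine of the argument, is that the differentiated transversality condition together with the adjoint ODE place $\mathrm{d}x_{\mathrm{tr}} - T\mathrm{D}f(\gamma(\tau))x_{\mathrm{tr}}\,\mathrm{d}\tau$ exactly in $\Ima Q(\tau)$, rather than only up to a tangential correction, which is what allows the identity to be written cleanly in terms of the projection applied to $\mathrm{d}x$.
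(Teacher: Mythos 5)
Your proof is correct and follows essentially the same route as the paper: both linearize the perturbed transversality constraint, use the relation $\mathrm{d}x_\mathrm{tr}=\mathrm{d}x-Tf(\gamma(\tau))\mathrm{d}\tau$, and invoke $Q(\tau)f(\gamma(\tau))=0$ to reach \eqref{eq:dx_tr in Q}. The only (cosmetic) difference is that you differentiate the scalar condition $\lambda^\mathsf{T}(\tau)x_\mathrm{tr}=0$ and use $\Ima Q(\tau)=\{\lambda(\tau)\}^\perp$, whereas the paper differentiates the projection identity \eqref{eq:Q(tau+dtau)} and computes $\dot{Q}(\tau)x_\mathrm{tr}$ explicitly; since $Q=I_n-f(\gamma)\lambda^\mathsf{T}$, these are the same calculation, with your version merely packaging the adjoint ODE directly rather than through $\dot{Q}$.
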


\begin{proof}
Eq.~\eqref{eq:x+dx} implies that
\begin{equation}
\label{eq:dx_tr1}
\mathrm{d}x_\mathrm{tr}=\mathrm{d}x-Tf(\gamma(\tau))\mathrm{d}\tau.
\end{equation}
Similarly, from \eqref{eq:Q(tau+dtau)}, we obtain
\begin{equation}
\label{eq:dx_tr2}
\mathrm{d}x_\mathrm{tr}=\dot{Q}(\tau)x_\mathrm{tr}\mathrm{d}\tau+Q(\tau)\mathrm{d}x_\mathrm{tr}.
\end{equation}
From the definition of $Q$ it follows that
\begin{equation}
\dot{Q}(\tau)=-T\mathrm{D} f(\gamma(\tau)) f(\gamma(\tau))\lambda^\mathsf{T}(\tau)+Tf(\gamma(\tau))\lambda^\mathsf{T}(\tau)\mathrm{D} f(\gamma(\tau))
\end{equation}
and, consequently, that
\begin{equation}
\label{eq:Qdotx_tr}
\dot{Q}(\tau)x_\mathrm{tr}=T(I_n-Q(\tau))\mathrm{D} f(\gamma(\tau))x_\mathrm{tr},
\end{equation}
since $\lambda^\mathsf{T}(\tau)x_\mathrm{tr}=0$. Substitution of \eqref{eq:dx_tr1} and \eqref{eq:Qdotx_tr} into the right-hand side of \eqref{eq:dx_tr2} yields \eqref{eq:dx_tr in Q}, since $Q(\tau)f(\gamma(\tau))=0$.
\end{proof}

\subsubsection{The covariance boundary-value problem}
In lieu of the deterministic dynamics in \eqref{eq:determ}, now consider the It\^{o} SDE
\begin{equation}
\label{eq:sde_first}
\mathrm{d}x=Tf(x)\mathrm{d}t+\sigma \sqrt{T}F(x)\mathrm{d}W_{t}
\end{equation} 
in terms of the noise intensity $\sigma$ and a vector $W_{t}\in\mathbb{R}^m$ of independent standard Brownian motions. We consider the limit of small noise intensity and are concerned only with leading-order expansions in $\sigma$. In the notation of the preceding lemmas, we continue to calculate with differentials. Since we are only interested in first-order terms in $\sigma$, we drop any It\^o corrections arising from the It\^o-formula. The following proposition is then relatively straightforward.

\begin{proposition}
For $x$ sufficiently close to $\gamma(t)$ for some $t$, to first order in $\|x_\mathrm{tr}\|$ and $\sigma$,
\begin{equation}
\label{eq:OUprocess}
\mathrm{d}x_\mathrm{tr}=T\mathrm{D} f(\gamma(\tau))x_\mathrm{tr}\mathrm{d}\tau+\sigma \sqrt{T}Q(\tau)F(\gamma(\tau))\mathrm{d}W_{\tau}.
\end{equation}
\end{proposition}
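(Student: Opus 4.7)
The plan is to substitute the SDE~\eqref{eq:sde_first} directly into the decomposition identity~\eqref{eq:dx_tr in Q} from Corollary~\ref{cor:transdynper}, which is purely geometric and therefore holds verbatim with $\mathrm{d}x$ replaced by the right-hand side of the SDE. That is, I would begin with
\begin{equation}
\mathrm{d}x_\mathrm{tr}-T\mathrm{D} f(\gamma(\tau))x_\mathrm{tr}\mathrm{d}\tau=Q(\tau)\bigl(Tf(x)\mathrm{d}t+\sigma\sqrt{T}F(x)\mathrm{d}W_t-T\mathrm{D} f(\gamma(\tau))x_\mathrm{tr}\mathrm{d}\tau\bigr),
\end{equation}
and then expand each term to first order in $\|x_\mathrm{tr}\|$ and $\sigma$.

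Next I would use the Taylor expansion $f(x)=f(\gamma(\tau))+\mathrm{D} f(\gamma(\tau))x_\mathrm{tr}+O(\|x_\mathrm{tr}\|^2)$ together with the key identity $Q(\tau)f(\gamma(\tau))=0$ established in Corollary~2.7, so that $Q(\tau)Tf(x)\mathrm{d}t=TQ(\tau)\mathrm{D} f(\gamma(\tau))x_\mathrm{tr}\mathrm{d}t+O(\|x_\mathrm{tr}\|^2)$. For the noise term, since it already carries a factor of $\sigma$, it is enough to replace $F(x)$ by $F(\gamma(\tau))$, the remainder being $O(\sigma\|x_\mathrm{tr}\|)$ which is higher order under the joint expansion.

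Collecting terms gives
\begin{equation}
\mathrm{d}x_\mathrm{tr}=T\mathrm{D} f(\gamma(\tau))x_\mathrm{tr}\mathrm{d}\tau + TQ(\tau)\mathrm{D} f(\gamma(\tau))x_\mathrm{tr}(\mathrm{d}t-\mathrm{d}\tau)+\sigma\sqrt{T}Q(\tau)F(\gamma(\tau))\mathrm{d}W_t,
\end{equation}
so the remaining step is to dispose of the middle term and to identify $\mathrm{d}W_t$ with $\mathrm{d}W_\tau$. The former follows because, from Lemma~\ref{eq:decompper}, the time reparametrization satisfies $\tau-t=O(\|x_\mathrm{tr}\|)+O(\sigma)$; hence $\mathrm{d}t-\mathrm{d}\tau$ contributes a term of size $\|x_\mathrm{tr}\|\cdot\mathrm{d}(\tau-t)$ which is quadratic in the small quantities and dropped by the stated first-order convention, consistent with the paper's suppression of It\^o corrections.

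The main obstacle is precisely this reparametrization step: justifying that working entirely with respect to the stochastic time $\tau$ (both in the drift and in the Brownian increment) introduces only higher-order errors. This would need to be argued by differentiating the defining constraint $\lambda^\mathsf{T}(\tau)(x-\gamma(\tau))=0$ from Lemma~\ref{eq:decompper} along the SDE and noting that $\mathrm{d}\tau-\mathrm{d}t$ is itself of order $\sigma+\|x_\mathrm{tr}\|$, so that any product of $\mathrm{d}\tau-\mathrm{d}t$ with $x_\mathrm{tr}$ or with the stochastic increment lies beyond the first-order truncation; the identification $\mathrm{d}W_t\approx\mathrm{d}W_\tau$ holds in the same sense. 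Once this is accepted, the stated SDE follows immediately.
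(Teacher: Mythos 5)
Your proposal is correct and follows essentially the same route as the paper: substitute the SDE into the projected identity \eqref{eq:dx_tr in Q}, Taylor-expand $f$ about $\gamma(\tau)$, use $Q(\tau)f(\gamma(\tau))=0$, and discard the $\mathrm{d}t-\mathrm{d}\tau$ and $\mathrm{d}W_t$-versus-$\mathrm{d}W_\tau$ discrepancies as higher order, exactly as the paper does by first noting $\mathrm{d}\tau=\mathrm{d}t$ to zeroth order in $\|x_\mathrm{tr}\|$ and $\sigma$. Your explicit tracking of the residual term $TQ(\tau)\mathrm{D}f(\gamma(\tau))x_\mathrm{tr}(\mathrm{d}t-\mathrm{d}\tau)$ is just a reordering of the paper's steps, at the same formal level of calculation with differentials.
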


\begin{proof}
For $x_\mathrm{tr}=0$ and $\sigma=0$, we have $\tau=t$ and $\mathrm{d}x_\mathrm{tr}=0$. In this limit, \eqref{eq:sde_first} and \eqref{eq:dx_tr1} imply that
\begin{equation}
0=Tf\left(\gamma(\tau)\right)(\mathrm{d}t-\mathrm{d}\tau),
\end{equation}
from which it follows that $\mathrm{d}\tau=\mathrm{d}t$ to zeroth order in $\|x_\mathrm{tr}\|$ and $\sigma$. To first order in $\|x_\mathrm{tr}\|$ and $\sigma$, \eqref{eq:sde_first} then yields
\begin{equation}
Q(\tau)\left(\mathrm{d}x-T\mathrm{D}f(\gamma(\tau))x_\mathrm{tr}\mathrm{d}\tau\right)=\sigma\sqrt{T}Q(\tau) F(\gamma(\tau))\mathrm{d}W_\tau
\end{equation}
and substitution in \eqref{eq:dx_tr in Q} yields the claimed result.
\end{proof}

The Ornstein-Uhlenbeck process in \eqref{eq:OUprocess} is a leading-order approximation of the local dynamics near $\gamma$ that neglects the cumulative effects of higher-order terms in $\|x_\mathrm{tr}\|$ and $\sigma$. Given an initial condition $x_\mathrm{tr}(0)$, \eqref{eq:OUprocess} may be solved explicitly to yield
\begin{align}
x_\mathrm{tr}(\tau)&=X(\tau)x_\mathrm{tr}(0)+\sigma\sqrt{T}X(\tau)\int_{0}^{\tau}X^{-1}(s)Q(s)F(\gamma(s)))\mathrm{d}W_{s}\nonumber\\
    &=X(\tau)x_\mathrm{tr}(0)+\sigma\sqrt{T}Q(\tau)X(\tau)\int_{0}^{\tau}G(s)\mathrm{d}W_{s},
\end{align}
where $G(s)=X^{-1}(s)F(\gamma(s))$ and the invariance condition \eqref{eq:QXinvariance} with $P(t)=Q(t)$ was used twice. It follows that $\lambda^\mathsf{T}x_\mathrm{tr}\equiv 0$ provided that $w^\mathsf{T}x_\mathrm{tr}(0)=0$. The rescaled covariance matrix
\begin{equation}
\label{eq:cov_def}
    C:=\frac{1}{\sigma^2}\mathbb{E}\left[ x_\mathrm{tr}x_\mathrm{tr}^{\mathrm{T}}\right]
\end{equation}
is then given by
\begin{align}
\label{eq:Ctr_integral}
C(\tau)&=X(\tau)C(0)X^\mathsf{T}(\tau)+ TQ(\tau)X(\tau)\left(\int_{0}^{\tau}G(s)G^\mathsf{T}(s)\mathrm{d}s\right)X^\mathsf{T}(\tau)Q^\mathsf{T}(\tau),
\end{align}
where we used the stochastic equalities~\cite{higham2021introduction}
\begin{equation}
\label{stoch_equality_1}
    \mathbb{E}\left[\int_0^\tau Y(s)\mathrm{d}W_s\right]=0
\end{equation}
and
\begin{equation}
\label{stoch_equality_2}
    \mathbb{E}\left[\int_0^\tau Y(s)\mathrm{d}W_s\int_0^\tau Z^\mathsf{T}(s)\mathrm{d}W_s\right]=\int_0^\tau Y(s)Z^\mathsf{T}(s)\mathrm{d}s.
\end{equation}
It follows that $\lambda^\mathsf{T}C\lambda\equiv w^\mathsf{T}C(0)w$ and that $\lambda^\mathsf{T}C\lambda\equiv0$ if and only if $w^\mathsf{T}C(0)w=0$.

\begin{lemma}
Let 
\begin{equation}
\mathcal{I}=\int_0^1G(s)G^\mathsf{T}(s)\mathrm{d}s.
\end{equation}
Then, for arbitrary $C(0)$ and $k\in\mathbb{N}\cup\{0\}$,
\begin{equation}
C(k+1)=X(1)C(k)X^\mathsf{T}(1)+TQ(0)X(1)\mathcal{I}X^\mathsf{T}(1)Q^\mathsf{T}(0).
\end{equation}
\end{lemma}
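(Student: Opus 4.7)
The plan is to apply the explicit formula
\[
C(\tau)=X(\tau)C(0)X^\mathsf{T}(\tau)+TQ(\tau)X(\tau)\left(\int_0^\tau G(s)G^\mathsf{T}(s)\,\mathrm{d}s\right)X^\mathsf{T}(\tau)Q^\mathsf{T}(\tau)
\]
at $\tau=k+1$ and at $\tau=k$, and to show that the two expressions differ by exactly $X(1)\,\cdot\,X^\mathsf{T}(1)$-conjugation together with the claimed additive term. The identities I will rely on are all consequences of $1$-periodicity: (i) $X(t+1)=X(t)X(1)$, which iterated gives $X(k)=X(1)^k$ and $X(k+1)=X(1)X(k)=X(k)X(1)$; (ii) $Q(k)=Q(k+1)=Q(0)$; and (iii) the invariance condition $X(t)Q(0)=Q(t)X(t)$, which at $t=1$ together with $Q(1)=Q(0)$ shows that $X(1)$ commutes with $Q(0)$.

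First I would split the integral as
\[
\int_0^{k+1} G(s)G^\mathsf{T}(s)\,\mathrm{d}s=\int_0^k G(s)G^\mathsf{T}(s)\,\mathrm{d}s+\int_k^{k+1} G(s)G^\mathsf{T}(s)\,\mathrm{d}s.
\]
For the trailing piece I substitute $s=s'+k$. Using $1$-periodicity of $\gamma$ and $X(s'+k)=X(s')X(1)^k$, one gets $G(s'+k)=X(1)^{-k}G(s')$, and hence
\[
\int_k^{k+1} G(s)G^\mathsf{T}(s)\,\mathrm{d}s=X^{-1}(k)\,\mathcal{I}\,X^{-1,\mathsf{T}}(k).
\]
Conjugating by $X(k+1)$ and $X^\mathsf{T}(k+1)$ and using $X(k+1)X^{-1}(k)=X(1)$ converts the trailing contribution to $X(1)\mathcal{I}X^\mathsf{T}(1)$. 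Multiplying on the left by $TQ(k+1)=TQ(0)$ and on the right by $Q^\mathsf{T}(k+1)=Q^\mathsf{T}(0)$ then produces the additive term $TQ(0)X(1)\mathcal{I}X^\mathsf{T}(1)Q^\mathsf{T}(0)$ claimed in the lemma.

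Next, to identify the remaining portion with $X(1)C(k)X^\mathsf{T}(1)$, I would factor $X(k+1)=X(1)X(k)$ out of both the deterministic term $X(k+1)C(0)X^\mathsf{T}(k+1)$ and the integral term involving $\int_0^k$, pushing $X(1)$ through $Q(k+1)=Q(0)$ by the commutation relation $X(1)Q(0)=Q(0)X(1)$, and then using invariance again, $Q(0)X(k)=X(k)Q(0)=Q(k)X(k)$, to re-express the bracketed expression in the exact form of $C(k)$. Summing the two contributions yields the stated recursion.

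The one place that requires real care is the interplay between the periodicity of $Q$ and the invariance condition, and the resulting commutation of $X(1)$ and $Q(0)$, because it is what allows $X(1)$ and $X^\mathsf{T}(1)$ to be pulled cleanly outside the projectors so that the bracketed quantity is precisely $C(k)$ rather than some non-factorizable hybrid. With that observation in hand, the rest is bookkeeping with the semigroup property of $X$ and the periodicity of $G$.
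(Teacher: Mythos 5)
Your proposal is correct and follows essentially the same route as the paper: evaluate the explicit solution formula for $C(\tau)$ at integer times, translate $\int_k^{k+1}G(s)G^\mathsf{T}(s)\,\mathrm{d}s$ back to $\mathcal{I}$ via the definition of $G$ and the semigroup/periodicity relations, and substitute. The only difference is that you make explicit the commutation $X(1)Q(0)=Q(0)X(1)$ (from the invariance condition and $Q(1)=Q(0)$) that the paper leaves implicit in its ``the claim follows by substitution'' step.
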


\begin{proof}
By \eqref{eq:Ctr_integral},
\begin{equation}
C(k)=X(k)C(0)X^\mathsf{T}(k)+TQ(0)X(k)\left(\int_0^{k}G(s)G^\mathsf{T}(s)\mathrm{d}s\right)X^\mathsf{T}(k)Q^\mathsf{T}(0).
\end{equation}
Moreover, by the definition of $G$, we have
\begin{equation}
\mathcal{I}=X(k)\left(\int_k^{k+1}G(s)G^\mathsf{T}(s)\mathrm{d}s\right)X^\mathsf{T}(k).
\end{equation}
The claim follows by substitution.
\end{proof}

We proceed to assume that the unstable eigenspace of $X(1)$ is empty, such that
\begin{equation}
X(k)=f(\gamma(0))w^\mathsf{T}+\mathcal{O}\left(\mathrm{exp}(-k/\tau_{\mathrm{tr}})\right)
\end{equation}
for some positive constant $\tau_\mathrm{tr}$ and large $k$ (here, the asymptotics is with respect to large times so that the $\mathcal{O}(\cdot)$-term becomes negligible as k becomes large). 

\begin{proposition}
\label{prop: perorbseries}
Given a non-negative constant $b$, there exists an initial condition $C(0)$ with $w^\mathsf{T}C(0)w=b$, such that $C(k)=C(0)$ for all $k$.
\end{proposition}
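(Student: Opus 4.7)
My plan is to construct $C(0)$ explicitly as a convergent series. By the preceding lemma, requiring $C(k) = C(0)$ for all $k$ is equivalent to the discrete Lyapunov fixed-point condition
$$C(0) = X(1) C(0) X^\mathsf{T}(1) + A, \quad A := T Q(0) X(1) \mathcal{I} X^\mathsf{T}(1) Q^\mathsf{T}(0).$$
Because $X(1)$ has the neutral eigenvalue $1$ with eigenvector $f(\gamma(0))$, the fixed point is not unique: the homogeneous family $C \mapsto C + c\, f(\gamma(0)) f^\mathsf{T}(\gamma(0))$ parametrizes a one-dimensional ambiguity which is precisely what the scalar constraint $w^\mathsf{T} C(0) w = b$ pins down.

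Concretely, I would take
$$C(0) := b\, f(\gamma(0)) f^\mathsf{T}(\gamma(0)) + \sum_{j=0}^{\infty} X^j(1)\, A\, (X^\mathsf{T})^j(1).$$
Convergence rests on three ingredients already established in the excerpt: the identity $X^j(1) = X(j)$ (inductive consequence of $X(t+1) = X(t) X(1)$ and $X(0) = I_n$); the invariance $X(1) Q(0) = Q(0) X(1)$ from the penultimate lemma together with $1$-periodicity of $Q$; and the asymptotic decomposition $X(j) = f(\gamma(0)) w^\mathsf{T} + \mathcal{O}(e^{-j/\tau_\mathrm{tr}})$. Combined with $Q(0) f(\gamma(0)) = 0$ from Lemma~\ref{lem:imageandkernelper}, these yield $\|Q(0) X(j)\| = \mathcal{O}(e^{-j/\tau_\mathrm{tr}})$, so that each summand, rewritten as $Q(0) X(j) \cdot T X(1) \mathcal{I} X^\mathsf{T}(1) \cdot X^\mathsf{T}(j) Q^\mathsf{T}(0)$, has norm $\mathcal{O}(e^{-2j/\tau_\mathrm{tr}})$ and the series converges absolutely.

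Verifying the fixed-point equation is then a shift-of-index computation: $X(1) f(\gamma(0)) f^\mathsf{T}(\gamma(0)) X^\mathsf{T}(1) = f(\gamma(0)) f^\mathsf{T}(\gamma(0))$ preserves the tangential term, while reindexing $j \to j+1$ converts $X(1) \sum_{j \geq 0} X^j(1) A (X^\mathsf{T})^j(1) X^\mathsf{T}(1) + A$ back into $\sum_{j \geq 0} X^j(1) A (X^\mathsf{T})^j(1)$. For the normalization, the argument in the proof of Lemma~\ref{lem:leftnullvectorper} gives $w^\mathsf{T} X(1) = w^\mathsf{T}$, and Lemma~\ref{lem:normalizationper} yields $w^\mathsf{T} Q(0) = w^\mathsf{T} - (w^\mathsf{T} f(\gamma(0))) w^\mathsf{T} = 0$; hence $w^\mathsf{T} A = 0$, every term in the series contributes zero to $w^\mathsf{T} C(0) w$, and only the tangential term survives, giving $b (w^\mathsf{T} f(\gamma(0)))^2 = b$. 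The main subtlety is the neutral eigenvalue at $1$, which threatens summability; it is defused by the observation that the source $A$ is confined to $\Ima Q(0)$, on which the invariance $X(1) Q(0) = Q(0) X(1)$ together with the absence of an unstable eigenspace make $X(1)$ a strict contraction.
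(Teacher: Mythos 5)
Your proof is correct and follows essentially the same route as the paper: you construct the same $C(0)$ (your series $\sum_{j\ge 0}X^j(1)A(X^\mathsf{T})^j(1)$ coincides term-by-term with the paper's $T\sum_{n\ge 1}Q(0)X(n)\mathcal{I}X^\mathsf{T}(n)Q^\mathsf{T}(0)$ once you use $X(j)Q(0)=Q(0)X(j)$ and $X(j)X(1)=X(j+1)$), establish convergence from $Q(0)f(\gamma(0))=0$ and the decay of $X(k)$, and verify the fixed-point relation by an index shift, with the normalization $w^\mathsf{T}C(0)w=b$ handled exactly as in the paper via $w^\mathsf{T}Q(0)=0$ and $w^\mathsf{T}X(1)=w^\mathsf{T}$.
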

\begin{proof}
The previous lemma shows that $C(1)=C(0)\Rightarrow C(k)=C(0)$ for all $k$. Suppose that
\begin{align}
\label{eq:C0explicit}
C(0)&=b f(\gamma(0))f^\mathsf{T}(\gamma(0))+T\sum_{n=1}^\infty Q(0)X(n)\mathcal{I}X^\mathsf{T}(n)Q^\mathsf{T}(0).
\end{align}
The series converges by the assumption on $X(1)$. By substitution and use of periodicity and the invariance condition
\begin{equation}
X(1)C(0)X^\mathsf{T}(1)=T\sum_{n=2}^\infty Q(0)X(n)\mathcal{I}X^\mathsf{T}(n)Q^\mathsf{T}(0)
\end{equation}
and the claim follows by evaluation of \eqref{eq:Ctr_integral} at $\tau=1$.
\end{proof}
\begin{corollary}
The function $C(\tau)$ in \eqref{eq:Ctr_integral} with $w^\mathsf{T}C(0)w=b$ converges to the periodic function obtained with $C(0)$ given by \eqref{eq:C0explicit} as $\tau\rightarrow\infty$.
\end{corollary}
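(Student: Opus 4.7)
The plan is to define $C^*(\tau)$ as the periodic covariance obtained from the initial condition $C^*(0)$ in \eqref{eq:C0explicit}, and then to show that for any other initial condition $C(0)$ with $w^\mathsf{T}C(0)w=b$ the difference $C(\tau)-C^*(\tau)$ decays exponentially as $\tau\to\infty$. Because both $C(\tau)$ and $C^*(\tau)$ satisfy \eqref{eq:Ctr_integral} with the same inhomogeneous term, their difference satisfies the homogeneous recursion, so the discrete-time part of the argument is a straightforward iteration of the lemma preceding Proposition~\ref{prop: perorbseries}.

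First I would observe that subtracting the two instances of the recursion yields
\begin{equation}
C(k+1)-C^*(k+1)=X(1)\bigl(C(k)-C^*(k)\bigr)X^\mathsf{T}(1),
\end{equation}
and hence, by induction together with $X(k)=X(1)^k$,
\begin{equation}
C(k)-C^*(k)=X(k)\bigl(C(0)-C^*(0)\bigr)X^\mathsf{T}(k).
\end{equation}
Next I would verify that $w^\mathsf{T}C^*(0)w=b$, so that $w^\mathsf{T}\bigl(C(0)-C^*(0)\bigr)w=0$. This uses Lemma~\ref{lem:normalizationper} to give $w^\mathsf{T}f(\gamma(0))=1$ for the first term of \eqref{eq:C0explicit}, together with $w^\mathsf{T}Q(0)=0$, which follows by direct substitution from \eqref{eq:Q(t)perdef} and Lemma~\ref{lem:normalizationper}, to kill every summand.

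Combining these two facts with the asymptotic expansion $X(k)=f(\gamma(0))w^\mathsf{T}+\mathcal{O}\bigl(\exp(-k/\tau_{\mathrm{tr}})\bigr)$, I would expand the quadratic form in $X(k)$ and observe that the leading rank-one contribution is proportional to $w^\mathsf{T}\bigl(C(0)-C^*(0)\bigr)w=0$, while each of the mixed and remainder terms is bounded by a constant multiple of $\exp(-k/\tau_{\mathrm{tr}})$. Thus $C(k)-C^*(k)=\mathcal{O}\bigl(\exp(-k/\tau_{\mathrm{tr}})\bigr)$ as $k\to\infty$.

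The final step is to pass from integer times to arbitrary $\tau$. Writing $\tau=k+s$ with $s\in[0,1)$ and applying the representation \eqref{eq:Ctr_integral} restarted at time $k$ gives
\begin{equation}
C(\tau)-C^*(\tau)=X(s)\bigl(C(k)-C^*(k)\bigr)X^\mathsf{T}(s),
\end{equation}
because the $\mathcal{I}$-type forcing piece is identical for the two processes on $[k,k+s]$. Since $\|X(s)\|$ is uniformly bounded on $[0,1]$ by continuity, the exponential decay at integer times carries over uniformly in $s$, which gives the claim. I do not anticipate a serious obstacle; the only point that needs a little care is the verification $w^\mathsf{T}C^*(0)w=b$, since without it the leading rank-one term in $X(k)(\cdot)X^\mathsf{T}(k)$ would not vanish and convergence would fail.
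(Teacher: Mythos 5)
Your argument is correct and is essentially the reasoning the paper leaves implicit (the corollary is stated there without an explicit proof): the difference of the two solutions of \eqref{eq:Ctr_integral} evolves homogeneously as $X(\tau)\bigl(C(0)-C^*(0)\bigr)X^\mathsf{T}(\tau)$, the normalization $w^\mathsf{T}f(\gamma(0))=1$ and $w^\mathsf{T}Q(0)=0$ give $w^\mathsf{T}\bigl(C(0)-C^*(0)\bigr)w=0$, and the expansion $X(k)=f(\gamma(0))w^\mathsf{T}+\mathcal{O}\bigl(\exp(-k/\tau_{\mathrm{tr}})\bigr)$ together with boundedness of $X(s)$ on $[0,1]$ yields the convergence. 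No gaps; your care in verifying $w^\mathsf{T}C^*(0)w=b$ is exactly the point on which the claim hinges.
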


\begin{proposition}
\label{prop: perorbbvp}
Let $C_\mathrm{per}$ denote the function in \eqref{eq:Ctr_integral} obtained with $C(0)$ given by \eqref{eq:C0explicit} when $b=0$. It follows that, when $a=0$, $C_\mathrm{per}$ is the unique periodic solution of the periodic Lyapunov equation (cf.~\cite{bolzern1988periodic})
\begin{align}
\label{eq:covardiff}
\dot{C}&=T\left(\mathrm{D} f(\gamma)C+C\mathrm{D} f^\mathsf{T}(\gamma)+QF(\gamma)F^\mathsf{T}(\gamma)Q^\mathsf{T}+a f(\gamma)f^\mathsf{T}(\gamma)\right),
\end{align}
for which $w^\mathsf{T}C(0)w=0$ and that no periodic solution exists otherwise.
\end{proposition}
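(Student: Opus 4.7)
The proposition makes three separate claims---that $C_\mathrm{per}$ satisfies \eqref{eq:covardiff} with $a=0$, that it is the unique such periodic solution with $w^\mathsf{T}C(0)w=0$, and that no periodic solution exists when $a\neq 0$---which the plan is to address in sequence. For the first, I would differentiate \eqref{eq:Ctr_integral} directly for arbitrary initial data $C(0)$. Writing $C=C_1+C_2$ with $C_1(\tau):=X(\tau)C(0)X^\mathsf{T}(\tau)$ and $C_2(\tau):=TQ(\tau)X(\tau)M(\tau)X^\mathsf{T}(\tau)Q^\mathsf{T}(\tau)$, where $M(\tau):=\int_0^\tau G(s)G^\mathsf{T}(s)\,\mathrm{d}s$, the variational equation \eqref{eq:varprobper} yields $\dot C_1=T(\mathrm{D}f(\gamma)C_1+C_1\mathrm{D}f^\mathsf{T}(\gamma))$ immediately. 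For $\dot C_2$, I would first exploit the invariance \eqref{eq:QXinvariance} to rewrite $C_2=TX(\tau)Q(0)M(\tau)Q^\mathsf{T}(0)X^\mathsf{T}(\tau)$, removing all time-dependence from the projections. Differentiating then produces two Lyapunov-type terms together with $TX(\tau)Q(0)G(\tau)G^\mathsf{T}(\tau)Q^\mathsf{T}(0)X^\mathsf{T}(\tau)$, and the identity $XQ(0)G=QF(\gamma)$---which follows from $X(t)f(\gamma(0))=f(\gamma(t))$, $w^\mathsf{T}X^{-1}(t)=\lambda^\mathsf{T}(t)$, and the form of $Q$ given in the corollary following Lemma~\ref{lem:adjbvpper}---collapses this contribution to the forcing $TQF(\gamma)F^\mathsf{T}(\gamma)Q^\mathsf{T}$ on the right-hand side of \eqref{eq:covardiff} with $a=0$. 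Periodicity of $C_\mathrm{per}$ and the condition $w^\mathsf{T}C_\mathrm{per}(0)w=0$ then follow from Proposition~\ref{prop: perorbseries} applied with $b=0$.

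For uniqueness when $a=0$, I would consider the difference $\Delta=C_1-C_2$ of two periodic candidates both satisfying $w^\mathsf{T}C_i(0)w=0$. This $\Delta$ satisfies the homogeneous matrix Lyapunov equation, hence takes the form $\Delta(\tau)=X(\tau)\Delta(0)X^\mathsf{T}(\tau)$, and periodicity imposes $X(1)\Delta(0)X^\mathsf{T}(1)=\Delta(0)$. Under the standing hypothesis that the unstable eigenspace of $X(1)$ is empty---so that the only unit-modulus eigenvalue of $X(1)$ is the simple one with eigenvector $f(\gamma(0))$---the solutions of this fixed-point equation reduce to scalar multiples of $f(\gamma(0))f^\mathsf{T}(\gamma(0))$. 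The constraint $w^\mathsf{T}\Delta(0)w=0$ together with $w^\mathsf{T}f(\gamma(0))=1$ from Lemma~\ref{lem:normalizationper} then forces $\Delta\equiv 0$.

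For the non-existence claim when $a\neq 0$, the key observation is that $\lambda^\mathsf{T}Q=\lambda^\mathsf{T}-(\lambda^\mathsf{T}f(\gamma))\lambda^\mathsf{T}=0$ by the form of $Q$ and the normalization \eqref{eq:normcondt}. Combining the adjoint equation \eqref{eq:adjtbvpper}, which gives $\dot\lambda=-T\mathrm{D}f^\mathsf{T}(\gamma)\lambda$, with \eqref{eq:covardiff} along any solution $C(\tau)$ produces cancellation of all $\mathrm{D}f(\gamma)$ terms and vanishing of the $QFF^\mathsf{T}Q^\mathsf{T}$ contribution, leaving $\mathrm{d}(\lambda^\mathsf{T}(\tau)C(\tau)\lambda(\tau))/\mathrm{d}t=Ta$; periodicity of $\lambda$ and $C$ would then force the left-hand side to have zero mean over one period, contradicting $a\neq 0$. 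The step I expect to demand the most care is the $\dot C_2$ computation in the verification---specifically routing the differentiation through \eqref{eq:QXinvariance} so that $\dot Q$ need not be expanded explicitly---while the remaining two claims reduce to essentially algebraic manipulations once the scalar quantity $\lambda^\mathsf{T}C\lambda$ and the spectral structure of $X(1)$ are brought to bear.
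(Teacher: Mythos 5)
Your proposal is correct and rests on the same ingredients as the paper's proof: the conserved quantity $\lambda^\mathsf{T}C\lambda$, whose drift $Ta$ rules out periodic solutions when $a\ne 0$, the invariance condition \eqref{eq:QXinvariance} to handle the forcing term, and the spectral structure of $X(1)$ (simple eigenvalue $1$, all others inside the unit circle) for uniqueness. The only real difference is one of direction: the paper transforms the Lyapunov equation via $C=X\tilde{C}X^\mathsf{T}$ and integrates to recover \eqref{eq:Ctr_integral}, deferring periodicity and uniqueness to Proposition~\ref{prop: perorbseries} and the no-reciprocal-eigenvalue observation in Corollary~\ref{cor:perorbinvform}, whereas you verify the explicit formula forward and spell out uniqueness through the homogeneous Lyapunov equation and the fixed-point equation $X(1)\Delta(0)X^\mathsf{T}(1)=\Delta(0)$ --- both routes are sound and essentially equivalent.
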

\begin{proof}
It follows from \eqref{eq:covardiff} that
\begin{equation}
\frac{d}{dt}\lambda^\mathsf{T}C\lambda=a T
\end{equation}
Thus, $C$ is periodic only if $a=0$. The transformation $C=X\tilde{C}X^\mathsf{T}$ then implies that
\begin{equation}
\dot{\tilde{C}}=TQ(0)GG^\mathsf{T}Q^\mathsf{T}(0)
\end{equation}
and the conclusion follows by integration.
\end{proof}

\begin{corollary}
\label{cor:perorbinvform}
Let $\mathfrak{vec}$ denote the vectorization operator that turns a matrix into a column vector. Then, $\mathfrak{vec}\left(C_\mathrm{per}(0)\right)$ may be obtained from the first $n^2$ components of the image of the column vector
\begin{align}
    \begin{pmatrix}\mathfrak{vec}\left((Q(0)X(1)\mathcal{I}X^\mathsf{T}(1)Q^\mathsf{T}(0)\right)\\0\end{pmatrix}
\end{align}
under multiplication by the matrix
\begin{equation}
    \begin{pmatrix}I_{2n}-X(1)\otimes X(1) & f(\gamma(0))\otimes f(\gamma(0))\\w^\mathsf{T}\otimes w^\mathsf{T} & 0\end{pmatrix}^{-1}\nonumber\\
\end{equation}
where the inverse exists since no pair of eigenvalues of $X(1)$ are reciprocal numbers.
\end{corollary}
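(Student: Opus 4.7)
The plan is to recast the Lyapunov fixed-point identity for $C_\mathrm{per}(0)$ together with the scalar normalization $w^\mathsf{T}C_\mathrm{per}(0)w=0$ as a single bordered $(n^2+1)\times(n^2+1)$ linear system, and then to use the Kronecker spectral structure of $X(1)\otimes X(1)$ to establish invertibility of its coefficient matrix.

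First, I would vectorize the fixed-point identity $C_\mathrm{per}(0) = X(1)C_\mathrm{per}(0)X^\mathsf{T}(1) + TQ(0)X(1)\mathcal{I}X^\mathsf{T}(1)Q^\mathsf{T}(0)$ furnished by Proposition~\ref{prop: perorbseries} with $b=0$, using the standard identity $\mathfrak{vec}(ABC)=(C^\mathsf{T}\otimes A)\mathfrak{vec}(B)$, to obtain
\[
\bigl(I_{n^2}-X(1)\otimes X(1)\bigr)\mathfrak{vec}\bigl(C_\mathrm{per}(0)\bigr) = T\,\mathfrak{vec}\bigl(Q(0)X(1)\mathcal{I}X^\mathsf{T}(1)Q^\mathsf{T}(0)\bigr).
\]
The normalization $w^\mathsf{T}C_\mathrm{per}(0)w=0$ rewrites as $(w^\mathsf{T}\otimes w^\mathsf{T})\mathfrak{vec}(C_\mathrm{per}(0))=0$. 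Stacking these equations and appending the bordering column $f(\gamma(0))\otimes f(\gamma(0))$ with an auxiliary scalar unknown produces precisely the block system in the statement. The candidate $(\mathfrak{vec}(C_\mathrm{per}(0)),0)^\mathsf{T}$ satisfies both block rows by construction, so the corollary reduces to showing invertibility of the bordered coefficient matrix, which in turn yields uniqueness of the first $n^2$ components.

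To establish invertibility, I would exploit that the spectrum of $X(1)\otimes X(1)$ consists of all products $\mu_i\mu_j$ of eigenvalues of $X(1)$. The reciprocal-eigenvalue hypothesis, combined with the simplicity of the orbit-direction Floquet multiplier $\mu=1$ (a consequence of hyperbolicity established earlier in the section), makes $1$ a simple eigenvalue of $X(1)\otimes X(1)$, with right eigenvector $f(\gamma(0))\otimes f(\gamma(0))$ coming from $X(1)f(\gamma(0))=f(\gamma(0))$ and left eigenvector $w\otimes w$ coming from $w^\mathsf{T}X(1)=w^\mathsf{T}$ (Lemma~\ref{lem:leftnullvectorper}). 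Hence the top-left block $I_{n^2}-X(1)\otimes X(1)$ has one-dimensional kernel and co-kernel, spanned by exactly the two bordering vectors appearing in the statement. A standard bordering lemma then certifies invertibility provided the pairing $(w\otimes w)^\mathsf{T}(f(\gamma(0))\otimes f(\gamma(0)))=(w^\mathsf{T}f(\gamma(0)))^2$ is nonzero, which holds with value $1$ by Lemma~\ref{lem:normalizationper}.

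The main obstacle is the clean identification of the one-dimensional kernel and co-kernel of $I_{n^2}-X(1)\otimes X(1)$ as spanned by these specific tensor products, and the careful invocation of the bordered-matrix criterion so that both the spectral argument and the vanishing of the auxiliary scalar are unambiguous; the remaining manipulations are routine linear algebra using the Kronecker identity.
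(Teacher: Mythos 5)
Your proof is correct and is essentially the argument the paper intends (the corollary is stated there without an explicit proof): vectorize the discrete-time fixed-point relation $C_\mathrm{per}(0)=X(1)C_\mathrm{per}(0)X^\mathsf{T}(1)+TQ(0)X(1)\mathcal{I}X^\mathsf{T}(1)Q^\mathsf{T}(0)$ together with the constraint $w^\mathsf{T}C_\mathrm{per}(0)w=0$, border with the column $f(\gamma(0))\otimes f(\gamma(0))$, and use that $1$ is a simple eigenvalue of $X(1)\otimes X(1)$ with right and left eigenvectors $f(\gamma(0))\otimes f(\gamma(0))$ and $w\otimes w$ and nonzero pairing $(w^\mathsf{T}f(\gamma(0)))^2=1$ to conclude invertibility of the bordered matrix. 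The only discrepancies are typographical on the paper's side rather than gaps in your argument: the right-hand side vector should carry the factor $T$ inherited from \eqref{eq:Ctr_integral}, and the identity block in the bordered matrix should be $I_{n^2}$ rather than $I_{2n}$.
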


\subsection{Numerical examples}
\label{sec:numexpper}
An analytically tractable example (a version of which was considered in \cite{10.1115/DETC2022-91153}) is given by the periodic solution
\begin{equation}
\label{eq:hopf_init_sol}
\gamma(t)=\begin{pmatrix}\cos 2\pi t\\\sin 2\pi t\end{pmatrix}
\end{equation}
with period $1$ of the dynamical system \eqref{eq:determ} with
\begin{equation}
\label{eq: hopf normal form}
f(x)=\begin{pmatrix}\vspace{1mm}x_1-x_2-x_1(x_1^2+x_2^2)\\x_1+x_2-x_2(x_1^2+x_2^2)\end{pmatrix}
\end{equation}
and $T=2\pi$, such that
\begin{equation}
f(\gamma(t))=\begin{pmatrix}-\sin 2\pi t\\\cos 2\pi t\end{pmatrix}.
\end{equation}
Since
\begin{equation}
X(t)=\begin{pmatrix}e^{-4\pi t}\cos 2\pi t & -\sin 2\pi t \\e^{-4\pi t}\sin 2\pi t & \cos 2\pi t \end{pmatrix},
\end{equation}
the eigenvalues of the monodromy matrix $X(1)$ equal $1$ and $e^{-4\pi}$ with eigenspaces spanned by $f(\gamma(0))=(0,1)^\mathsf{T}$ and $(1,0)^\mathsf{T}$, respectively. Then, $w^\mathsf{T}=(0,1)$ and, by definition,
\begin{equation}
\lambda(t)=X^{-1,\mathsf{T}}(t)w=\begin{pmatrix}-\sin 2\pi t\\\cos 2\pi t\end{pmatrix}.
\end{equation}
and
\begin{equation}
Q(t)=I_2-f(\gamma(t))\lambda^\mathsf{T}(t)=\begin{pmatrix}\cos^22\pi t & \cos 2\pi t\sin 2\pi t\\\cos 2\pi t\sin 2\pi t & \sin^22\pi t\end{pmatrix}.
\end{equation} 
It follows that
\begin{equation}
TQ(0)\left(\int_0^t G(s)G^\mathsf{T}(s)\mathrm{d}s\right)Q^\mathsf{T}(0)=\begin{pmatrix}\mathcal{I}(t) & 0\\0 & 0\end{pmatrix},
\end{equation}
where $\mathcal{I}(0)=0$. The result of Proposition~\ref{prop: perorbseries} then implies that every periodic solution of \eqref{eq:Ctr_integral} is of the form
\begin{align}
C(t)&=e^{-8\pi t}\left(\mathcal{I}(t)+\frac{\mathcal{I}(1)}{e^{8\pi}-1}\right)\begin{pmatrix}\cos^22\pi t & \cos 2\pi t\sin 2\pi t\\\cos 2\pi t\sin 2\pi t & \sin^22\pi t\end{pmatrix}\nonumber\\
&\quad+b\begin{pmatrix}\sin^2 2\pi t & -\sin 2\pi t\cos 2\pi t\\-\sin2\pi t\cos2\pi t & \cos^2 2\pi t\end{pmatrix}
\end{align}
for some $b$, such that $\lambda^\mathsf{T}C\lambda\equiv b$. We obtain a periodic rescaled covariance function provided that $b=0$. Its eigenvalues equal
\begin{equation}
e^{-8\pi t}\left(\mathcal{I}(t)+\frac{\mathcal{I}(1)}{e^{8\pi}-1}\right)
\end{equation}
and $0$ with eigenvectors $(\cos 2\pi t,\sin 2\pi t)^\mathsf{T}$ and $(-\sin 2\pi t, \cos 2\pi t)^\mathsf{T}$, respectively. With the noise defined in terms of the matrix
\begin{equation}
F(x)=\begin{pmatrix}x_1x_2\\x_2^2 \end{pmatrix}
\end{equation}
and a single standard Brownian motion, we obtain
\begin{equation}
\mathcal{I}(t)=\frac{e^{8\pi t}(5-4\cos 4\pi t-2\sin 4\pi t)-1}{40}
\end{equation}
and the nonzero eigenvalue of the periodic rescaled covariance function equals
\begin{equation}
\label{eq:eigenvalue}
\frac{5-4\cos 4\pi t-2\sin 4 \pi t}{40}.
\end{equation}
 
We compare the analytical predictions with the approximate results obtained from an implementation of the periodic boundary-value problem in the software package \textsc{coco} using continuous, piecewise-polynomial interpolants for $\gamma(t)$, $\lambda(t)$, and $C(t)$ with base points on a uniform mesh and constrained by a discretization of the governing differential equations at collocation nodes associated with each mesh interval. Figure~\ref{fig1: po_ex_1} shows the non-trivial eigenvalue of the covariance matrix obtained using~\textsc{coco} and the analytical expression~\eqref{eq:eigenvalue}. The results are in excellent agreement.

\begin{figure}[ht]
    \centering
    {\includegraphics[width=0.65\textwidth]{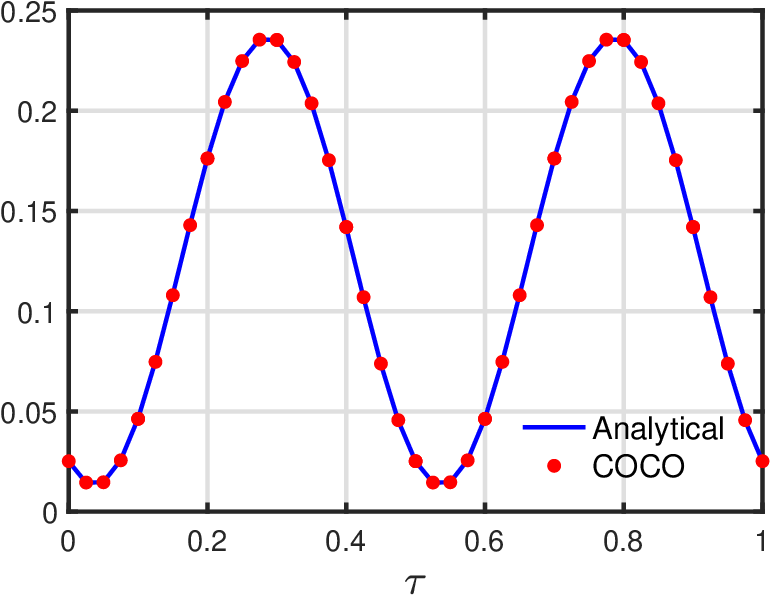}}
    \caption{Comparison of the non-trivial eigenvalue of the covariance matrix function for the vector field in Eq.~\eqref{eq: hopf normal form} obtained from a discretization of the governing boundary-value problem in Proposition~\ref{prop: perorbbvp} in the software package \textsc{coco} and from the analytical expression reported in~\eqref{eq:eigenvalue}, respectively.}
    \label{fig1: po_ex_1}
\end{figure}

Finally, we validate our leading-order analysis by comparing the predictions with direct numerical simulation of the original It\^{o} SDE. To generate the stochastic trajectories, we use an explicit Euler Maruyama integrator in~\textsc{Matlab} with time-step $\mathrm{d}t=10^{-4}$, noise intensity $\sigma=0.1$, and Brownian increments generated using the \texttt{randn} command. The system is integrated for a duration of 500 periods of the underlying limit cycle. In Fig.~\ref{fig2: po_ex_1}, we show the resultant stochastic trajectories projected along the normalized radial eigenvector $e(\tau)=(\cos 2\pi \tau,\sin 2\pi \tau)^\mathsf{T}$ corresponding to the nonzero eigenvalue of the covariance matrix $C(\tau)$. Specifically, for each point on the stochastic trajectory $x(t)$, we use the hyperplane definition to compute the corresponding value of $\tau(t)$ and project the perturbation $x(t)-\gamma(\tau(t))$ onto $e(\tau(t))$. These results are compared with the one and two standard deviation predictions computed from $\sigma$ times the square root of the eigenvalue given in \eqref{eq:eigenvalue}. We observe good qualitative agreement between the predicted variations and those obtained from direct integration.

\begin{figure}[htp]
    \centering
    {\includegraphics[width=0.55\textwidth]{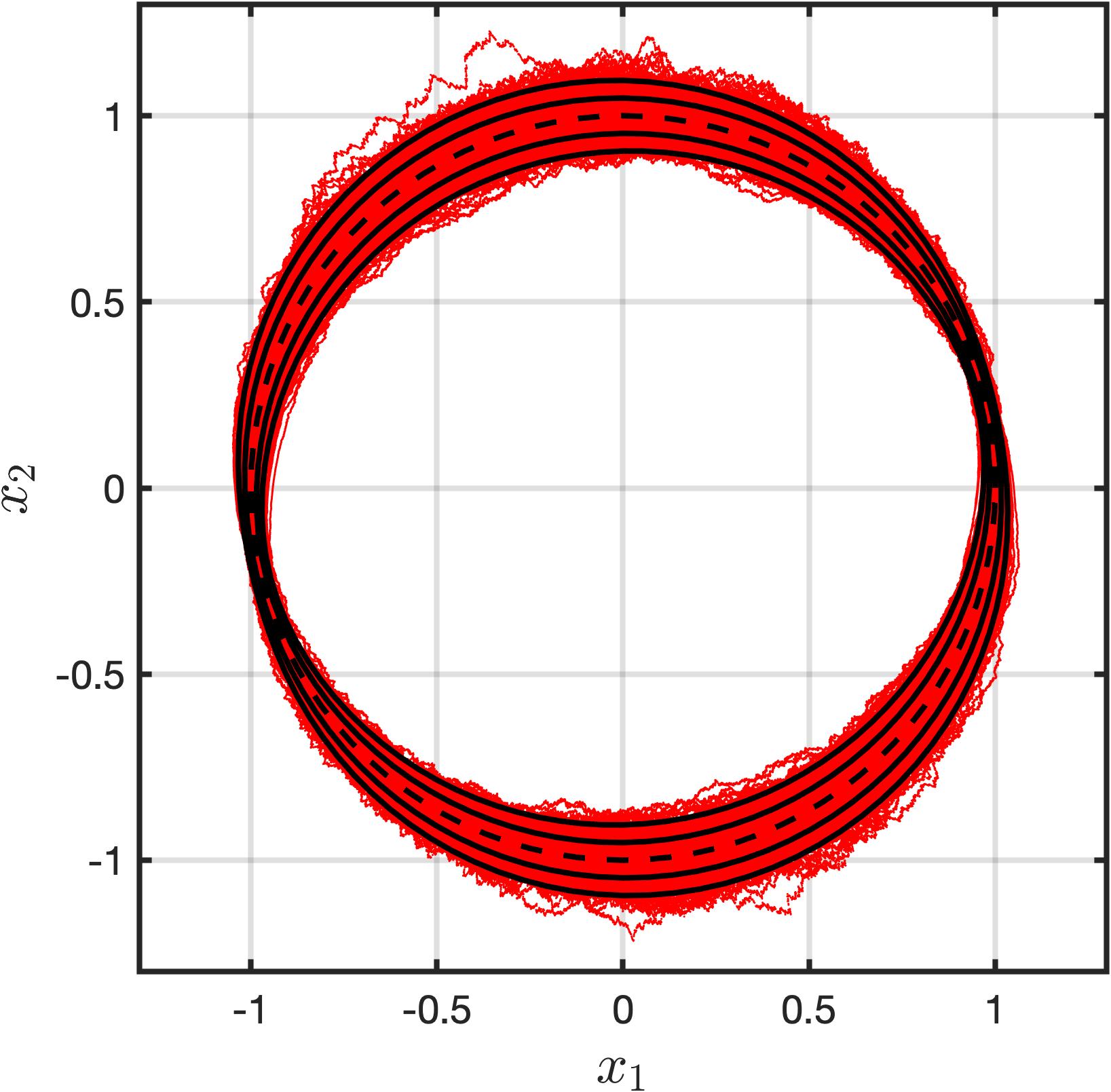}}
    \bigskip
    
    {\includegraphics[width=0.65\textwidth]{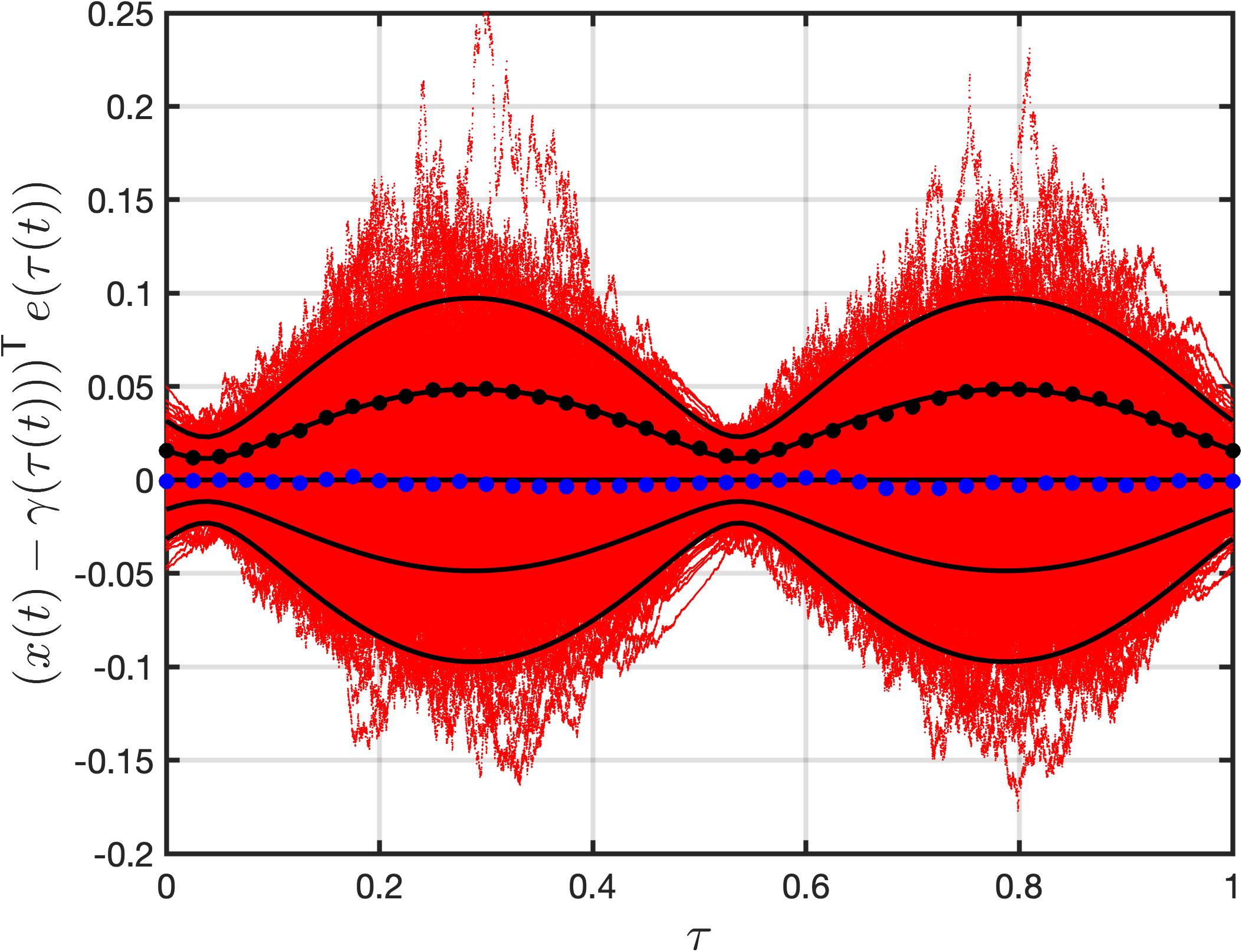}}
    \caption{Comparison of leading-order theoretical predictions and a numerical time history describing the influence of small Brownian noise on the dynamics near a limit cycle of the deterministic vector field Eq.~\eqref{eq: hopf normal form} in cartesian (upper panel) and polar (lower panel) coordinates. Stochastic trajectories (red dots) were obtained using an Euler-Maruyama scheme with $\sigma=0.1$ and $\mathrm{d}t=10^{-4}$ for $500$ periods of the limit cycle. Dashed curves represent the deterministic limit cycle while solid curves represent predicted deviations from the limit cycle equal to one and two standard deviations, respectively, computed using \eqref{eq:eigenvalue}. In the lower panel, the vertical axis equals the radial deviation from the limit cycle, since  the normalized radial eigenvector $e(\tau(t))=\gamma(\tau(t))$ and $\lambda(\tau(t))^\mathsf{T}\left(x(t)-\gamma(\tau(t))\right)=0$. There, filled circles represent the statistical mean (blue) and standard deviation (black) of simulated data collected in bins of width $\Delta\tau=1/40$.}
    \label{fig2: po_ex_1}
\end{figure}

In the case of periodically excited deterministic dynamics, the analysis in the previous section carries over in its entirety by assigning the identity matrix to $Q$ in \eqref{eq:C0explicit} and \eqref{eq:Ctr_integral}. Consider, for example, the periodic solution
\begin{equation}
    \gamma(t)=\begin{pmatrix}\sin2\pi t\\\cos2\pi t\end{pmatrix}
\end{equation}
with period $1$ of the linear $1$-periodic dynamical system
\begin{equation}
    \dot{x}=2\pi f(t,x),\quad\mbox{ where }f(t,x):=\begin{pmatrix}x_2\\-2x_2-x_1+2\cos 2\pi t\end{pmatrix},
\end{equation}
such that $T=2\pi$ and
\begin{equation}
    f(t,\gamma(t))=\begin{pmatrix}\cos2\pi t\\-\sin2\pi t\end{pmatrix}.
\end{equation}
In this case
\begin{equation}
    X(t)=e^{-2\pi t}\begin{pmatrix}1+2\pi t & 2\pi t\\-2\pi t& 1-2\pi t\end{pmatrix}
\end{equation}
with the (sole) eigenvalue of the monodromy matrix $X(1)$ equal to $e^{-2\pi}$. Since the vector field is non-autonomous, it is not the case that $X^{-1}(t)f(t,\gamma(t))\equiv f(0,\gamma(0))$, but this property is also no longer needed for the construction of $Q$.

Now suppose that $F(x)=(0,x_1)^\mathsf{T}$. Substitution in \eqref{eq:C0explicit} then yields
\begin{equation}
    C(0)=\frac{1}{32}\begin{pmatrix}5 & -1 \\-1 & 1\end{pmatrix}
\end{equation}
and from \eqref{eq:Ctr_integral},
\begin{equation}
\label{eq:ex2pred}
    C(t)=\frac{1}{32}\begin{pmatrix}4+\cos4\pi t-\sin4\pi t & -\cos4\pi t-\sin4\pi t\\-\cos4\pi t-\sin4\pi t & 4-3\cos4\pi t-\sin4\pi t\end{pmatrix}
\end{equation}
with pair of positive eigenvalues
\begin{equation}
\label{eq:eigslinper}
    \frac{4-\cos4\pi t-\sin4\pi t\pm\sqrt{3+2\cos8\pi t+\sin8\pi t}}{32}.
\end{equation}
The panels in Fig.~\ref{fig3: po_ex_2} demonstrate excellent agreement between the density clouds of points of intersection of stochastic trajectories with several constant-phase sections and ellipses spanned by the eigenvectors of the corresponding covariance matrix and with radii given by the square roots of the eigenvalues in \eqref{eq:eigslinper}.

\begin{figure}[htbp]
    \centering
    \includegraphics[width=0.65\textwidth]{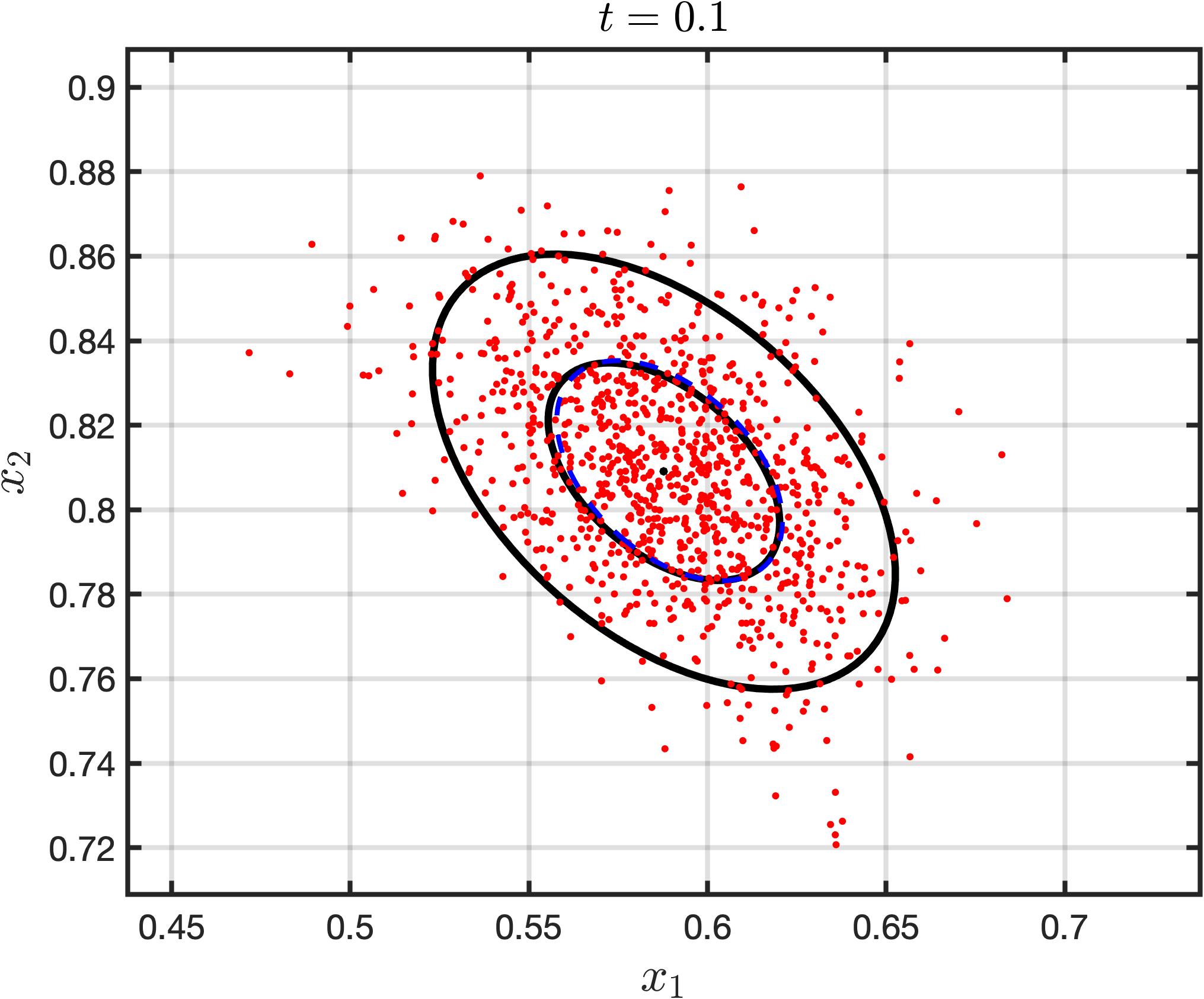}
    \bigskip
    
    \includegraphics[width=0.65\textwidth]{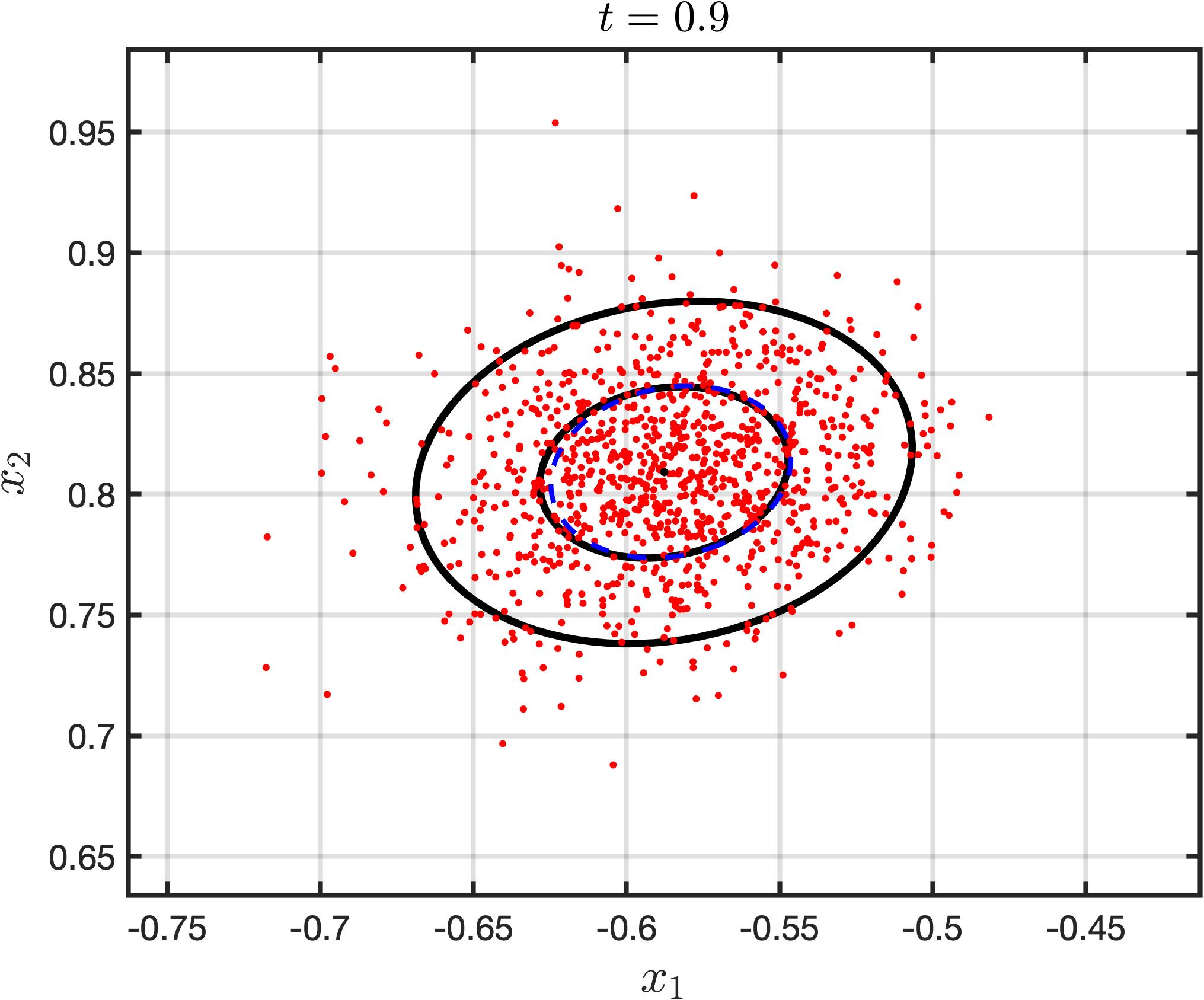}
    \caption{Comparison of leading-order theoretical predictions and a numerical time history describing the influence of small Brownian noise on the dynamics near a limit cycle of the harmonically excited, damped, linear oscillator in Sec.~\ref{sec:numexpper}. The top and bottom panels show samples (red dots) in sections of constant excitation phase ($t=0.1$ and $t=0.9$, respectively) obtained using an Euler-Maruyama scheme with $\sigma=0.1$ and $\mathrm{d}t=10^{-4}$ for $1,000$ periods of excitation. Solid (black) curves represent deviations from the intersection of the limit cycle (centered black dot) in terms of one and two standard deviations predicted using \eqref{eq:ex2pred} and \eqref{eq:eigslinper}. Dashed (blue) curves represent the one-standard deviation curve predicted from the statistical covariance matrix for the simulated data.}
    \label{fig3: po_ex_2}
\end{figure}

\section{Quasiperiodic Tori}
\label{sec: Quasiperiodic Tori}

A natural generalization of limit cycle dynamics is quasiperiodic motion on a transversally stable invariant torus, which we treat in this section. As in the previous section, we first derive a quasiperiodic covariance boundary-value problem to describe small fluctuations around a deterministic skeleton solution to leading order in the noise intensity. Next, we validate our approach against the results of stochastic numerical integration.

\subsection{Theoretical derivations}

Let $\mathbb{S}\sim[0,1]$. For some positive scalar\footnote{As in the previous section, for an autonomous vector field, $T$ is not typically known \textit{a priori} and is, instead, solved for simultaneously with $\gamma$ by appending suitable phase conditions to the quasiperiodic boundary-value problem in $x$ for fixed $\rho$, e.g., $h_i(\gamma(0,0))=0$ for some smooth scalar-valued functions $h_i$, $i=1,\ldots,m+1$. As before, we omit explicit parameter dependence for notational simplicity.} $T$, suppose that $\gamma(\phi,t)\in\mathbb{R}^n$ denotes a family of quasiperiodic solutions of the smooth dynamical system
\begin{equation}
\label{eq:determtorus}
\dot{x}=Tf(x),\qquad\mbox{ where }x:=x(t)\in\mathbb{R}^n,
\end{equation}
on an invariant torus, i.e., such that \eqref{eq:determtorus} is satisfied for $x(t)=\gamma(\phi,t)$ for all $\phi\in\mathbb{S}^m$, and there exists a function $u:\mathbb{S}^{m+1}\rightarrow\mathbb{R}^n$ and vector $\rho\in\mathbb{R}^m$ so that $r^\mathsf{T}\rho\ne 1$ for all rational vectors $r\in\mathbb{R}^m$ and $\gamma(\phi,t)=u(\phi+\rho t,t)$. In particular, $\gamma(\phi,t+1)=\gamma(\phi+\rho,t)$ for all $t$. When $m=0$, the analysis reduces to the case of a periodic orbit.

Let $X(\phi,t)$ denote the solution to the corresponding variational initial-value problem
\begin{equation}
\label{eq:varprobquasi}
\partial_t X=T\mathrm{D} f(\gamma)X,\quad X(\cdot,0)=I_n.
\end{equation}
By quasiperiodicity of $\gamma$, $X(\phi,t+1)=X(\phi+\rho,t)X(\phi,1)$. Furthermore, it follows that $X^{-1}(\cdot,t)\partial_t\gamma(\cdot,t)\equiv \partial_t\gamma(\cdot,0)$ and $X^{-1}(\cdot,t)\partial_{\phi_i}\gamma(\cdot,t)\equiv \partial_{\phi_i}\gamma(\cdot,0)$, $i=1,\ldots,m$, since equality holds at $t=0$ and differentiation with respect to $t$ followed by substitution of \eqref{eq:determtorus} and \eqref{eq:varprobquasi} shows that the left-hand sides are constant. As a special case, again by quasiperiodicity of $\gamma$, it follows that $\partial_t \gamma(\cdot,0)$ and $\partial_{\phi_i}\gamma(\cdot,0)$, $i=1,\ldots,m$, are right nullvectors of the operator
\begin{equation}
\Gamma_0:\delta(\cdot)\mapsto X(\cdot-\rho,1)\delta(\cdot-\rho)-\delta(\cdot)
\end{equation}
and that $\partial_t \gamma(\cdot,t)$ and $\partial_{\phi_i}\gamma(\cdot,t)$, $i=1,\ldots,m$, are right nullvectors of the operator
\begin{equation}
\Gamma_t:\delta(\cdot)\mapsto X(\cdot,t)\Gamma_0\left[X^{-1}(\cdot,t)\delta(\cdot)\right].
\end{equation}

Analogously with the periodic case and again following \cite{Dankowicz2022329}, assume next that $\gamma$ is \textit{normally hyperbolic} in the sense that there exists a quasiperiodic, smooth family of projections of the form
\begin{equation}
Q(\phi,t):=X(\phi,t)\left(I_n-\partial_t \gamma(\phi,0) w_t^\mathsf{T}(\phi)-\sum_{i=1}^{m}\partial_{\phi_i}\gamma(\phi,0) w_{\phi_i}^\mathsf{T}(\phi)\right)X^{-1}(\phi,t)
\end{equation}
for some periodic functions $w_t(\phi)$ and $w_{\phi_i}(\phi)$, $i=1,\ldots,m$, such that $\Gamma_0$ is a bijection with bounded inverse on the space of functions $\phi\mapsto Q(\phi,0)\delta(\phi)$ for arbitrary smooth functions $\delta$ on $\mathbb{S}$. As before, the family $Q(\phi,t)$ and the complementary family $I_n-Q(\phi,t)$ respect the spectral decomposition of $\Gamma_t+\mathrm{Id}$. 

In further analogy to the periodic case, we formally derive conditions on the nullspaces of $\Gamma_0$; the calculations are formal since we operate with transposition and integration instead of duality pairings. In particular, we let
\begin{equation}
    \int_{\mathbb{S}^n}w^\mathsf{T}(\phi)\left[\cdot\right]\,\mathrm{d}\phi
\end{equation}
denote a linear functional on the space of continuous functions on $\mathbb{S}$ in terms of some continuous, periodic function $w(\phi)$. Then, the left nullspace of $\Gamma_0$ are all functions $w(\phi)$ such that
\begin{equation}
    \int_{\mathbb{S}^n}w^\mathsf{T}(\phi)\Gamma_0[\delta(\phi)]\,\mathrm{d}\phi=0
\end{equation}
for arbitrary $\delta(\phi)$. Claims about uniqueness should be understood in this context as limited to spaces in which such representations are possible.

\subsubsection{The adjoint boundary-value problem}

We mirror the treatment in the previous section by deriving the adjoint boundary-value problems in Lemma~\ref{lem:adjbvpquasi} and a projection of the local dynamics onto directions transversal to the torus in Corollary~\ref{cor:transdynquasi}. The discussion is adapted from the treatment of two-tori ($m=1$) in \cite{Dankowicz2022329} .
\begin{lemma}
\label{lem:nullspacequa}
The linear functionals
\begin{equation}
\int_{\mathbb{S}^{m}}w_t^\mathsf{T}(\phi)\left[\cdot\right]\,\mathrm{d}\phi\mbox{ and }\int_{\mathbb{S}^{m}}w_{\phi_i}^\mathsf{T}(\phi)\left[\cdot\right]\,\mathrm{d}\phi,\,i=1,\ldots,m
\end{equation}
are the unique elements of the left nullspace of $\Gamma_0$ such that
\begin{equation}
\label{eq:normalize1}
w_t^\mathsf{T}(\cdot)\partial_t \gamma(\cdot,0)\equiv1,\,w_t^\mathsf{T}(\cdot)\partial_{\phi_i} \gamma(\cdot,0)\equiv0\equiv w_{\phi_i}^\mathsf{T}(\cdot)\partial_t\gamma(\cdot,0)
\end{equation}
for $i=1,\ldots,m$ and
\begin{equation}
\label{eq:normalize2}
w_{\phi_i}^\mathsf{T}(\cdot)\partial_{\phi_j} \gamma(\cdot,0)\equiv\delta_{ij}
\end{equation}
for $i,j=1,\ldots,m$, where $\delta_{ij}=1$ when $i=j$ and $0$ otherwise.
\end{lemma}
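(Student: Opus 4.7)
The plan is to parallel the sequence used in the periodic case (Lemmas~\ref{lem:normalizationper}--\ref{lem:leftnullvectorper}) in three stages: derive the normalization conditions~\eqref{eq:normalize1}--\eqref{eq:normalize2} from idempotency of $Q(\cdot,0)$; verify that the candidate linear functionals annihilate $\Gamma_0$; and finally establish uniqueness using the assumed bijectivity of $\Gamma_0$ on $\Ima Q(\cdot,0)$.

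First I would compute $Q^2(\phi,0)=Q(\phi,0)$ directly from the defining formula. Expanding the square, the cross terms are linear combinations of $\partial_t\gamma(\phi,0)$ and $\partial_{\phi_i}\gamma(\phi,0)$ with coefficients given by the inner products $w_t^\mathsf{T}\partial_t\gamma$, $w_t^\mathsf{T}\partial_{\phi_j}\gamma$, $w_{\phi_i}^\mathsf{T}\partial_t\gamma$, and $w_{\phi_i}^\mathsf{T}\partial_{\phi_j}\gamma$ evaluated at $(\phi,0)$; equating with $Q(\phi,0)$ and using local linear independence of $\partial_t\gamma$ and the $\partial_{\phi_i}\gamma$ along the torus forces precisely \eqref{eq:normalize1} and \eqref{eq:normalize2}, exactly as in Lemma~\ref{lem:normalizationper}.

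Next, to see that the claimed functionals lie in the left nullspace of $\Gamma_0$, I would exploit the quasiperiodicity $Q(\phi,1)=Q(\phi+\rho,0)$ built into the definition of $Q$. Writing out both sides via the defining formula and using the identities $X(\phi,1)\partial_t\gamma(\phi,0)=\partial_t\gamma(\phi,1)=\partial_t\gamma(\phi+\rho,0)$ and $X(\phi,1)\partial_{\phi_i}\gamma(\phi,0)=\partial_{\phi_i}\gamma(\phi+\rho,0)$ (consequences of $\gamma(\phi,t+1)=\gamma(\phi+\rho,t)$ and the transport identities stated just after \eqref{eq:varprobquasi}), then matching the coefficients of the linearly independent vectors $\partial_t\gamma(\phi+\rho,0)$ and $\partial_{\phi_i}\gamma(\phi+\rho,0)$ yields the ``transport'' relations
\[
w_t^\mathsf{T}(\phi)=w_t^\mathsf{T}(\phi+\rho)X(\phi,1),\qquad w_{\phi_i}^\mathsf{T}(\phi)=w_{\phi_i}^\mathsf{T}(\phi+\rho)X(\phi,1),\quad i=1,\dots,m.
\]
Integrating $w^\mathsf{T}(\phi)\Gamma_0[\delta](\phi)=w^\mathsf{T}(\phi)X(\phi-\rho,1)\delta(\phi-\rho)-w^\mathsf{T}(\phi)\delta(\phi)$ over $\mathbb{S}^m$ and performing the shift $\phi\mapsto\phi+\rho$ in the first summand (which preserves Lebesgue measure on the torus) then shows that the corresponding integrals vanish for arbitrary $\delta$.

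For uniqueness, suppose $\int \tilde w^\mathsf{T}(\phi)[\,\cdot\,]\,\mathrm{d}\phi$ is any left-nullspace element satisfying the prescribed normalizations. For each smooth $\delta$, surjectivity of $\Gamma_0$ on $\Ima Q(\cdot,0)$ provides $u$ with $\Gamma_0[u]=Q(\cdot,0)\delta$, so $\int\tilde w^\mathsf{T}Q(\cdot,0)\delta\,\mathrm{d}\phi=0$. Expanding $Q(\cdot,0)$ and invoking arbitrariness of $\delta$ yields the pointwise decomposition
\[
\tilde w^\mathsf{T}(\phi)=\bigl(\tilde w^\mathsf{T}(\phi)\partial_t\gamma(\phi,0)\bigr)w_t^\mathsf{T}(\phi)+\sum_{i=1}^{m}\bigl(\tilde w^\mathsf{T}(\phi)\partial_{\phi_i}\gamma(\phi,0)\bigr)w_{\phi_i}^\mathsf{T}(\phi),
\]
and imposing whichever of \eqref{eq:normalize1}--\eqref{eq:normalize2} the candidate $\tilde w$ is assumed to satisfy immediately identifies it with $w_t$ or with one of the $w_{\phi_i}$. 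The main obstacle I anticipate is the step converting $Q(\phi,1)=Q(\phi+\rho,0)$ into the transport identities for $w_t$ and $w_{\phi_i}$: one must argue carefully that the coefficients of the generically independent tangent vectors must individually vanish, which is where the (mild) nondegeneracy of the torus parametrization $\gamma$ and the normal hyperbolicity assumption combine.
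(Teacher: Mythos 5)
Your proposal is correct and follows essentially the same route as the paper's own proof: normalization conditions from idempotency $Q^2(\phi,0)=Q(\phi,0)$, left-nullspace membership via the equivalence of quasiperiodicity $Q(\phi,1)=Q(\phi+\rho,0)$ with the transport relations $w_\ast^\mathsf{T}(\phi)X^{-1}(\phi,1)=w_\ast^\mathsf{T}(\phi+\rho)$ and a shift of variables in the integral, and uniqueness from surjectivity of $\Gamma_0$ onto functions of the form $Q(\cdot,0)\delta$, giving $\tilde{w}^\mathsf{T}Q(\cdot,0)\equiv 0$ and hence $\tilde{w}\in\mathrm{span}\{w_t,w_{\phi_1},\ldots,w_{\phi_m}\}$ pointwise. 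The only difference is that you spell out the coefficient-matching step (using linear independence of $\partial_t\gamma$ and $\partial_{\phi_i}\gamma$) that the paper leaves implicit, which is a fine addition rather than a deviation.
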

\begin{proof}
Let $\ast$ represents either $t$ or $\phi_i$. Then, quasiperiodicity $Q(\phi,1)=Q(\phi+\rho,0)$ holds if and only if $w_\ast^\mathsf{T}(\phi)X^{-1}(\phi,1)=w_\ast^\mathsf{T}(\phi+\rho)$. But,
\begin{align}
&\int_{\mathbb{S}^{m}}w_\ast^\mathsf{T}(\phi)\Gamma_0\left[\delta(\phi)\right]\,\mathrm{d}\phi=\int_{\mathbb{S}^{m}}w_\ast^\mathsf{T}(\phi)\left(X(\phi-\rho,1)\delta(\phi-\rho)-\delta(\phi)\right)\,\mathrm{d}\phi\nonumber\\
&\qquad\qquad=\int_{\mathbb{S}^{m}}\left(w_\ast^\mathsf{T}(\phi+\rho)X(\phi,1)-w_\ast^\mathsf{T}(\phi)\right)\delta(\phi)\,\mathrm{d}\phi
\end{align}
for an arbitrary periodic function $\delta(\phi)$. It follows that quasiperiodicity is equivalent to $\int_{\mathbb{S}^{m}}w_\ast^\mathsf{T}(\phi)\left[\cdot\right]\,\mathrm{d}\phi$ lying in the left nullspace of $\Gamma_0$. The conditions \eqref{eq:normalize1} and \eqref{eq:normalize2} follow by examining the equality $Q^2(\phi,0)=Q(\phi,0)$. Finally, to show uniqueness, suppose that
\begin{equation}
\int_{\mathbb{S}^{m}}\tilde{w}^\mathsf{T}(\phi)\left[\cdot\right]\,\mathrm{d}\phi
\end{equation}
is any left nullvector of $\Gamma_0$ and let $v(\phi)=Q(\phi,0)\delta(\phi)$ for an arbitrary continuous, periodic function $\delta(\phi)$. Since $\Gamma_0$ is onto functions of this form, there exists a periodic function $u(\phi)$ such that $\Gamma_0\left[u(\phi)\right]=v(\phi)$ and, consequently, 
\begin{equation}
\int_{\mathbb{S}^{m}}\tilde{w}^\mathsf{T}(\phi)Q(\phi,0)\delta(\phi)\,\mathrm{d}\phi=\int_{\mathbb{S}^{m}}\tilde{w}^\mathsf{T}(\phi)v(\phi)\,\mathrm{d}\phi=\int_{\mathbb{S}^{m}}\tilde{w}^\mathsf{T}(\phi)\Gamma_0\left[u(\phi)\right]\,\mathrm{d}\phi=0.
\end{equation}
But since $\delta$ is arbitrary, this implies that $\tilde{w}^\mathsf{T}(\phi)Q(\phi,0)\equiv 0$ and, consequently, that $\tilde{w}(\phi)\in\mathrm{span}\{w_t(\phi),w_{\phi_1}(\phi),\ldots,w_{\phi_m}(\phi)\}$ for each $\phi$. The claim follows with the imposition of \eqref{eq:normalize1} and \eqref{eq:normalize2}.
\end{proof}

\begin{corollary}
The right nullspace of $\Gamma_0$ consists of functions that are tangential to the manifold $(\phi,t)\mapsto\gamma(\phi,t)$ at every $\phi$ and $t=0$.
\end{corollary}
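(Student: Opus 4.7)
The plan is to carry over the periodic-case argument from the proof of the analogous corollary, where the key identity was $\Gamma(0)Q(0)=\Gamma(0)$ combined with injectivity of $\Gamma(0)$ on $\Ima Q(0)$. In the quasiperiodic setting the analog is that $\Gamma_0$ respects the pointwise direct-sum decomposition $\mathbb{R}^n=\Ima Q(\phi,0)\oplus\ker Q(\phi,0)$ induced by the projection family; coupled with the assumed bijectivity of $\Gamma_0$ on the space of $\Ima Q(\cdot,0)$-valued functions, this forces the $\Ima Q$-component of any right nullvector to vanish, leaving a purely tangential function.

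The first step is to establish the commutation relation $X(\phi-\rho,1)Q(\phi-\rho,0)=Q(\phi,0)X(\phi-\rho,1)$. This follows by conjugating the defining identity $Q(\phi,t)=X(\phi,t)Q(\phi,0)X^{-1}(\phi,t)$ at $t=1$, invoking the quasiperiodicity $Q(\phi,1)=Q(\phi+\rho,0)$, and shifting $\phi\mapsto\phi-\rho$. Taking complements yields the same statement with $I_n-Q$ in place of $Q$, which is the matrix-level manifestation of the already-noted invariances $X(\phi-\rho,1)\partial_t\gamma(\phi-\rho,0)=\partial_t\gamma(\phi,0)$ and $X(\phi-\rho,1)\partial_{\phi_i}\gamma(\phi-\rho,0)=\partial_{\phi_i}\gamma(\phi,0)$. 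Together they imply that $\Gamma_0[Q(\cdot,0)\delta(\cdot)](\phi)\in\Ima Q(\phi,0)$ and $\Gamma_0[(I_n-Q(\cdot,0))\delta(\cdot)](\phi)\in\ker Q(\phi,0)$ at every $\phi$, so $\Gamma_0$ indeed respects the decomposition.

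Given $\delta$ with $\Gamma_0[\delta]=0$, I would split $\delta=Q(\cdot,0)\delta+(I_n-Q(\cdot,0))\delta$ and use $\Ima Q(\phi,0)\cap\ker Q(\phi,0)=\{0\}$ at each $\phi$ to conclude that $\Gamma_0[Q(\cdot,0)\delta(\cdot)]\equiv 0$. The bijectivity hypothesis then forces $Q(\cdot,0)\delta(\cdot)\equiv 0$, so that $\delta(\phi)\in\ker Q(\phi,0)=\mathrm{span}\{\partial_t\gamma(\phi,0),\partial_{\phi_1}\gamma(\phi,0),\ldots,\partial_{\phi_m}\gamma(\phi,0)\}$ for every $\phi$, which is precisely the tangent space to the manifold $(\phi,t)\mapsto\gamma(\phi,t)$ at $(\phi,0)$. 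The reverse containment, at least for the spanning generators $\partial_t\gamma(\cdot,0)$ and $\partial_{\phi_i}\gamma(\cdot,0)$, is immediate from the same invariance identities.

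The main obstacle I anticipate is reconciling two viewpoints that must be kept consistent: the pointwise direct-sum decomposition of $\mathbb{R}^n$ that drives the splitting of $\Gamma_0[\delta]=0$ into its $\Ima Q$ and $\ker Q$ pieces, and the global bijectivity hypothesis, stated on the function space of $\Ima Q$-valued sections via the formal duality pairings used throughout this subsection. Once the commutation with $X(\phi-\rho,1)$ is in hand, these viewpoints align and the remainder mirrors the periodic argument line-for-line after replacing the single generator $f(\gamma(0))$ by a basis of $m+1$ tangent directions.
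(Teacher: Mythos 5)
Your proposal is correct and takes essentially the same route as the paper: both rest on the commutation relation $Q(\phi,0)X(\phi-\rho,1)=X(\phi-\rho,1)Q(\phi-\rho,0)$ (equivalently $\Gamma_0\left[Q(\cdot,0)\,\delta(\cdot)\right]=Q(\cdot,0)\,\Gamma_0\left[\delta(\cdot)\right]$) together with injectivity of $\Gamma_0$ on functions of the form $\phi\mapsto Q(\phi,0)\delta(\phi)$, forcing $Q(\cdot,0)\delta\equiv0$ and hence $\delta(\phi)\in\ker Q(\phi,0)=\mathrm{span}\{\partial_t\gamma(\phi,0),\partial_{\phi_1}\gamma(\phi,0),\ldots,\partial_{\phi_m}\gamma(\phi,0)\}$. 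Your only deviations are cosmetic: you derive the commutation from the conjugated form of $Q(\phi,t)$ and quasiperiodicity instead of the term-by-term identity on the rank-one pieces, and you split $\delta$ into its $\Ima Q$ and $\ker Q$ components rather than simply applying $Q$ to $\Gamma_0[\delta]=0$ as the paper does.
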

\begin{proof}
In the same notation as in Lemma~\ref{lem:nullspacequa},
\begin{equation}
X(\phi-\rho,1)\partial_\ast\gamma(\phi-\rho,0)w_\ast^\mathsf{T}(\phi-\rho)=\partial_\ast\gamma(\phi,0)w_\ast^\mathsf{T}(\phi)X(\phi-\rho,1)
\end{equation}
and, consequently, $\Gamma_t\left[Q(\phi,t)\cdot\right]=Q(\phi,t)\Gamma_t\left[\cdot\right]$. Now suppose that $\delta(\phi)$ is a right nullvector of $\Gamma_0$. It follows that $\Gamma_0\left[Q(\phi,0)\delta(\phi)\right]=Q(\phi,0)\Gamma_0\left[\delta(\phi)\right]=0$. Since $\Gamma_0$ is one-to-one on the space of functions $\phi\mapsto Q(\phi,0)\delta(\phi)$, $Q(\phi,0)\delta(\phi)\equiv 0$ and, consequently,
\begin{equation}
\delta(\phi)=\mathrm{span}\{\partial_t \gamma(\phi,0),\partial_{\phi_1}\gamma(\phi,0),\ldots,\partial_{\phi_m}\gamma(\phi,0)\},
\end{equation}
for each $\phi$.
\end{proof}
\begin{lemma}
\label{lem:adjbvpquasi}
The functions 
\begin{equation}
\lambda_t(\phi,t):=\left(X^{-1}\right)^\mathsf{T}(\phi,t)w_t(\phi)\mbox{ and }\lambda_{\phi_i}(\phi,t):=\left(X^{-1}\right)^\mathsf{T}(\phi,t)w_{\phi_i}(\phi)
\end{equation}
for $i=1,\ldots,m$ are the unique solutions to the adjoint boundary-value problem
\begin{equation}
\label{eq:covardifftorus}
0=-\partial_t \mu^\mathsf{T}-T\mu^\mathsf{T}\mathrm{D} f(\gamma),\,0=\mu(\phi+\rho,0)-\mu(\phi,1)
\end{equation}
where
\begin{equation}
1=\int_{\mathbb{S}^{m+1}}\mu^\mathsf{T}(\phi,t)\partial_t\gamma(\phi,t)\,\mathrm{d}t\,\mathrm{d}\phi
\end{equation}
and
\begin{equation}
0=\int_{\mathbb{S}^m}\mu^\mathsf{T}(\phi,1)\partial_{\phi_j}\gamma(\phi,1)\,\mathrm{d}\phi,\,j=1,\ldots,m
\end{equation}
in the case of $\lambda_t$, and 
\begin{equation}
0=\int_{\mathbb{S}^{m+1}}\mu^\mathsf{T}(\phi,t)\partial_t\gamma(\phi,t)\,\mathrm{d}t\,\mathrm{d}\phi
\end{equation}
and
\begin{equation}
\delta_{ij}=\int_{\mathbb{S}^m}\mu^\mathsf{T}(\phi,1)\partial_{\phi_j}\gamma(\phi,1)\,\mathrm{d}\phi,\,j=1,\ldots,m
\end{equation}
in the case of $\lambda_{\phi_i}$.
\end{lemma}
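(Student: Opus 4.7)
The plan is to adapt the proof of Lemma~\ref{lem:adjbvpper} to the quasiperiodic setting, splitting the argument into verification and uniqueness, and treating the $\lambda_t$ and $\lambda_{\phi_i}$ cases in parallel by letting $w_\ast$ stand for either $w_t$ or one of the $w_{\phi_i}$.

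For verification, I would first differentiate $\lambda_\ast = X^{-1,\mathsf{T}} w_\ast$ in $t$: using $\partial_t X^{-1} = -T X^{-1} \mathrm{D}f(\gamma)$ and the fact that $w_\ast$ does not depend on $t$, this yields the adjoint PDE $\partial_t \lambda_\ast^\mathsf{T} = -T \lambda_\ast^\mathsf{T} \mathrm{D}f(\gamma)$. The matching condition $\lambda_\ast(\phi+\rho,0) = \lambda_\ast(\phi,1)$ is then immediate from the identity $w_\ast^\mathsf{T}(\phi) X^{-1}(\phi,1) = w_\ast^\mathsf{T}(\phi+\rho)$ derived inside the proof of Lemma~\ref{lem:nullspacequa} as the translation of $Q(\phi,1) = Q(\phi+\rho,0)$. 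For the integral conditions, I would invoke the invariance identities $X^{-1}(\cdot,t)\partial_t\gamma(\cdot,t) \equiv \partial_t\gamma(\cdot,0)$ and $X^{-1}(\cdot,t)\partial_{\phi_j}\gamma(\cdot,t) \equiv \partial_{\phi_j}\gamma(\cdot,0)$ established just after \eqref{eq:varprobquasi} to reduce $\lambda_\ast^\mathsf{T}\partial_t\gamma$ and $\lambda_\ast^\mathsf{T}(\phi,1)\partial_{\phi_j}\gamma(\phi,1)$ to the $t$-independent quantities $w_\ast^\mathsf{T}(\phi)\partial_t\gamma(\phi,0)$ and $w_\ast^\mathsf{T}(\phi)\partial_{\phi_j}\gamma(\phi,0)$; the pointwise normalizations \eqref{eq:normalize1}--\eqref{eq:normalize2} then deliver the claimed integral values upon integration over the unit-volume torus.

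For uniqueness, I would let $\mu$ be any solution of the given adjoint BVP and set $\tilde w(\phi) := \mu(\phi,0)$. The adjoint PDE combined with \eqref{eq:varprobquasi} gives $\partial_t(\mu^\mathsf{T} X) = 0$, so $\mu^\mathsf{T}(\phi,t) X(\phi,t) \equiv \tilde w^\mathsf{T}(\phi)$; evaluating at $t=1$ and using the matching condition produces the shift-covariance $\tilde w^\mathsf{T}(\phi+\rho) = \tilde w^\mathsf{T}(\phi) X^{-1}(\phi,1)$, which is exactly the condition placing $\int_{\mathbb{S}^m}\tilde w^\mathsf{T}(\phi)[\cdot]\,\mathrm{d}\phi$ in the left nullspace of $\Gamma_0$ (cf.\ the proof of Lemma~\ref{lem:nullspacequa}). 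Hence $\tilde w(\phi) = c_0(\phi) w_t(\phi) + \sum_{i=1}^m c_i(\phi) w_{\phi_i}(\phi)$ for some coefficient functions, and the same reductions used in the verification step convert the integral conditions on $\mu$ into $\int_{\mathbb{S}^m} c_0\,\mathrm{d}\phi = 1$ and $\int_{\mathbb{S}^m} c_i\,\mathrm{d}\phi = 0$ in the case of $\lambda_t$, or $\int_{\mathbb{S}^m} c_0\,\mathrm{d}\phi = 0$ and $\int_{\mathbb{S}^m} c_j\,\mathrm{d}\phi = \delta_{ij}$ in the case of $\lambda_{\phi_i}$.

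The main obstacle will be upgrading these integral conditions on the coefficients to pointwise equalities. I expect to handle this by substituting the expansion of $\tilde w$ into the shift-covariance identity and using the fact that each $w_\ast$ itself satisfies the same identity: combined with the pointwise linear independence of $\{w_t(\phi), w_{\phi_1}(\phi),\ldots, w_{\phi_m}(\phi)\}$ implied by \eqref{eq:normalize1}--\eqref{eq:normalize2}, this separates coefficients and forces $c_\ast(\phi+\rho) = c_\ast(\phi)$ for each $\phi$. Continuity together with the incommensurability assumption on $\rho$ (which makes the $\rho$-orbit dense on $\mathbb{S}^m$) then forces each $c_\ast$ to be constant, and the integral conditions pin these constants to the unique values yielding $\mu = \lambda_t$ or $\mu = \lambda_{\phi_i}$.
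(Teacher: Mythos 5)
Your proposal is correct and takes essentially the same approach as the paper: constancy in $t$ of $\mu^\mathsf{T}X$ and of the pairings with $\partial_t\gamma$ and $\partial_{\phi_j}\gamma$, membership of the resulting $\phi$-dependent functional in the left nullspace of $\Gamma_0$, density of the $\rho$-orbit plus continuity to force the coefficient functions constant, and the integral conditions together with Lemma~\ref{lem:nullspacequa} to pin the constants and identify $\mu$ with $\lambda_t$ or $\lambda_{\phi_i}$. Your coefficients $c_0,c_i$ are precisely the paper's $v_t,v_{\phi_i}$ (extracted by pairing $\mu$ with the tangent vectors), so the ``obstacle'' you flag is handled exactly as in the paper, which obtains the shift-invariance $v_\ast(\phi+\rho)=v_\ast(\phi)$ directly from the boundary condition and quasiperiodicity of $\gamma$ rather than through the $w$-basis expansion.
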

\begin{proof}
Let $\mu$ denote an arbitrary solution to the boundary-value problem. It follows that $\mu^\mathsf{T}(\phi,t)\partial_t\gamma(\phi,t)\equiv v_t(\phi)$, $\mu^\mathsf{T}(\phi,t)\partial_{\phi_i}\gamma(\phi,t)\equiv v_{\phi_i}(\phi)$, $i=1,\ldots,m$, and $\mu^\mathsf{T}(\phi,t)X(\phi,t)\equiv V(\phi)$ for some periodic functions $v_t(\phi)$, $v_{\phi_i}(\phi)$, $i=1,\ldots,m$, and $V(\phi)$, since differentiation with respect to $t$ followed by substitution of \eqref{eq:determtorus}, \eqref{eq:varprobquasi}, and \eqref{eq:covardifftorus} shows that the left-hand sides are independent of $t$. By quasiperiodicity,
\begin{equation}
v_t(\phi-k\rho)=\mu^\mathsf{T}(\phi,0)\partial_t\gamma(\phi,0)\mbox{ and }v_{\phi_i}(\phi-k\rho)=\mu^\mathsf{T}(\phi,0)\partial_{\phi_i}\gamma(\phi,0)
\end{equation}
for any integer $k$. Since $\rho$ is irrational, $v_t$ and $v_{\phi_i}$ must be independent of $\phi$. Additionally,
\begin{equation}
V^\mathsf{T}(\phi+\rho)X(\phi,1)-V^\mathsf{T}(\phi)=\mu^\mathsf{T}(\phi+\rho,0)X(\phi,1)-\mu^\mathsf{T}(\phi,1)X(\phi,1)=0
\end{equation}
and it follows that
\begin{equation}
\int_{\mathbb{S}^m}V^\mathsf{T}(\phi)\left[\cdot\right]\,\mathrm{d}\phi
\end{equation}
lies in the left nullspace of $\Gamma_0$.
\end{proof}
\begin{corollary}
In the notation of the previous lemma,
\begin{equation}
Q(\phi,t)=I_n-\partial_t \gamma(\phi,t) \lambda_t^\mathsf{T}(\phi,t)-\sum_{i=1}^{m}\partial_{\phi_i}\gamma(\phi,t) \lambda_{\phi_i}^\mathsf{T}(\phi,t)
\end{equation}
is a projection along $\mathrm{span}\{\partial_t \gamma(\phi,t), \partial_{\phi_1} \gamma(\phi,t),\ldots,\partial_{\phi_m} \gamma(\phi,t)  \}$ and onto a hyperplane orthogonal to $\lambda_t(\phi,t)$ and $\lambda_{\phi_i}(\phi,t)$ for $i=1,\ldots,m$.
\end{corollary}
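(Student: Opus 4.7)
The plan is to derive the stated closed-form expression for $Q(\phi,t)$ directly by rewriting the abstract definition of $Q(\phi,t)$ in terms of the adjoint vectors $\lambda_t$ and $\lambda_{\phi_i}$, and then verify the kernel/image characterization using the normalizations established in Lemma~\ref{lem:nullspacequa}.

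First, I would substitute into the definition
\begin{equation*}
Q(\phi,t)=X(\phi,t)\Bigl(I_n-\partial_t \gamma(\phi,0) w_t^\mathsf{T}(\phi)-\sum_{i=1}^{m}\partial_{\phi_i}\gamma(\phi,0) w_{\phi_i}^\mathsf{T}(\phi)\Bigr)X^{-1}(\phi,t)
\end{equation*}
the two families of identities established earlier, namely $X(\phi,t)\partial_t\gamma(\phi,0)=\partial_t\gamma(\phi,t)$ and $X(\phi,t)\partial_{\phi_i}\gamma(\phi,0)=\partial_{\phi_i}\gamma(\phi,t)$, together with the defining relations $w_t^\mathsf{T}(\phi)X^{-1}(\phi,t)=\lambda_t^\mathsf{T}(\phi,t)$ and $w_{\phi_i}^\mathsf{T}(\phi)X^{-1}(\phi,t)=\lambda_{\phi_i}^\mathsf{T}(\phi,t)$. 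Distributing through the bracketed expression yields the claimed formula for $Q(\phi,t)$.

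Next, I would confirm the geometric interpretation. Applying $Q(\phi,t)$ to $\partial_t\gamma(\phi,t)$ and to $\partial_{\phi_j}\gamma(\phi,t)$ and using the normalization conditions \eqref{eq:normalize1}--\eqref{eq:normalize2} transported from $t=0$ via $\lambda_\ast^\mathsf{T}(\phi,t)\partial_\bullet\gamma(\phi,t)=w_\ast^\mathsf{T}(\phi)\partial_\bullet\gamma(\phi,0)$ (which again follows from the two invariance identities above) shows that every tangent direction to the torus is annihilated by $Q(\phi,t)$. Symmetrically, left-multiplication by $\lambda_t^\mathsf{T}(\phi,t)$ or by $\lambda_{\phi_j}^\mathsf{T}(\phi,t)$ immediately yields zero, so the image lies in the intersection of the hyperplanes orthogonal to these adjoints. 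A dimension count (the $m+1$ tangent vectors are linearly independent because $\gamma$ parametrizes an embedded torus, and the $m+1$ adjoints are linearly independent by the normalization conditions) then gives equality of kernel and image with the stated subspaces, and idempotency $Q^2=Q$ is a direct consequence.

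The main obstacle I anticipate is purely bookkeeping: keeping track of which normalization condition from Lemma~\ref{lem:nullspacequa} is being invoked at each step and verifying that it transports cleanly from $t=0$ to arbitrary $t$ via the variational relations. There is no new analytic content beyond what is already encoded in the preceding lemma and the variational identities, so the argument is essentially algebraic.
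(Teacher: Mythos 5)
Your proposal is correct and follows essentially the same route the paper intends: the corollary is stated without proof precisely because it follows by substituting the variational identities $X(\phi,t)\partial_t\gamma(\phi,0)=\partial_t\gamma(\phi,t)$, $X(\phi,t)\partial_{\phi_i}\gamma(\phi,0)=\partial_{\phi_i}\gamma(\phi,t)$ and the definitions $\lambda_\ast^\mathsf{T}(\phi,t)=w_\ast^\mathsf{T}(\phi)X^{-1}(\phi,t)$ into the defining expression for $Q(\phi,t)$, with the kernel/image and idempotency claims coming from the normalizations \eqref{eq:normalize1}--\eqref{eq:normalize2} exactly as you describe. Your bookkeeping of how those normalizations transport from $t=0$ to general $t$ is sound for the autonomous setting in which the corollary is stated, so there is nothing missing.
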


We again investigate the geometry on a neighborhood of $\gamma$ in terms of the projections $Q$. These provide a local decomposition into state-space displacements tangential and transversal to the corresponding invariant torus.
\begin{lemma}
\label{lem:localqua}
For $x$ sufficiently close to $\gamma(\phi,t)$ for some $\phi$ and $t$, there exist unique $\psi$ (with $\psi\approx\phi$) and $\tau$ (with $\tau\approx t$), such that
\begin{equation}
x=\gamma(\psi,\tau)+x_\mathrm{tr},
\end{equation}
where $\lambda_t(\psi,\tau)^\mathsf{T}x_\mathrm{tr}=0$ and $\lambda_{\phi_i}(\psi,\tau)^\mathsf{T}x_\mathrm{tr}=0$ and, consequently, $Q(\psi,\tau)x_\mathrm{tr}=x_\mathrm{tr}$.
\end{lemma}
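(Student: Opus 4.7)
The plan is to mimic the proof of Lemma~\ref{eq:decompper} from the periodic case, replacing the single scalar implicit equation by a system of $m+1$ scalar equations to pin down the pair $(\psi,\tau)\in\mathbb{S}^m\times\mathbb{R}$. Concretely, I would define the vector-valued map $H(x,\psi,\tau)\in\mathbb{R}^{m+1}$ whose entries are $\lambda_t^\mathsf{T}(\psi,\tau)(x-\gamma(\psi,\tau))$ and $\lambda_{\phi_i}^\mathsf{T}(\psi,\tau)(x-\gamma(\psi,\tau))$ for $i=1,\ldots,m$. By construction $H(\gamma(\phi,t),\phi,t)=0$, and the goal is to invoke the implicit function theorem on $H=0$ to produce $(\psi,\tau)$ as a smooth function of $x$ near $\gamma(\phi,t)$.

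The key computation is the $(m{+}1)\times(m{+}1)$ Jacobian $\partial_{(\tau,\psi)}H$ at the reference point. At that point every factor $x-\gamma(\psi,\tau)$ vanishes, so only the derivatives of $\gamma$ through the right-hand factor survive, producing a matrix whose entries are pairings of the form $-\lambda_\ast^\mathsf{T}\partial_\ast\gamma$. Using the invariance identities $X^{-1}(\cdot,t)\partial_t\gamma(\cdot,t)\equiv\partial_t\gamma(\cdot,0)$ and $X^{-1}(\cdot,t)\partial_{\phi_i}\gamma(\cdot,t)\equiv\partial_{\phi_i}\gamma(\cdot,0)$ established at the start of Section~\ref{sec: Quasiperiodic Tori}, combined with the normalization conditions~\eqref{eq:normalize1}--\eqref{eq:normalize2} from Lemma~\ref{lem:nullspacequa}, these pairings extend from $t=0$ to every $\tau$ and collapse to $\lambda_t^\mathsf{T}\partial_t\gamma\equiv 1$, $\lambda_{\phi_i}^\mathsf{T}\partial_{\phi_j}\gamma\equiv\delta_{ij}$, and the two mixed families identically zero. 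The Jacobian therefore reduces to $-I_{m+1}$, and the implicit function theorem delivers unique $\psi\approx\phi$ and $\tau\approx t$.

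Setting $x_{\mathrm{tr}}:=x-\gamma(\psi,\tau)$, the orthogonality conditions on $x_{\mathrm{tr}}$ are simply a restatement of $H(x,\psi,\tau)=0$. The final claim $Q(\psi,\tau)x_{\mathrm{tr}}=x_{\mathrm{tr}}$ then drops out of the explicit formula for $Q$ stated in the corollary following Lemma~\ref{lem:adjbvpquasi}, since every rank-one correction term in $Q$ is a dyad whose right factor is one of the vanishing $\lambda$-pairings.

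The main obstacle I anticipate is not the implicit function theorem itself but rather verifying that the four families of pairings $\lambda_\ast^\mathsf{T}\partial_\ast\gamma$ retain their $t=0$ values for every $\tau$; this rests on transporting $w_t$ and $w_{\phi_i}$ by $X^{-1,\mathsf{T}}$ together with the two invariance identities above, which is where the nullspace/normalization structure of Lemma~\ref{lem:nullspacequa} is actually being used. A minor additional subtlety is that $\psi$ lives on $\mathbb{S}^m$ rather than $\mathbb{R}^m$, but this resolves automatically because the implicit function theorem is purely local and the torus is locally diffeomorphic to $\mathbb{R}^m$ via angular coordinates.
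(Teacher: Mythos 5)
Your proposal is correct and follows essentially the same route as the paper: the paper likewise defines the vector-valued map $H(x,\psi,\tau)$ with entries $\lambda_t^\mathsf{T}(\psi,\tau)(x-\gamma(\psi,\tau))$ and $\lambda_{\phi_i}^\mathsf{T}(\psi,\tau)(x-\gamma(\psi,\tau))$, notes $H(\gamma(\phi,t),\phi,t)=0$, and applies the implicit function theorem after evaluating the Jacobian, which hinges on exactly the pairings $\lambda_t^\mathsf{T}\partial_t\gamma\equiv 1$, $\lambda_{\phi_i}^\mathsf{T}\partial_{\phi_j}\gamma\equiv\delta_{ij}$, and vanishing mixed terms obtained by transporting $w_t,w_{\phi_i}$ with $X^{-1,\mathsf{T}}$. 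The only cosmetic difference is variable ordering: with your ordering $(\tau,\psi)$ the Jacobian is $-I_{m+1}$, while the paper's ordering $(\psi,\tau)$ gives the equally invertible anti-diagonal matrix $\bigl(\begin{smallmatrix}0 & -1\\ -I_m & 0\end{smallmatrix}\bigr)$.
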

\begin{proof}
Let
\begin{equation}\label{eq_F_torus}
H(x,\psi,\tau)=\begin{pmatrix}\lambda_t^\mathsf{T}(\psi,\tau)\left(x-\gamma(\psi,\tau)\right)\\\lambda_{\phi_1}^\mathsf{T}(\psi,\tau)\left(x-\gamma(\psi,\tau)\right)\\\vdots\\\lambda_{\phi_m}^\mathsf{T}(\psi,\tau)\left(x-\gamma(\psi,\tau)\right)\end{pmatrix}
\end{equation}
It follows that $H(\gamma(\phi,t),\phi,t)=0$ and
\begin{equation}
\partial_{\psi,\tau} H(\gamma(\phi,t),\phi,t)=\begin{pmatrix}0  & -1\\-I_m & 0 \end{pmatrix}.
\end{equation}
The claim follows from the implicit function theorem.
\end{proof}

As in the previous section, we perform formal calculations with differentials in order to capture leading-order effects in the derivation of a suitable covariance function and the corresponding boundary-value problem.

\begin{corollary}
\label{cor:transdynquasi}
For the differential $\mathrm{d}x$, Lemma~\ref{lem:localqua} implies the existence of differentials $\mathrm{d}\tau$ and $\mathrm{d}x_\mathrm{tr}$, such that 
\begin{equation}
\label{eq:x+dxtorus}
x+\mathrm{d}x=\gamma(\psi+\mathrm{d}\psi,\tau+\mathrm{d}\tau)+x_\mathrm{tr}+\mathrm{d}x_\mathrm{tr},
\end{equation}
where $\lambda_t^\mathsf{T}(\psi+\mathrm{d}\psi,\tau+\mathrm{d}\tau)(x_\mathrm{tr}+\mathrm{d}x_\mathrm{tr})=0$ and $\lambda_{\phi_i}^\mathsf{T}(\psi+\mathrm{d}\psi,\tau+\mathrm{d}\tau)(x_\mathrm{tr}+\mathrm{d}x_\mathrm{tr})=0$ for $i=1,\ldots,m$, and, consequently,
\begin{equation}
\label{eq:Q(tau+dtau)torus}
Q(\psi+\mathrm{d}\psi,\tau+\mathrm{d}\tau)(x_\mathrm{tr}+\mathrm{d}x_\mathrm{tr})=x_\mathrm{tr}+\mathrm{d}x_\mathrm{tr}.
\end{equation}
It follows that
\begin{align}
\label{eq:dx_tr in Qtorus}
\mathrm{d}x_\mathrm{tr}&-T\mathrm{D} f(\gamma(\psi,\tau))x_\mathrm{tr}\mathrm{d}\tau-\sum_{i=1}^m\partial_{\phi_i}Q(\psi,\tau)x_\mathrm{tr}\mathrm{d}\psi_i\nonumber\\
&\qquad=Q(\psi,\tau)\left(\mathrm{d}x-T\mathrm{D} f(\gamma(\psi,\tau))x_\mathrm{tr}\mathrm{d}\tau\right).
\end{align}
\end{corollary}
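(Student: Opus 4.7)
The plan is to mirror, step by step, the proof of Corollary~\ref{cor:transdynper}, replacing the single phase $\tau$ by the pair $(\psi,\tau)$ and absorbing the additional tangential contributions associated with the angular variables $\phi_1,\ldots,\phi_m$. By Lemma~\ref{lem:localqua}, the implicit function theorem applied to $H$ in \eqref{eq_F_torus} yields differentials $\mathrm{d}\psi$, $\mathrm{d}\tau$, and $\mathrm{d}x_\mathrm{tr}$ that realize \eqref{eq:x+dxtorus} while maintaining the orthogonality conditions. The two independent first-order consequences of this representation will then be combined to isolate $\mathrm{d}x_\mathrm{tr}$.

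First, I would differentiate \eqref{eq:x+dxtorus} to obtain
\begin{equation*}
\mathrm{d}x_\mathrm{tr}=\mathrm{d}x-Tf(\gamma(\psi,\tau))\,\mathrm{d}\tau-\sum_{i=1}^m\partial_{\phi_i}\gamma(\psi,\tau)\,\mathrm{d}\psi_i,
\end{equation*}
using $\partial_t\gamma=Tf(\gamma)$. Next, from \eqref{eq:Q(tau+dtau)torus} and the baseline identity $Q(\psi,\tau)x_\mathrm{tr}=x_\mathrm{tr}$, expansion to leading order gives
\begin{equation*}
\mathrm{d}x_\mathrm{tr}=\partial_tQ(\psi,\tau)x_\mathrm{tr}\,\mathrm{d}\tau+\sum_{i=1}^m\partial_{\phi_i}Q(\psi,\tau)x_\mathrm{tr}\,\mathrm{d}\psi_i+Q(\psi,\tau)\,\mathrm{d}x_\mathrm{tr},
\end{equation*}
which is the torus analogue of \eqref{eq:dx_tr2}.

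The key step, analogous to \eqref{eq:Qdotx_tr}, is to evaluate $\partial_tQ(\psi,\tau)x_\mathrm{tr}$. Differentiating the definition of $Q$ in $t$ and invoking $\partial_t\partial_t\gamma=T\mathrm{D}f(\gamma)\partial_t\gamma$, $\partial_t\partial_{\phi_i}\gamma=T\mathrm{D}f(\gamma)\partial_{\phi_i}\gamma$, together with the adjoint equations $\partial_t\lambda_t^\mathsf{T}=-T\lambda_t^\mathsf{T}\mathrm{D}f(\gamma)$ and $\partial_t\lambda_{\phi_i}^\mathsf{T}=-T\lambda_{\phi_i}^\mathsf{T}\mathrm{D}f(\gamma)$ from Lemma~\ref{lem:adjbvpquasi}, yields the structural identity
\begin{equation*}
\partial_tQ(\psi,\tau)=-T\mathrm{D}f(\gamma)\bigl(I_n-Q(\psi,\tau)\bigr)+T\bigl(I_n-Q(\psi,\tau)\bigr)\mathrm{D}f(\gamma).
\end{equation*}
Since $\bigl(I_n-Q(\psi,\tau)\bigr)x_\mathrm{tr}=0$, it follows that $\partial_tQ(\psi,\tau)x_\mathrm{tr}=T\bigl(I_n-Q(\psi,\tau)\bigr)\mathrm{D}f(\gamma(\psi,\tau))x_\mathrm{tr}$.

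Finally, I would substitute this expression into the second display, move the $\sum_i\partial_{\phi_i}Q\,x_\mathrm{tr}\,\mathrm{d}\psi_i$ and the $T\mathrm{D}f(\gamma)x_\mathrm{tr}\,\mathrm{d}\tau$ terms to the left-hand side, and rewrite the right-hand side as $Q(\psi,\tau)\bigl(\mathrm{d}x_\mathrm{tr}-T\mathrm{D}f(\gamma)x_\mathrm{tr}\,\mathrm{d}\tau\bigr)$. Replacing $\mathrm{d}x_\mathrm{tr}$ here by its expression from the first display and using $Q(\psi,\tau)\partial_t\gamma(\psi,\tau)=0$ and $Q(\psi,\tau)\partial_{\phi_i}\gamma(\psi,\tau)=0$ collapses the tangential contributions and produces \eqref{eq:dx_tr in Qtorus}. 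The main obstacle is the bookkeeping in computing $\partial_tQ$, because more tangential directions contribute and each term requires simultaneous use of the variational equation \eqref{eq:varprobquasi} and the family of adjoint equations; the computation is otherwise routine once the correct cancellations driven by $Q\,\partial_t\gamma=0$ and $Q\,\partial_{\phi_i}\gamma=0$ are organized.
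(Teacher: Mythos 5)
Your proposal is correct and follows essentially the same route as the paper's proof: differentiate the decomposition to get $\mathrm{d}x_\mathrm{tr}=\mathrm{d}x-\partial_t\gamma\,\mathrm{d}\tau-\sum_i\partial_{\phi_i}\gamma\,\mathrm{d}\psi_i$, expand the projection identity to get the second relation, compute $\partial_tQ\,x_\mathrm{tr}=T(I_n-Q)\mathrm{D}f(\gamma)x_\mathrm{tr}$ via the variational and adjoint equations (your identity $\partial_tQ=-T\mathrm{D}f(\gamma)(I_n-Q)+T(I_n-Q)\mathrm{D}f(\gamma)$ is exactly the paper's expression written compactly), and substitute using $Q\,\partial_t\gamma=0$ and $Q\,\partial_{\phi_i}\gamma=0$. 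No gaps.
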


\begin{proof}
Eq.~\eqref{eq:x+dxtorus} implies that
\begin{equation}
\label{eq:dx_tr1torus}
\mathrm{d}x_\mathrm{tr}=\mathrm{d}x-\partial_t\gamma(\psi,\tau)\mathrm{d}\tau-\sum_{i=1}^m\partial_{\phi_i}\gamma(\psi,\tau)\mathrm{d}\psi_i.
\end{equation}
Similarly, from \eqref{eq:Q(tau+dtau)torus}, we obtain
\begin{equation}
\label{eq:dx_tr2torus}
\mathrm{d}x_\mathrm{tr}=\partial_tQ(\psi,\tau)x_\mathrm{tr}\mathrm{d}\tau+\sum_{i=1}^m\partial_{\phi_i}Q(\psi,\tau)x_\mathrm{tr}\mathrm{d}\psi_i+Q(\psi,\tau)\mathrm{d}x_\mathrm{tr}.
\end{equation}
From the definition of $Q$ it follows that
\begin{align}
\partial_tQ(\psi,\tau)&=-T\mathrm{D} f(\gamma(\psi,\tau)) \left(\partial_t\gamma(\psi,\tau)\lambda_t^\mathsf{T}(\psi,\tau)+\sum_{i=1}^m\partial_{\phi_i}\gamma(\psi,\tau)\lambda_{\phi_i}^\mathsf{T}(\psi,\tau)\right)\nonumber\\
&\qquad+T\left(\partial_t\gamma(\psi,\tau)\lambda_t^\mathsf{T}(\psi,\tau)+\sum_{i=1}^m \partial_{\phi_i}\gamma(\psi,\tau)\lambda_{\phi_i}^\mathsf{T}(\psi,\tau)\right)\mathrm{D} f(\gamma(\psi,\tau))
\end{align}
and, consequently, that
\begin{equation}
\label{eq:Qdotx_trtorus} 
\partial_tQ(\psi,\tau)x_\mathrm{tr}=T(I_n-Q(\psi,\tau))\mathrm{D} f(\gamma(\psi,\tau))x_\mathrm{tr},
\end{equation}
since $\lambda_t^\mathsf{T}(\psi,\tau)x_\mathrm{tr}=0$ and $\lambda_{\phi_i}^\mathsf{T}(\psi,\tau)x_\mathrm{tr}=0$ for $i=1,\ldots,m$. Substitution of \eqref{eq:dx_tr1torus} and \eqref{eq:Qdotx_trtorus} into the right-hand side of \eqref{eq:dx_tr2torus} yields \eqref{eq:dx_tr in Qtorus}, since $Q(\psi,\tau)\partial_t\gamma(\psi,\tau)=0$ and $Q(\psi,\tau)\partial_{\phi_i}\gamma(\psi,\tau)=0$ for $i=1,\ldots,m$.
\end{proof}

\subsubsection{The covariance boundary-value problem}
In lieu of the deterministic dynamics in \eqref{eq:determtorus}, consider again the It\^{o} SDE
\begin{equation}
\label{eq:sde}
\mathrm{d}x=Tf(x)\mathrm{d}t+\sigma \sqrt{T}F(x)\mathrm{d}W_{t}
\end{equation} 
in terms of the noise intensity $\sigma$, and a vector $W_{t}\in\mathbb{R}^m$ of independent standard Brownian motions. In the notation of the preceding lemmas, and as before under a small noise assumption and up to leading-order terms, the following proposition holds.

\begin{proposition}
For $x$ sufficiently close to $\gamma(\phi,t)$ for some $\phi$ and $t$, to first order in $\|x_\mathrm{tr}\|$ and $\sigma$,
\begin{equation}
\label{eq:OUprocesstorus}
\mathrm{d}x_\mathrm{tr}=T\mathrm{D} f(\gamma(\psi,\tau))x_\mathrm{tr}\mathrm{d}\tau+\sigma \sqrt{T}Q(\psi,\tau)F(\gamma(\psi,\tau))\mathrm{d}W_{\tau}.
\end{equation}
\end{proposition}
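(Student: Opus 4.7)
The plan is to mirror the proof of the analogous periodic-orbit proposition, promoting the single tangential coordinate to the vector $(\tau,\psi)$ with $m+1$ components. I would start from the deterministic limit $x_\mathrm{tr}=0$, $\sigma=0$, where Lemma~\ref{lem:localqua} forces $\psi=\phi$, $\tau=t$, and $\mathrm{d}x_\mathrm{tr}=0$. Inserting the SDE \eqref{eq:sde} into the tangential decomposition \eqref{eq:dx_tr1torus}, and invoking the identity $\partial_t\gamma(\psi,\tau)=Tf(\gamma(\psi,\tau))$ obtained from \eqref{eq:determtorus} applied to $x(t)=\gamma(\phi,t)$, yields at zeroth order
\begin{equation}
0=Tf(\gamma(\psi,\tau))(\mathrm{d}t-\mathrm{d}\tau)-\sum_{i=1}^{m}\partial_{\phi_i}\gamma(\psi,\tau)\mathrm{d}\psi_i.
\end{equation}
By the normal hyperbolicity hypothesis, the kernel of $Q(\psi,\tau)$ is exactly the $(m+1)$-dimensional tangent space to the torus spanned by $\partial_t\gamma,\partial_{\phi_1}\gamma,\ldots,\partial_{\phi_m}\gamma$, so these vectors are pointwise linearly independent. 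This independence forces $\mathrm{d}\tau=\mathrm{d}t$ and $\mathrm{d}\psi_i=0$ to zeroth order in $\|x_\mathrm{tr}\|$ and $\sigma$.

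With the leading-order tangential coordinates pinned down, the next step is to feed these back into the first-order identity \eqref{eq:dx_tr in Qtorus} of Corollary~\ref{cor:transdynquasi}. Because $\mathrm{d}\psi_i$ is itself $O(\|x_\mathrm{tr}\|,\sigma)$ (a first-order correction to zero), each term $\partial_{\phi_i}Q(\psi,\tau)x_\mathrm{tr}\mathrm{d}\psi_i$ is second-order and I would discard it. Substituting the SDE \eqref{eq:sde} in the form $\mathrm{d}x=Tf(\gamma)\mathrm{d}t+\sigma\sqrt{T}F(\gamma)\mathrm{d}W_t+O(\|x_\mathrm{tr}\|)$ into the right-hand side, the drift contribution $Q(\psi,\tau)Tf(\gamma(\psi,\tau))\mathrm{d}t$ vanishes because $Q$ annihilates $\partial_t\gamma=Tf(\gamma)$. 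Using $\mathrm{d}\tau=\mathrm{d}t$ (so that $\mathrm{d}W_\tau$ and $\mathrm{d}W_t$ are interchangeable at this order), what remains is exactly
\begin{equation}
\mathrm{d}x_\mathrm{tr}=T\mathrm{D}f(\gamma(\psi,\tau))x_\mathrm{tr}\mathrm{d}\tau+\sigma\sqrt{T}Q(\psi,\tau)F(\gamma(\psi,\tau))\mathrm{d}W_\tau.
\end{equation}

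The main obstacle will be keeping the multi-index book-keeping honest, in particular verifying that the tangent vectors $\partial_t\gamma$ and $\partial_{\phi_i}\gamma$ really are linearly independent rather than needing a separate assumption; the clean route is to appeal to the structure of $Q$ used in the definition of normal hyperbolicity, together with the normalization conditions \eqref{eq:normalize1}--\eqref{eq:normalize2} of Lemma~\ref{lem:nullspacequa}, which pair the $\lambda_*$ with the tangent vectors in a biorthogonal manner and hence preclude any nontrivial tangent-space degeneracy. A secondary point to dispatch is justification for discarding It\^o corrections: since the diffusion coefficient carries a factor $\sigma$ and the leading change-of-variable formula would contribute a term of order $\sigma^2$, such corrections fall below the $O(\sigma)$ truncation we retain, consistent with the convention stated in the paragraph preceding Eq.~\eqref{eq:sde_first}.
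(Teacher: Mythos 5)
Your proposal is correct and follows essentially the same route as the paper's own proof: pin down $\mathrm{d}\tau=\mathrm{d}t$ and $\mathrm{d}\psi_i=0$ at zeroth order from the decomposition \eqref{eq:dx_tr1torus}, then substitute the SDE into Corollary~\ref{cor:transdynquasi}, using $Q(\psi,\tau)\partial_t\gamma(\psi,\tau)=0$ and discarding the second-order terms (including $\partial_{\phi_i}Q\,x_\mathrm{tr}\,\mathrm{d}\psi_i$ and the It\^o correction). Your added remarks on the biorthogonality from Lemma~\ref{lem:nullspacequa} guaranteeing independence of the tangent vectors simply make explicit what the paper leaves implicit.
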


\begin{proof}
For $x_\mathrm{tr}=0$ and $\sigma=0$, we obtain $\tau=t$, $\psi=\phi$, and $\mathrm{d}x_\mathrm{tr}=0$. In this limit, \eqref{eq:sde} and \eqref{eq:dx_tr1torus} imply that
\begin{equation}
0=\partial_t\gamma(\psi,\tau)(\mathrm{d}t-\mathrm{d}\tau)-\sum_{i=1}^m\partial_{\phi_i}\gamma(\psi,\tau)\mathrm{d}\psi_i,
\end{equation}
from which it follows that $\mathrm{d}\tau=\mathrm{d}t$ and $\mathrm{d}\psi_i=0$, $i=1,\ldots,m$, to zeroth order in $\|x_\mathrm{tr}\|$ and $\sigma$. To first order in $\|x_\mathrm{tr}\|$ and $\sigma$, \eqref{eq:sde} then yields
\begin{equation}
Q(\psi,\tau)\left(\mathrm{d}x-T\mathrm{D} f(\gamma(\psi,\tau))x_\mathrm{tr}\mathrm{d}\tau\right)=\sigma\sqrt{T}Q(\psi,\tau) F(\gamma(\psi,\tau))\mathrm{d}W_\tau
\end{equation}
and substitution in \eqref{eq:dx_tr in Qtorus} yields the claimed result.
\end{proof}

The Ornstein-Uhlenbeck process  in \eqref{eq:OUprocesstorus} approximates the local dynamics near $\gamma$ for orbits that remain near $\gamma$ to leading order but neglects cumulative effects of higher-order terms, e.g., differences between $\mathrm{d}t$ and $\mathrm{d}\tau$ and higher-order terms in $\mathrm{d}\psi$. Given an initial condition $x_\mathrm{tr}(\psi,0)$, \eqref{eq:OUprocesstorus} may be solved explicitly to yield
\begin{align}
x_\mathrm{tr}(\psi,\tau)&=X(\psi,\tau)x_\mathrm{tr}(\psi,0)+\sigma\sqrt{T}X(\psi,\tau)\int_{0}^{\tau}X^{-1}(\psi,s)Q(\psi,s)F(\gamma(\psi,s)))\mathrm{d}W_{s}\nonumber\\
    &=X(\psi,\tau)x_\mathrm{tr}(\psi,0)+\sigma\sqrt{T}Q(\psi,\tau)X(\psi,\tau)\int_{0}^{\tau}G(\psi,s)\mathrm{d}W_{s},
\end{align}
where $G(\psi,s)=X^{-1}(\psi,s)F(\gamma(\psi,s))$. With
\begin{equation}
\Lambda(\psi,\tau):=\begin{pmatrix}\lambda_t(\psi,\tau) & \lambda_{\phi_1}(\psi,\tau) & \cdots & \lambda_{\phi_m}(\psi,\tau)\end{pmatrix}
\end{equation}
and
\begin{equation}
\Omega(\psi):=\begin{pmatrix}w_t(\psi) & w_{\phi_1}(\psi) & \cdots & w_{\phi_m}(\psi)\end{pmatrix}
\end{equation}
it follows that $\Lambda^\mathsf{T}(\psi,\tau)x_\mathrm{tr}(\psi,\tau)\equiv 0$ provided that $\Omega^\mathsf{T}(\psi)x_\mathrm{tr}(\psi,0)=0$. The rescaled covariance matrix
\begin{equation}
\label{eq:cov_deftorus}
    C:=\frac{1}{\sigma^2}\mathbb{E}\left[ x_\mathrm{tr}x_\mathrm{tr}^{\mathrm{T}}\right]
\end{equation}
is then given by
\begin{align}
\label{eq:Ctr_integraltorus}
C(\psi,\tau)&=X(\psi,\tau)C(\psi,0)X^\mathsf{T}(\psi,\tau)\nonumber\\
&\qquad +TQ(\psi,\tau)X(\psi,\tau)\left(\int_{0}^{\tau}G(\psi,s)G^\mathsf{T}(\psi,s)\mathrm{d}s\right)X^\mathsf{T}(\psi,\tau)Q^\mathsf{T}(\psi,\tau),
\end{align}
It follows that $\Lambda^\mathsf{T}(\psi,\tau)C(\psi,\tau)\Lambda(\psi,\tau)$ is independent of $\tau$ and equals $0$ if and only if $\Omega(\psi)^\mathsf{T}C(\psi,0)\Omega(\psi)=0$.
\begin{lemma}
Let
\begin{equation}
\mathcal{I}(\psi):=\int_0^1G(\psi,s)G^\mathsf{T}(\psi,s)\mathrm{d}s
\end{equation}
Then, for arbitrary $C(\psi,0)$,
\begin{align}
&C(\psi,k+1)=X(\psi+k\rho,1)C(\psi,k)X^\mathsf{T}(\psi+k\rho,1)\nonumber\\
&\qquad+TQ(\psi+k\rho,1)X(\psi+k\rho,1)\mathcal{I}(\psi+k\rho)X^\mathsf{T}(\psi+k\rho,1)Q^\mathsf{T}(\psi+k\rho,1).
\end{align}
\end{lemma}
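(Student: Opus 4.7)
The plan is to mirror the reasoning used for the periodic orbit case by starting from the explicit integral representation in \eqref{eq:Ctr_integraltorus} evaluated at $\tau=k+1$ and reducing the result to the sum $X(\psi+k\rho,1)C(\psi,k)X^\mathsf{T}(\psi+k\rho,1)$ plus the claimed forcing term. The three key structural identities I will invoke are the cocycle identity $X(\psi,k+1)=X(\psi+k\rho,1)X(\psi,k)$, which follows by induction on $k$ from the quasiperiodicity $X(\psi,t+1)=X(\psi+\rho,t)X(\psi,1)$; the analogous $Q(\psi,k+1)=Q(\psi+k\rho,1)$, which follows by iterating the corresponding quasiperiodicity relation for $Q$; and the invariance $Q(\psi,\tau)X(\psi,\tau)=X(\psi,\tau)Q(\psi,0)$, which is immediate from the defining formula for $Q(\psi,\tau)$.

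First I would split the integral in \eqref{eq:Ctr_integraltorus} at $\tau=k+1$ as $\int_0^{k+1}=\int_0^k+\int_k^{k+1}$ and apply the cocycle identities to the prefactors $X(\psi,k+1)$ and $Q(\psi,k+1)$. For the contribution of $\int_0^k$, I would use the invariance to rewrite $Q(\psi+k\rho,1)X(\psi+k\rho,1)=X(\psi+k\rho,1)Q(\psi,k)$, pulling $X(\psi+k\rho,1)$ and its transpose outside; combined with the first term of \eqref{eq:Ctr_integraltorus} (with the same $X(\psi,k+1)$ factored in the same way), this precisely reassembles $X(\psi+k\rho,1)C(\psi,k)X^\mathsf{T}(\psi+k\rho,1)$.

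For the $\int_k^{k+1}$ piece, I would substitute $u=s-k$ and use the identity $G(\psi,u+k)=X^{-1}(\psi,k)G(\psi+k\rho,u)$, which follows by combining the quasiperiodicity $\gamma(\psi,u+k)=\gamma(\psi+k\rho,u)$, the cocycle identity for $X$, and the definition $G(\psi,s)=X^{-1}(\psi,s)F(\gamma(\psi,s))$. This converts the remaining integral into $X^{-1}(\psi,k)\,\mathcal{I}(\psi+k\rho)\,(X^{-1}(\psi,k))^\mathsf{T}$, after which the two $X(\psi,k)$ factors cancel against those produced by $X(\psi,k+1)X^{-1}(\psi,k)=X(\psi+k\rho,1)$, leaving exactly the claimed forcing term.

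I do not expect any genuine conceptual difficulty: once the cocycle and invariance identities are in hand, the entire reduction is linear-algebraic manipulation exactly parallel to the periodic case. The main risk, and the place I would double-check, is the bookkeeping of simultaneous $\psi$- and $t$-shifts across $X$, $Q$, $G$, and $\mathcal{I}$; I would perform the reduction symbolically and verify at the end that the arguments of every factor match those appearing in the statement of the lemma.
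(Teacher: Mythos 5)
Your proposal is correct and follows essentially the same route as the paper: evaluate \eqref{eq:Ctr_integraltorus} at $\tau=k+1$, split the integral at $s=k$, and use the cocycle/quasiperiodicity relations together with the identity $\mathcal{I}(\psi+k\rho)=X(\psi,k)\bigl(\int_k^{k+1}G(\psi,s)G^\mathsf{T}(\psi,s)\,\mathrm{d}s\bigr)X^\mathsf{T}(\psi,k)$, which is exactly the paper's "by the definition of $G$ \ldots the claim follows by substitution" step. You merely spell out the bookkeeping (the cocycle identity for $X$, quasiperiodicity of $Q$, and the invariance $Q(\psi,\tau)X(\psi,\tau)=X(\psi,\tau)Q(\psi,0)$) that the paper leaves implicit, and all of it checks out.
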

\begin{proof}
By \eqref{eq:Ctr_integraltorus},
\begin{align}
&C(\psi,k)=X(\psi,k)C(\psi,0)X^\mathsf{T}(\psi,k)\nonumber\\
&\qquad+TQ(\psi,k)X(\psi,k)\left(\int_0^{k}G(\psi,s)G^\mathsf{T}(\psi,s)\mathrm{d}s\right)X^\mathsf{T}(\psi,k)Q^\mathsf{T}(\psi,k).
\end{align}
Moreover, by the definition of $G$,
\begin{equation}
\mathcal{I}(\psi+k\rho)=X(\psi,k)\left(\int_k^{k+1}G(\psi,s)G^\mathsf{T}(\psi,s)\mathrm{d}s\right)X^\mathsf{T}(\psi,k).
\end{equation}
The claim follows by substitution.
\end{proof}

We proceed to assume that the quasiperiodic orbit is \textit{transversally stable}, such that
\begin{align}
X(\psi,k)&=\partial_t\gamma(\psi+k\rho,0))w_t^\mathsf{T}(\psi)\nonumber\\
&\qquad+\sum_{i=1}^m\partial_{\phi_i}\gamma(\psi+k\rho,0))w_{\phi_i}^\mathsf{T}(\psi)+\mathcal{O}\left(\mathrm{exp}(-k/\tau_{\mathrm{tr}})\right)
\end{align}
for some positive constant $\tau_\mathrm{tr}$ and large $k$. Then,
\begin{proposition}
Given a constant matrix $B$, there exists an initial condition $C(\psi,0)$ with $\Omega(\psi)^\mathsf{T}C(\psi,0)\Omega(\psi)=B$, such that $C(\psi,k)=C(\psi+k\rho,0)$ for all $k$.
\end{proposition}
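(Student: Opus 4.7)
The plan is to mirror Proposition~\ref{prop: perorbseries}, replacing the monodromy iteration by the quasiperiodic product $X(\psi, k)$ and the scalar index shift by the phase translation $\psi \mapsto \psi + \rho$. First I would introduce the tangent matrix
\begin{equation*}
V(\psi) := \begin{pmatrix}\partial_t \gamma(\psi, 0) & \partial_{\phi_1}\gamma(\psi, 0) & \cdots & \partial_{\phi_m}\gamma(\psi, 0)\end{pmatrix},
\end{equation*}
which is biorthogonal to $\Omega(\psi)$ in the sense that $\Omega^\mathsf{T}(\psi) V(\psi) = I_{m+1}$ by Lemma~\ref{lem:nullspacequa}; it then follows that $Q(\psi, 0) V(\psi) = 0$ and $\Omega^\mathsf{T}(\psi) Q(\psi, 0) = 0$. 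I would then posit the explicit ansatz
\begin{equation*}
C(\psi, 0) = V(\psi) B V^\mathsf{T}(\psi) + T \sum_{n=1}^{\infty} Q(\psi, 0) X(\psi - n\rho, n) \mathcal{I}(\psi - n\rho) X^\mathsf{T}(\psi - n\rho, n) Q^\mathsf{T}(\psi, 0),
\end{equation*}
so that $\Omega^\mathsf{T}(\psi) C(\psi, 0) \Omega(\psi) = B$ holds by inspection.

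Next I would verify the base case $C(\psi, 1) = C(\psi + \rho, 0)$ using three flow identities: first, $X(\psi, 1) V(\psi) = V(\psi + \rho)$, which follows from quasiperiodicity of $\gamma$ combined with the identity $X^{-1}(\cdot, t) \partial_{\ast} \gamma(\cdot, t) \equiv \partial_{\ast} \gamma(\cdot, 0)$ established earlier in this section; second, the commutation $X(\psi, 1) Q(\psi, 0) = Q(\psi + \rho, 0) X(\psi, 1)$, which combines invariance of $Q$ under the linearized flow with the quasiperiodicity $Q(\psi, 1) = Q(\psi + \rho, 0)$; and third, the telescoping relation $X(\psi, 1) X(\psi - n\rho, n) = X(\psi - n\rho, n+1)$ obtained by iterating the composition law $X(\phi, t+1) = X(\phi + \rho, t) X(\phi, 1)$. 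Substituting the ansatz into the recursion of the preceding lemma at $k=0$, applying these three identities, and reindexing the resulting sum by $m = n+1$ absorbs the leftover inhomogeneous term precisely into the missing $m=1$ summand. Induction on $k$ using the same recursion with $\psi$ replaced by $\psi + k\rho$ then extends the conclusion to $C(\psi, k) = C(\psi + k\rho, 0)$ for every non-negative integer $k$.

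The step I expect to be the main obstacle is establishing absolute convergence and the smoothness of the defining series. The transversal stability assumption furnishes the expansion $X(\psi - n\rho, n) = V(\psi) \Omega^\mathsf{T}(\psi - n\rho) + \mathcal{O}(\exp(-n/\tau_{\mathrm{tr}}))$; since $Q(\psi, 0) V(\psi) = 0$, the leading rank-$(m+1)$ part is annihilated by the projectors on both sides, so each summand is bounded uniformly in $\psi$ by a constant multiple of $\exp(-2n/\tau_{\mathrm{tr}})$. This secures absolute and uniform convergence of the series, together with its termwise $\psi$-derivatives, and thereby the smooth quasiperiodic dependence of $C(\psi, 0)$ needed for the inductive step to be well-posed on the torus.
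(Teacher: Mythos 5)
Your proposal is correct and follows essentially the same route as the paper: your $V(\psi)$ is the paper's $\nabla(\psi,0)$, your ansatz coincides with the paper's explicit series for $C(\psi,0)$, and the verification via the recursion of the preceding lemma (base case $C(\psi,1)=C(\psi+\rho,0)$, then iteration with shifted phase) is exactly the paper's argument. Your extra care in spelling out the flow identities and the uniform exponential bound on the summands after projection merely fills in details the paper leaves implicit.
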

\begin{proof}
The previous lemma shows that $C(\psi,1)=C(\psi+\rho,0)\Rightarrow C(\psi,k)=C(\psi+k\rho,0)$ for all $k$. Suppose that
\begin{align}
\label{eq:C0explicittorus}
C(\psi,0)&=T\sum_{n=1}^\infty Q(\psi,0)X(\psi-n\rho,n)\mathcal{I}(\psi-n\rho)X^\mathsf{T}(\psi-n\rho,n)Q^\mathsf{T}(\psi,0)\nonumber\\
&\quad+\nabla(\psi,0)B\nabla^\mathsf{T}(\psi,0).
\end{align}
where
\begin{equation}
\nabla(\psi,\tau):=\begin{pmatrix}\partial_t\gamma(\psi,\tau) & \partial_{\phi_1}\gamma(\psi,\tau) & \cdots & \partial_{\phi_m}\gamma(\psi,\tau)\end{pmatrix}
\end{equation}
The series converges by the assumption of transversal stability. Moreover,
\begin{align}
&X(\psi,1)C(\psi,0)X^\mathsf{T}(\psi,1)=C(\psi+\rho,0)\nonumber\\
&\qquad-TQ(\psi+\rho,0)X(\psi,1)\mathcal{I}(\psi)X^\mathsf{T}(\psi,1)Q^\mathsf{T}(\psi+\rho,0)
\end{align}
and the claim follows by evaluation of \eqref{eq:Ctr_integraltorus} at $\tau=1$.
\end{proof}
\begin{corollary}
Given a constant matrix $B$, the function $C(\psi,\tau)$ in \eqref{eq:Ctr_integraltorus} with $\Omega(\psi)^\mathsf{T}C(\psi,0)\Omega(\psi)=\beta$ converges to the function obtained with $C(\psi,0)$ given by \eqref{eq:C0explicittorus} as $\tau\rightarrow\infty$.
\end{corollary}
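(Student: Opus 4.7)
The plan is to compare an arbitrary $C(\psi,\tau)$ satisfying the boundary data $\Omega^\mathsf{T}(\psi)C(\psi,0)\Omega(\psi)=B$ against the distinguished solution $C_\star(\psi,\tau)$ generated by the initial condition \eqref{eq:C0explicittorus}, and to show that the difference decays to zero as $\tau\to\infty$. The argument mirrors the convergence claim following Proposition~\ref{prop: perorbseries} in the periodic case.

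First, I would form the difference $D(\psi,\tau):=C(\psi,\tau)-C_\star(\psi,\tau)$ and exploit the fact that the stochastic forcing contribution in \eqref{eq:Ctr_integraltorus} does not involve the initial condition $C(\psi,0)$. Subtracting the two integral representations therefore produces the clean identity
\begin{equation}
D(\psi,\tau)=X(\psi,\tau)\,D(\psi,0)\,X^\mathsf{T}(\psi,\tau).
\end{equation}
Moreover, since $\Omega^\mathsf{T}(\psi)C(\psi,0)\Omega(\psi)=B=\Omega^\mathsf{T}(\psi)C_\star(\psi,0)\Omega(\psi)$, the tangential--tangential block of $D(\psi,0)$ vanishes identically in $\psi$:
\begin{equation}
\Omega^\mathsf{T}(\psi)\,D(\psi,0)\,\Omega(\psi)=0.
\end{equation}

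Next, I would invoke the transversal stability expansion $X(\psi,k)=\nabla(\psi+k\rho,0)\,\Omega^\mathsf{T}(\psi)+\mathcal{O}\bigl(e^{-k/\tau_\mathrm{tr}}\bigr)$ for large integer $k$, which is precisely the hypothesis guaranteeing convergence of \eqref{eq:C0explicittorus}. Substituting this expansion into the previous identity and expanding the product yields four contributions: the rank-$(m+1)$ leading term $\nabla(\psi+k\rho,0)\bigl[\Omega^\mathsf{T}(\psi)D(\psi,0)\Omega(\psi)\bigr]\nabla^\mathsf{T}(\psi+k\rho,0)$ vanishes by the tangential--tangential annihilation just established, while each of the remaining three cross terms carries at least one factor of size $\mathcal{O}(e^{-k/\tau_\mathrm{tr}})$. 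Hence $D(\psi,k)=\mathcal{O}(e^{-k/\tau_\mathrm{tr}})$ uniformly in $\psi\in\mathbb{S}^m$. The extension to non-integer $\tau$ follows by writing $\tau=k+s$ with $s\in[0,1]$, using the cocycle identity $X(\psi,\tau)=X(\psi+k\rho,s)X(\psi,k)$, and invoking the uniform boundedness of $X(\psi+k\rho,s)$ on the compact set $\mathbb{S}^m\times[0,1]$.

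The main obstacle is verifying that the asymptotic expansion of $X(\psi,k)$ really does hold uniformly in $\psi$, so that the decay of $D(\psi,\tau)$ is genuinely a norm-wise statement rather than merely pointwise. This uniformity is implicit in the convergence of the series \eqref{eq:C0explicittorus} but merits explicit justification; reassuringly, no small-denominator difficulties intrude here, since the decay is measured in forward integer time along transversally stable directions and does not probe the Diophantine properties of the rotation vector $\rho$.
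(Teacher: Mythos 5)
Your argument is correct: the paper states this corollary without proof (as it does the analogous corollary after Proposition~\ref{prop: perorbseries} in the periodic case), and the justification you give is exactly the intended one---the difference of two solutions of \eqref{eq:Ctr_integraltorus} evolves as $X(\psi,\tau)D(\psi,0)X^\mathsf{T}(\psi,\tau)$, the matching tangential data $\Omega^\mathsf{T}(\psi)D(\psi,0)\Omega(\psi)=0$ annihilates the non-decaying rank-$(m+1)$ part of the transversal-stability expansion of $X(\psi,k)$, and the cocycle identity $X(\psi,k+s)=X(\psi+k\rho,s)X(\psi,k)$ with compactness of $\mathbb{S}^m\times[0,1]$ handles non-integer $\tau$. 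Your closing remark is also apt: uniformity in $\psi$ of the $\mathcal{O}(e^{-k/\tau_\mathrm{tr}})$ bound is left implicit in the paper's definition of transversal stability, and no Diophantine condition on $\rho$ is needed for this forward-time decay.
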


\begin{proposition}
\label{prop: qp_covar_bv}
Let $C_\mathrm{qper}$ denote the function in \eqref{eq:Ctr_integraltorus} obtained with $C(\psi,0)$ given by \eqref{eq:C0explicittorus} when $B=0$. It follows that, when the matrix $A=0$, $C_\mathrm{qper}$ is the unique quasiperiodic solution of the quasiperiodic Lyapunov equation (cf.~\cite{Halanay1987})
\begin{align}
\partial_tC&=T\left(\mathrm{D} f(\gamma)C+C\mathrm{D} f^\mathsf{T}(\gamma)+QF(\gamma)F^\mathsf{T}(\gamma)Q^\mathsf{T}+\nabla A\nabla^\mathsf{T}\right),
\end{align}
for which $\Omega(\psi)^\mathsf{T}C(\psi,0)\Omega(\psi)=0$ and that no quasiperiodic solution exists otherwise.
\end{proposition}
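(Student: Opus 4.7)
The plan is to mirror the proof of Proposition~\ref{prop: perorbbvp} while carefully tracking the role of the shift operator $\psi\mapsto\psi+\rho$. First I would establish the necessity of $A=0$ by differentiating the scalar-valued quantity $\Lambda^{\mathsf{T}}(\psi,t)C(\psi,t)\Lambda(\psi,t)$ with respect to $t$ along a solution. Using the adjoint equation for $\Lambda$ from Lemma~\ref{lem:adjbvpquasi} and the Lyapunov equation, most terms cancel because $Q^{\mathsf{T}}\Lambda=0$ and $\nabla^{\mathsf{T}}\Lambda = \mathrm{diag}(1,\ldots,1)$. What survives is
\begin{equation}
\partial_t\!\left(\Lambda^{\mathsf{T}} C \Lambda\right) = T A.
\end{equation}
A quasiperiodic solution must have $\Lambda^{\mathsf{T}}C\Lambda$ bounded for all $t$, which forces $A=0$; this parallels the scalar argument in the periodic case.

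With $A=0$, I would introduce the transformation $\tilde{C}(\psi,\tau):=X^{-1}(\psi,\tau)C(\psi,\tau)X^{-\mathsf{T}}(\psi,\tau)$. Substitution into the Lyapunov equation causes the $\mathrm{D}f$ terms to cancel, leaving
\begin{equation}
\partial_\tau \tilde{C}(\psi,\tau) = T\, Q(\psi,0) G(\psi,\tau) G^{\mathsf{T}}(\psi,\tau) Q^{\mathsf{T}}(\psi,0),
\end{equation}
where I used the invariance $X(\psi,\tau)Q(\psi,0)=Q(\psi,\tau)X(\psi,\tau)$ that follows from the definition of $Q$. Direct integration from $0$ to $\tau$ and inverse transformation reproduces the integral formula \eqref{eq:Ctr_integraltorus}. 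Hence any solution of the Lyapunov equation is of the form \eqref{eq:Ctr_integraltorus} for some choice of $C(\psi,0)$, and the condition $\Omega^{\mathsf{T}}(\psi)C(\psi,0)\Omega(\psi)=0$ is equivalent to $\Lambda^{\mathsf{T}}C\Lambda\equiv 0$ (using the relation $\Lambda(\psi,0)=X^{-\mathsf{T}}(\psi,0)\Omega(\psi)=\Omega(\psi)$ together with the preservation established in the first step).

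Existence of the quasiperiodic solution $C_\mathrm{qper}$ then follows from the preceding proposition specialized to $B=0$: the series \eqref{eq:C0explicittorus} converges by the transversal stability assumption, and the resulting $C(\psi,\tau)$ satisfies $C(\psi,k)=C(\psi+k\rho,0)$ for all integers $k$, which is the quasiperiodicity requirement. For uniqueness, I would consider the difference $\Delta(\psi,\tau)$ of two quasiperiodic solutions both satisfying $\Omega^{\mathsf{T}}\Delta(\psi,0)\Omega=0$. Then $\Delta$ solves the Lyapunov equation with $F=0$ and $A=0$, so by the integral formula $\Delta(\psi,k)=X(\psi,k)\Delta(\psi,0)X^{\mathsf{T}}(\psi,k)$. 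Writing $\Delta(\psi,0)=Q(\psi,0)\Delta(\psi,0)Q^{\mathsf{T}}(\psi,0)+\nabla(\psi,0)M(\psi)\nabla^{\mathsf{T}}(\psi,0)$ for some matrix $M$, the condition on $\Omega$ forces $M=0$, and the transversal-stability estimate on $X(\psi,k)$ together with quasiperiodicity $\Delta(\psi,k)=\Delta(\psi+k\rho,0)$ drives the projected part to $0$ as $k\to\infty$. By density of the orbit $\{\psi+k\rho\}$ on $\mathbb{S}^m$ and continuity, $\Delta\equiv 0$.

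The main obstacle, as in the periodic case but more delicate here, is the final uniqueness step: one must combine the exponential contraction of $X(\psi,k)Q(\psi,0)$ with the ergodic density of the $\rho$-translates of any $\psi$ to conclude that the projected component of $\Delta(\cdot,0)$ vanishes identically on $\mathbb{S}^m$. The other steps are essentially algebraic manipulations analogous to those carried out for limit cycles in Proposition~\ref{prop: perorbbvp}, with careful bookkeeping of the additional $\phi_i$-derivatives entering through $\nabla$ and $\Omega$.
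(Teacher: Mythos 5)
Your proposal is correct and takes essentially the same route as the paper: differentiate $\Lambda^\mathsf{T}C\Lambda$ to get $\partial_t(\Lambda^\mathsf{T}C\Lambda)=AT$ and hence $A=0$, transform via $C=X\tilde{C}X^\mathsf{T}$ so the Lyapunov equation integrates back to \eqref{eq:Ctr_integraltorus}, and then obtain existence from the series \eqref{eq:C0explicittorus} and uniqueness from the transversal-stability contraction (the paper leaves this last step implicit in the preceding proposition and corollary, which you spell out). One small repair to your uniqueness step: a symmetric $\Delta(\psi,0)$ cannot in general be written as $Q(\psi,0)\Delta(\psi,0)Q^\mathsf{T}(\psi,0)+\nabla(\psi,0)M(\psi)\nabla^\mathsf{T}(\psi,0)$ (the mixed $Q$--$\nabla$ blocks are missing), but no such decomposition is needed, since $X(\psi,k)=\nabla(\psi+k\rho,0)\Omega^\mathsf{T}(\psi)+\mathcal{O}\left(e^{-k/\tau_\mathrm{tr}}\right)$ together with $\Omega^\mathsf{T}(\psi)\Delta(\psi,0)\Omega(\psi)=0$ and boundedness of $\Delta(\cdot,0)$ on the compact torus already yields $\Delta(\psi,k)=\mathcal{O}\left(e^{-k/\tau_\mathrm{tr}}\right)$, after which your quasiperiodicity-plus-density (or simply evaluating at $\psi=\phi-k\rho$) argument finishes the proof.
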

\begin{proof}
It follows from \eqref{eq:covardifftorus} that
\begin{equation}
\frac{d}{dt}\Lambda(\psi,t)^\mathsf{T}C(\psi,t)\Lambda(\psi,t)=A T
\end{equation}
Thus, $C$ is quasiperiodic only if $A=0$. The transformation $C=X\tilde{C}X^\mathsf{T}$ then implies that
\begin{equation}
\partial_t\tilde{C}(\psi,t)=TQ(\psi,0)G(\psi,t)G^\mathsf{T}(\psi,t)Q^\mathsf{T}(\psi,0)
\end{equation}
and the conclusion follows by integration.
\end{proof}

We conclude by noting that
\begin{equation}
    \Omega(0)^\mathsf{T}C(0,0)\Omega(0)=0\Rightarrow \Lambda^\mathsf{T}(0,k)C(0,k)\Lambda(0,k)=0
\end{equation}
Assuming quasiperiodicity, it follows that
\begin{equation}
    \Omega(k\rho)^\mathsf{T}C(k\rho,0)\Omega(k\rho)=0
\end{equation}
for every $k$. Since $\rho$ is irrational and by continuity, it follows that $    \Omega(\phi)^\mathsf{T}C(\phi,0)\Omega(\phi)=0$ for all $\phi$.

\subsection{Numerical examples}\label{sec:numexpqua}
As a first example, we take inspiration from the second example in Sec.~\ref{sec:numexpper} and consider the non-autonomous, two-dimensional system of SDEs
\begin{align}
    \mathrm{d}x_1&=\frac{2\pi}{\omega}\left(-\Omega x_2+x_1\left(1+\sqrt{x_1^2+x_2^2}(\cos2\pi t-1)\right)\right)\mathrm{d}t+\sigma\sqrt{\frac{2\pi}{\omega}}x_1x_2\mathrm{d}W_t\\
    \mathrm{d}x_2&=\frac{2\pi}{\omega}\left(\Omega x_1+x_2\left(1+\sqrt{x_1^2+x_2^2}(\cos2\pi t-1)\right)\right)\mathrm{d}t+\sigma\sqrt{\frac{2\pi}{\omega}}x_2^2\mathrm{d}W_t
\end{align}
of the form \eqref{eq:sde} with $T=2\pi/\omega$, $f$ given by the $1$-periodic vector field
\begin{equation}
    f(t,x)=\begin{pmatrix}-\Omega x_2+x_1\left(1+\sqrt{x_1^2+x_2^2}(\cos2\pi t-1)\right)\\\Omega x_1+x_2\left(1+\sqrt{x_1^2+x_2^2}(\cos2\pi t-1)\right)\end{pmatrix},
\end{equation}
and $F(x)=(x_1x_2,x_2^2)^\mathsf{T}$. Rather than consider the suspended autonomous state space obtained by appending a third component to the vector field equal to $1$, we retain the two-dimensional description but omit $w_t$ and $\lambda_t$ in the definition of $Q$.

A quasiperiodic solution of the deterministic dynamics is given by
\begin{equation}
    \gamma(\phi,t)=\frac{1+\omega^2}{1+\omega^2-\cos2\pi t-\omega\sin2\pi t}\begin{pmatrix}
    \cos2\pi(\phi+\rho t)\\\sin2\pi(\phi+\rho t)
    \end{pmatrix}
\end{equation}
provided that $\rho=\Omega/\omega$ is irrational. With the notation $b:=1+\omega^2-\cos2\pi t-\omega\sin2\pi t$, $c_{\phi,t}:=\cos2\pi(\phi+\rho t)$, and $s_{\phi,t}:=\sin2\pi(\phi+\rho t)$, we obtain
\begin{equation}
    X(\phi,t)=\frac{\omega^2}{b^2}\begin{pmatrix}e^{-2\pi t/\omega}\omega^2c_{\phi,0} c_{\phi,t}+bs_{\phi,0} s_{\phi,t} & e^{-2\pi t/\omega}\omega^2s_{\phi,0} c_{\phi,t}-bc_{\phi,0} s_{\phi,t}\\ e^{-2\pi t/\omega}\omega^2c_{\phi,0} s_{\phi,t}-bs_{\phi,0} c_{\phi,t} & e^{-2\pi t/\omega}\omega^2s_{\phi,0} s_{\phi,t}+bc_{\phi,0} c_{\phi,t} \end{pmatrix}
\end{equation}
such that, in particular, $\partial_\phi\gamma(\phi,t)=X(\phi,t)\partial_\phi\gamma(\phi,0)$ while it no longer holds that $\partial_t\gamma(\phi,t)=X(\phi,t)\partial_t\gamma(\phi,0)$, since $f$ is non-autonomous. It is now straightforward to verify the equalities $w_\phi^\mathsf{T}(\phi)X^{-1}(\phi,1)=w_\phi^\mathsf{T}(\phi+\rho)$ and $w_\phi^\mathsf{T}(\phi)\partial_\phi\gamma(\phi,0)\equiv 1$ for
\begin{equation}
    w_\phi(\phi)=\frac{\omega^2}{2\pi(1+\omega^2)}\begin{pmatrix}-\sin2\pi\phi\\\cos2\pi\phi\end{pmatrix}
\end{equation}
and we obtain the family of projections
\begin{equation}
    Q(\phi,t)=X(\phi,t)\left(I_2-\partial_\phi\gamma(\phi,0)w_\phi^\mathsf{T}(\phi)\right)X^{-1}(\phi,t)=\begin{pmatrix}
    c_{\phi,t}^2 & s_{\phi,t}c_{\phi,t}\\s_{\phi,t}c_{\phi,t} & s_{\phi,t}^2
    \end{pmatrix}
\end{equation}
onto the radial direction at each point of the corresponding torus. Substitution in \eqref{eq:C0explicittorus} (with $B=0$), followed by extensive algebraic manipulation using \textsc{Mathematica}, then yields the family
\begin{equation}
\label{eq:quasiex1pred}
    C(\phi,0)=\frac{\left(1+\omega ^2\right)^4 \left(1+\Omega ^2-c_{2\phi,0}-\Omega  s_{2\phi,0}\right)}{4\omega
   ^8 \left(1+\Omega ^2\right)}Q(\phi,0).
\end{equation}
In Fig.~\ref{fig4: qp_ex_1}, we compare this prediction to density clouds of points along stochastic trajectories sampled at $t\in\mathbb{Z}$ and observe excellent agreement.

\begin{figure}[htp]
    \centering
    {\includegraphics[width=0.5\textwidth]{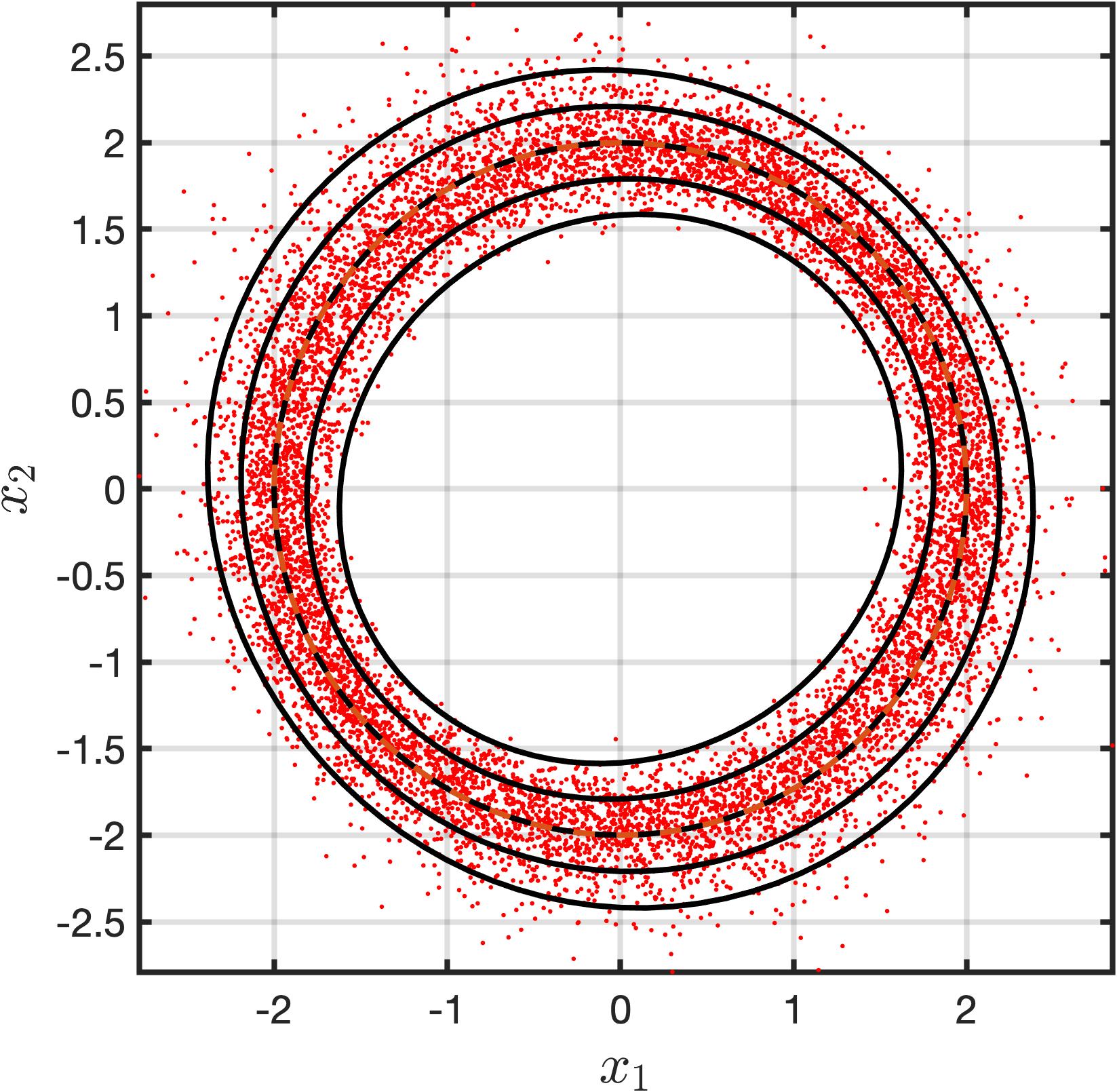}}
    \bigskip
    
    {\includegraphics[width=0.6\textwidth]{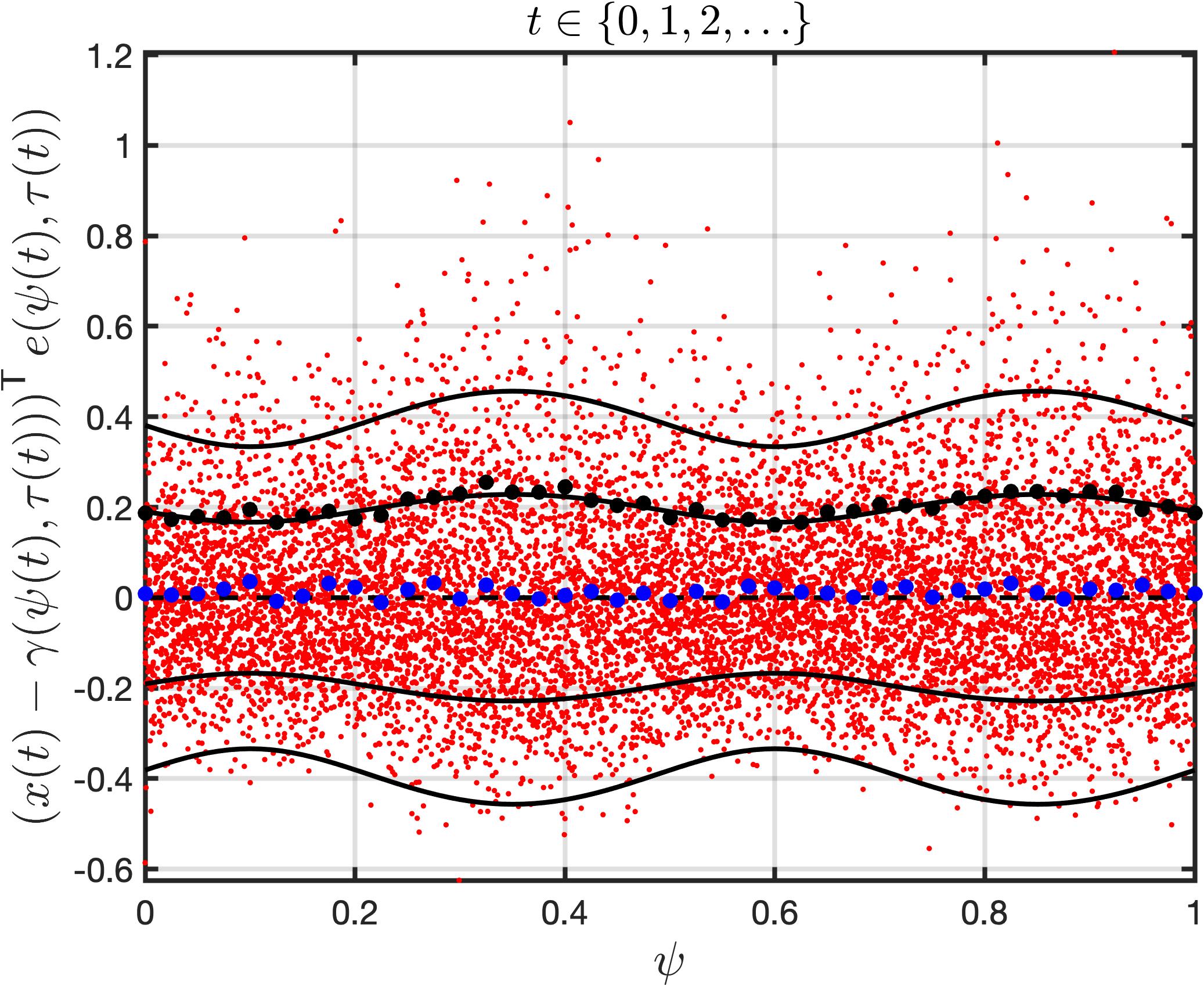}}
    \caption{Comparison of leading-order theoretical predictions and a numerical time history for the first example in Sec.~\ref{sec:numexpqua} with $\Omega=\pi$ and $\omega=1$ in cartesian (upper panel) and polar (lower panel) coordinates. Stochastic trajectories sampled at integer values of the excitation phase (red dots) were obtained using an Euler-Maruyama scheme with $\sigma=0.1$ and $\mathrm{d}t=10^{-4}$ for $10,000$ periods of the excitation. Dashed curves represent the intersection of the deterministic quasiperiodic invariant torus while solid curves represent predicted deviations from this curve of intersection equal to one and two standard deviations, respectively, computed using \eqref{eq:quasiex1pred}. In the lower panel, the vertical axis equals the radial deviation from the (circular) curve of intersection, since the normalized radial eigenvector $e(\psi,0)\parallel\gamma(\psi,0)$ and $w_\phi(\psi)^\mathsf{T}\left(x-\gamma(\psi,0)\right)=0$. There, filled circles represent the statistical mean (blue) and standard deviation (black) of simulated data collected in bins of width $\Delta\psi=1/40$.}
    \label{fig4: qp_ex_1}
\end{figure}

As a second example, this time not amenable to closed-form analysis, we consider the four-dimensional SDE obtained by adding noise to two coupled Van der Pol oscillators:
\begin{align}
\mathrm{d}y=\left(\begin{array}{c}
y_{2}\\
-\epsilon\left(y_{1}^{2}-1\right)y_{2}-y_{1}+\beta\left(y_{3}-y_{1}\right)\\
y_{4}\\
-\epsilon\left(y_{3}^{2}-1\right)y_{4}-\left(1+\delta\right)y_{3}+\beta\left(y_{1}-y_{3}\right)
\end{array}\right)\mathrm{d\eta}+\sigma\begin{pmatrix}0\\\mathrm{d}W_{\eta,1}\\0\\\mathrm{d}W_{\eta,2}
\end{pmatrix}.\label{eq:tori_sde}
\end{align}
Here $\epsilon,\delta,\beta$ are the problem parameters and $W_{\eta}\in \mathbb{R}^{2}$ is a vector of independent standard Brownian motions. Quasiperiodic invariant tori of the deterministic drift vector field have been investigated by Schilder \textit{et al.}~\cite{Schilder2005459}.

In the limit of small $\epsilon$ and $\beta=0$, a quasiperiodic solution of the rescaled deterministic dynamics with $T=2\pi$ is approximately given by
\begin{equation}
    \gamma(\phi,t)=\begin{pmatrix}
    2\sin 2\pi t\\2\cos 2\pi t\\2\sin 2\pi(\phi+\rho t)\\2\rho\cos 2\pi(\phi+\rho t)
    \end{pmatrix}
\end{equation}
for irrational $\rho=\sqrt{1+\delta}$. We may use this expression and corresponding value of $T$ to construct an initial solution guess for quasiperiodic solutions of the rescaled deterministic dynamics for nonzero $\epsilon$ with $\beta=0$. Parameter continuation may then be used to track a family of such solutions for fixed $\rho$ and simultaneous variations of $\beta$ and $\delta$. An example of such a family for $\rho=140/62\sqrt{2}$ and $\epsilon=0.5$ (computed with \textsc{coco} using the discretization algorithm described in the appendix with $N=14$ and individual trajectory segments approximated by continuous piecewise-polynomial functions on 20 mesh intervals) together with a projection of the invariant torus obtained for $\beta=0.5$ are shown in Fig.~\ref{fig5: qp_ex_2}.

\begin{figure}[ht!]
    \centering
    {\includegraphics[width=0.65\textwidth]{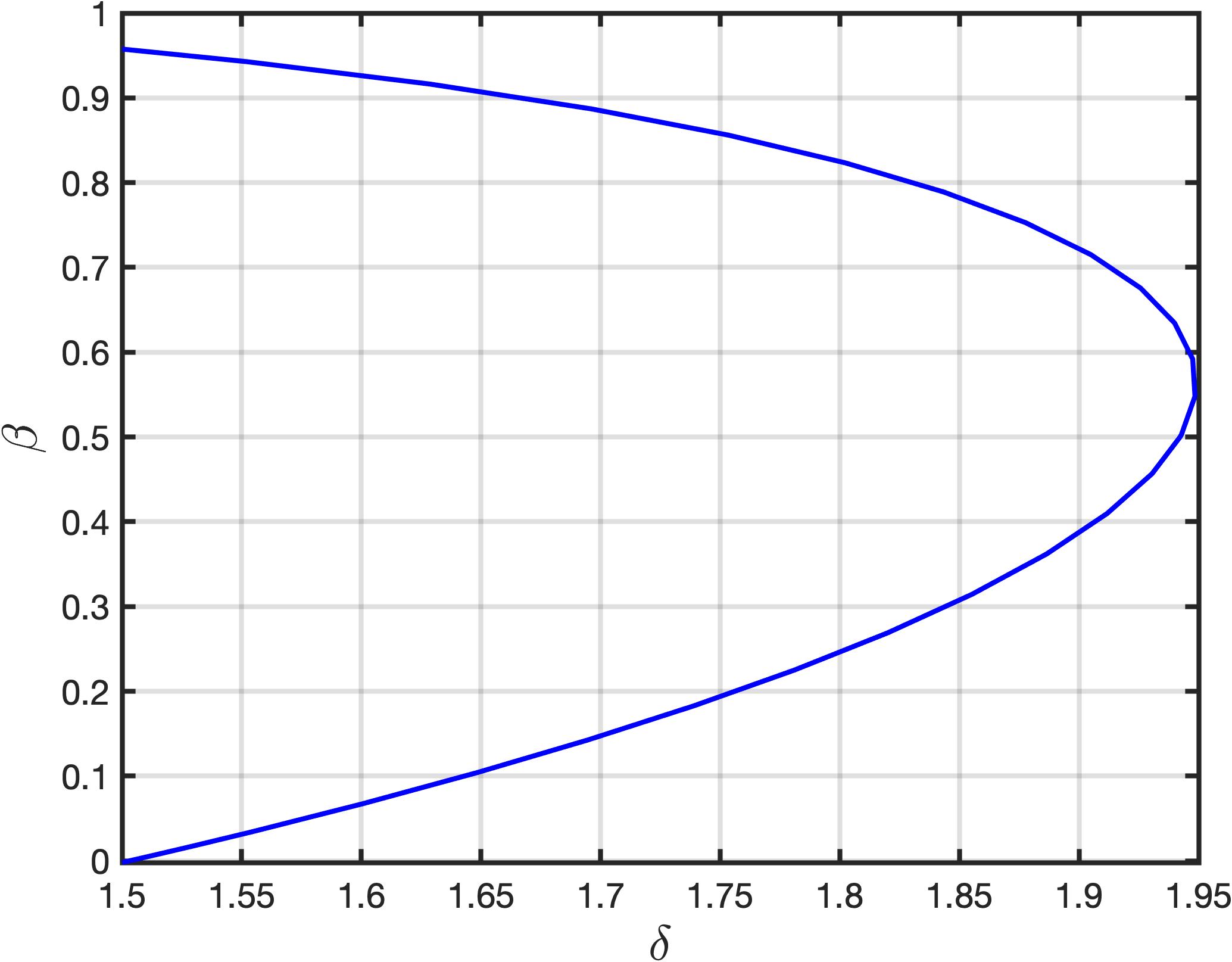}}
    \bigskip
    
    {\includegraphics[width=0.65\textwidth]{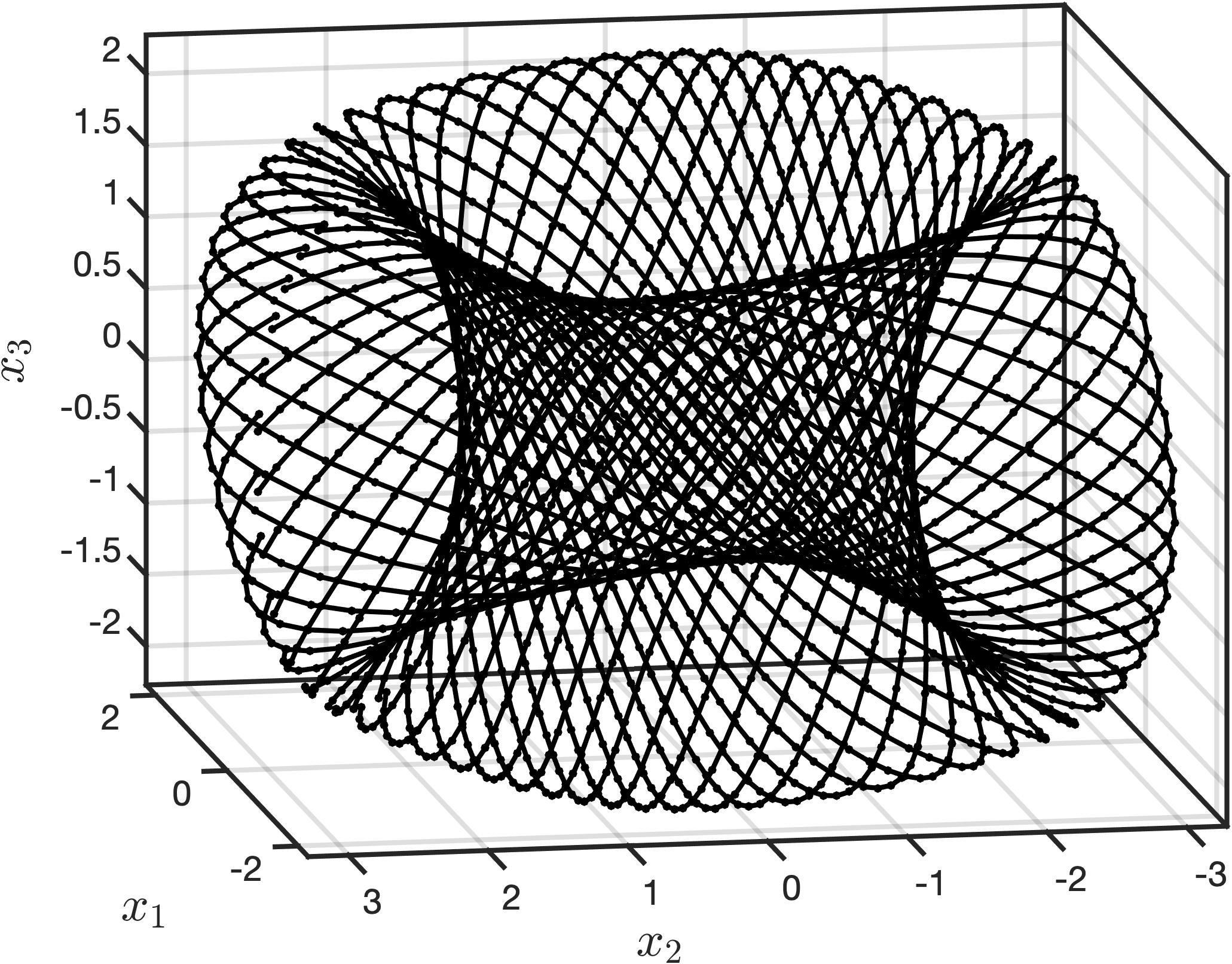}}
    \caption{(Upper panel) Family of quasiperiodic invariant tori for the deterministic limit of the SDE~\eqref{eq:tori_sde} under simultaneous variations in $\beta$ and $\delta$ computed using \textsc{coco}. Here, $\epsilon=0.5$ and the rotation number $\rho$ is fixed at $140/62\sqrt{2}$. Each torus is represented by a finite collection of equal-duration trajectory segments coupled through all-to-all boundary conditions in terms of the rotation number. (Lower panel) The quasiperiodic invariant torus obtained for $\beta=0.5$ represented in terms of 29 trajectory segments, each of which is approximated by a continuous, piecewise-polynomial function on 20 mesh intervals.}
    \label{fig5: qp_ex_2}
\end{figure}
 
In the absence of closed-form analysis, we use built-in support in \textsc{coco} for the corresponding discretization of the adjoint boundary-value problem (see the Appendix) to compute numerical approximations of $\lambda_t(\phi,t)$, $\lambda_\phi(\phi,t)$, and $Q(\phi,t)$ for the torus obtained for $\beta=0.5$. Finally, we derive a corresponding discretization of the linear boundary-value problem in Proposition~\ref{prop: qp_covar_bv} and solve for the covariance matrix function $C(\phi,t)$. We use the norm of the computed matrix $A$ to verify the accuracy of the computation, as this is predicted to equal $0$ in the original boundary-value problem. The results of this analysis are visualized in Fig.~\ref{fig6: qp_ex_2_eigs}. In this case, the 2-norm of $A$ equals $9\times 10^{-9}$, giving us confidence in the validity of the analysis. 

\begin{figure}[htbp]
    \centering
    {\includegraphics[width=0.65\textwidth]{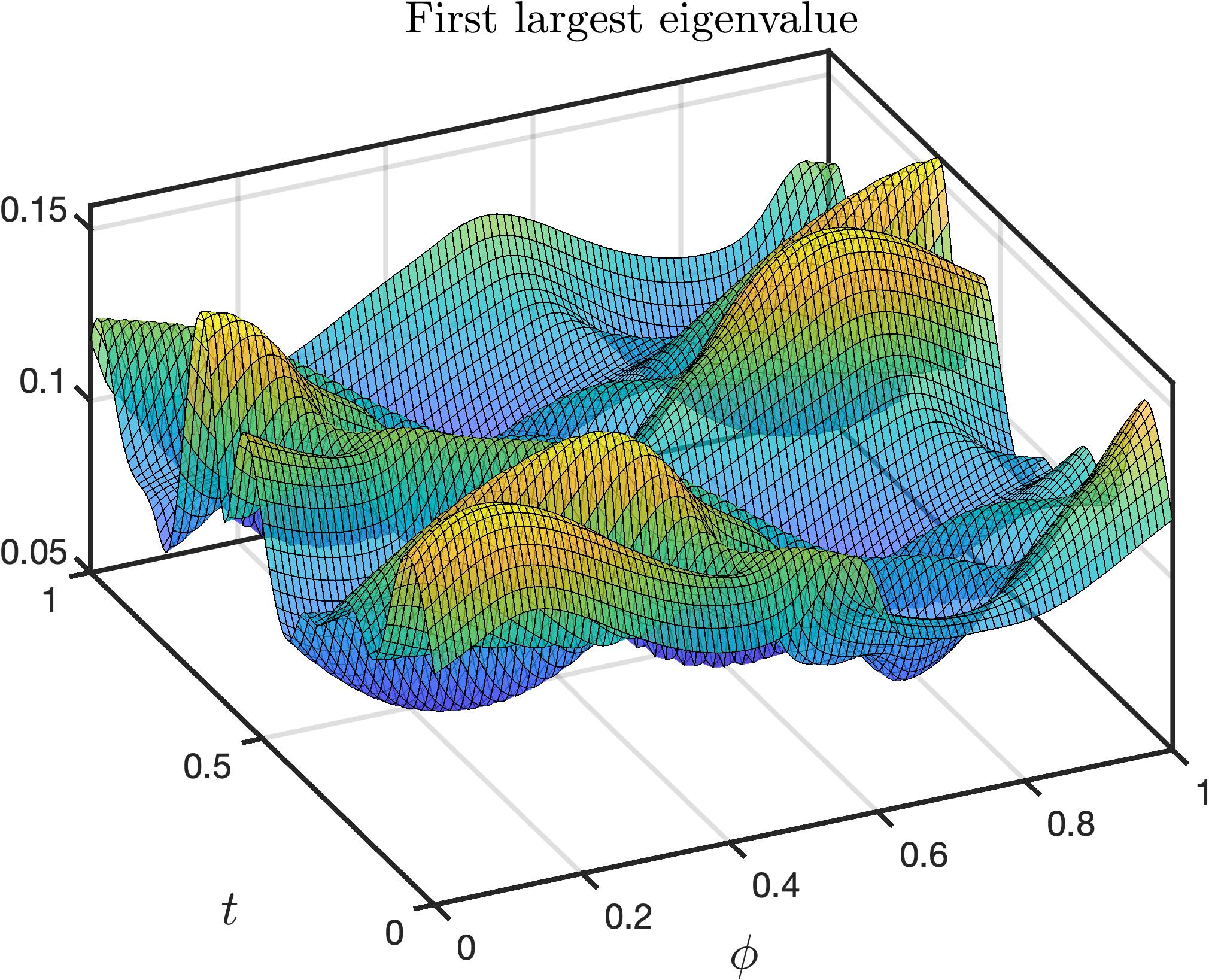}}
    \bigskip
    
    {\includegraphics[width=0.65\textwidth]{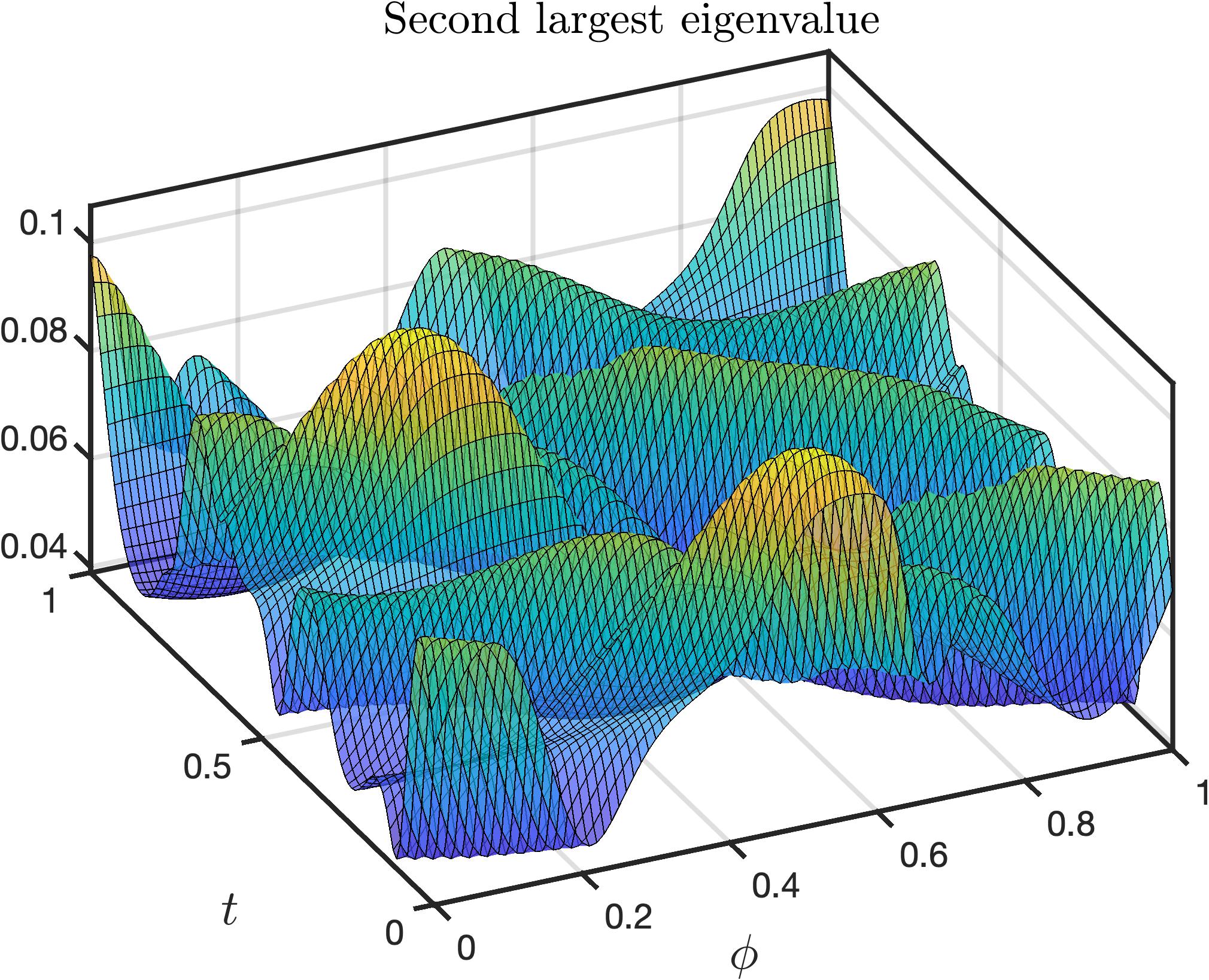}}
    \caption{Surfaces of the two nonzero eigenvalues for the covariance matrix function $C(\phi,t)$ corresponding to the quasiperiodic invariant torus obtained for the deterministic limit of the SDE~\eqref{eq:tori_sde} for $\epsilon=0.5$, $\beta=0.5$, $\delta=1.9422$ and the rotation number $\rho=140/62\sqrt{2}$.}
    \label{fig6: qp_ex_2_eigs}
\end{figure}

As in the previous examples, we use points along numerically integrated stochastic trajectories to validate the theoretical predictions. For points on such trajectories, we first use the constraints in Lemma~\ref{lem:localqua} to identify the corresponding values of $\psi$ and $\tau$. We then proceed to locate ``intersections'' with $\tau=0.5$ by finding consecutive trajectory points on opposite sides of this section and use linear interpolation through these points to capture a point on $\tau=0.5$ and compute the corresponding value of $\psi$ (see Fig.~\ref{fig7: qp_ex_2_section}). Finally, we project the resultant deviations $x_\mathrm{tr}$ onto the predicted eigenvectors of the corresponding covariance matrix associated with the two nonzero eigenvalues (see Fig.~\ref{fig8: qp_ex_2_projs}). The agreement between the statistics for the resultant point clouds and the predicted standard deviations is excellent.

\begin{figure}[htbp]
    \centering
    {\includegraphics[width=0.75\textwidth]{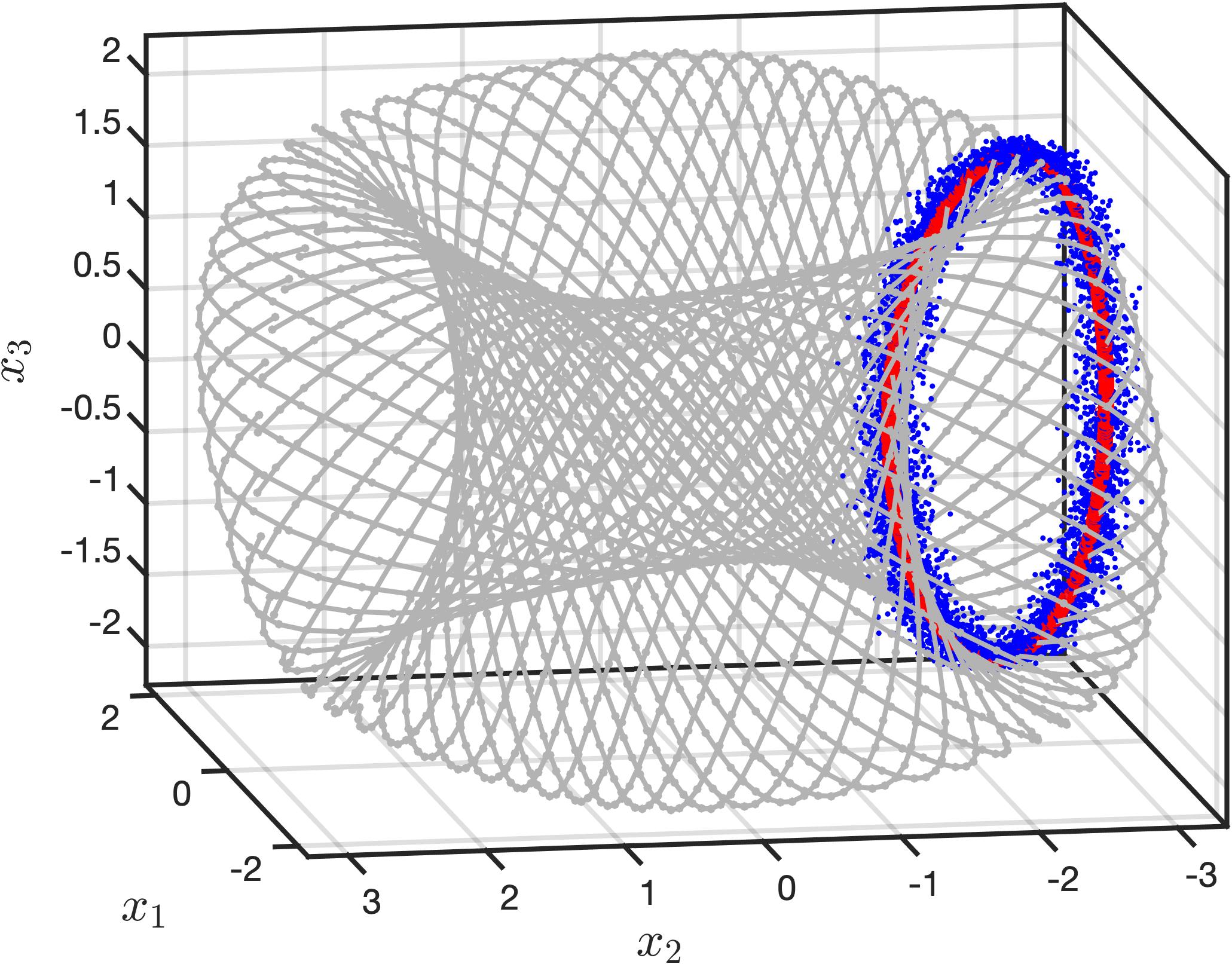}}
    \caption{A point cloud of $5,800$ ``intersections'' (blue) along a numerically integrated stochastic trajectory of the SDE~\eqref{eq:tori_sde} with the section $\tau=0.5$ overlaid on the quasiperiodic invariant torus for the corresponding deterministic dynamics obtained for $\beta=0.5$ (see the lower panel of Fig.~\ref{fig5: qp_ex_2}) and the corresponding points $\gamma(\psi,0.5)$ (red). Stochastic trajectories were obtained using an Euler-Maruyama scheme with $\sigma=0.1$ and $\mathrm{d}t=10^{-4}$ and associated with points on the torus per the method described in the text.}
    \label{fig7: qp_ex_2_section}
\end{figure}

\begin{figure}[htbp]
    \centering
    \includegraphics[width=0.65\textwidth]{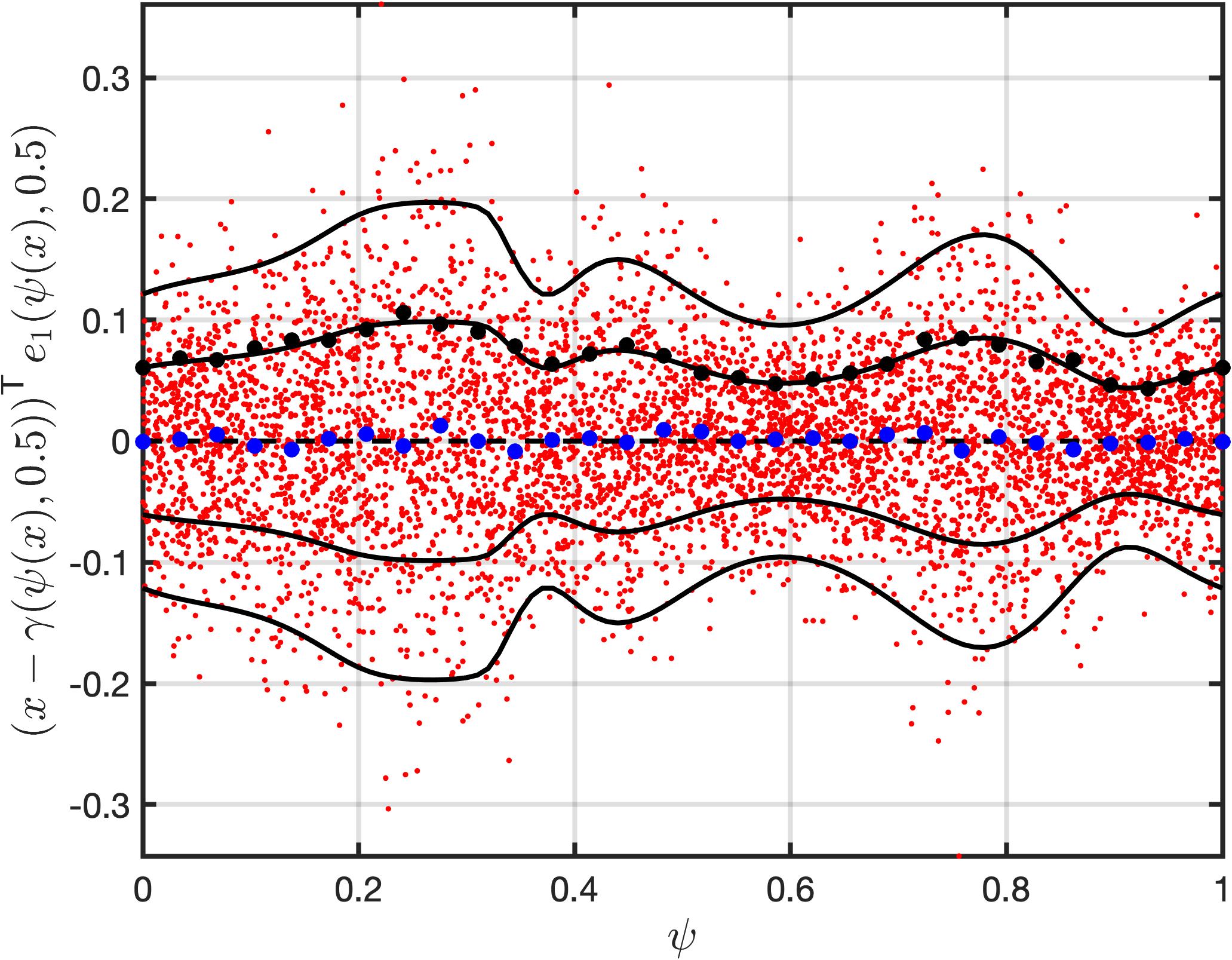}
    \bigskip
    
    \includegraphics[width=0.65\textwidth]{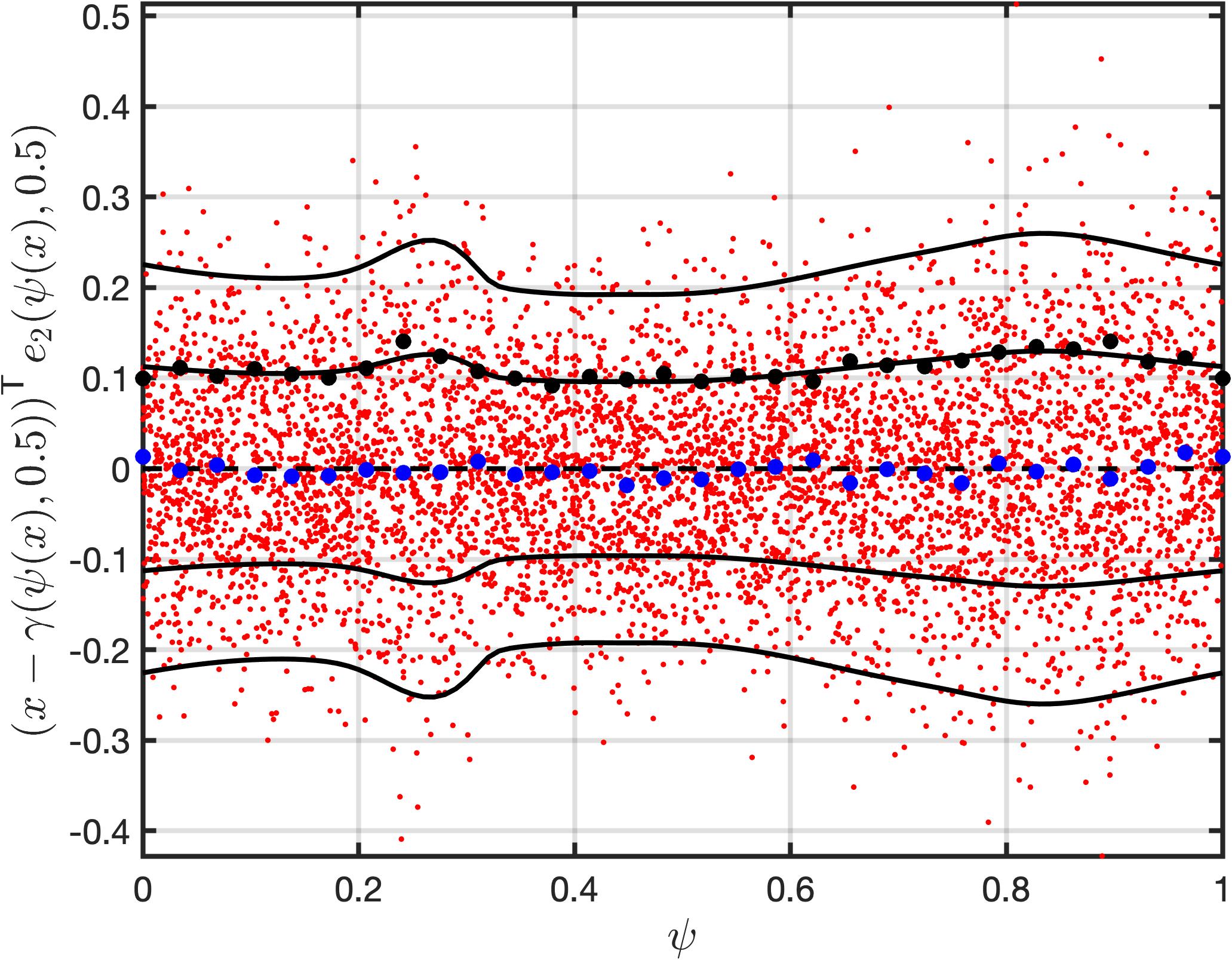}
    \caption{Comparison of leading-order theoretical predictions and the results of analysis of the point cloud in Fig.~\ref{fig7: qp_ex_2_section} obtained for the SDE~\eqref{eq:tori_sde}. For each point $x$ in this cloud, the deviation $x-\gamma(\psi(x),0.5)$ is projected onto the normalized eigenvectors $e_1(\psi(x),0.5)$ and $e_2(\psi(x),0.5)$ corresponding to the two nonzero eigenvalues of the predicted covariance matrix $C(\psi(x),0.5)$. Dashed curves represent points on the deterministic quasiperiodic invariant torus while solid curves represent predicted deviations from this curve of intersection equal to one and two standard deviations, respectively, obtained by multiplying the square roots of the eigenvalues by $\sigma=0.1$. Filled circles represent the statistical mean (blue) and standard deviation (black) of the projected simulated data collected in bins of width $\Delta\psi=1/29$.}
    \label{fig8: qp_ex_2_projs}
\end{figure}

\section{Conclusions}
\label{sec: Conclusions}
The results derived here provide a computationally attractive alternative to and generalization of the earlier work by Bashkirtseva and Ryashko~\cite{bashkirtseva2016sensitivity}, Guo \emph{et al.}~\cite{Guo2017}, and Zhao \emph{et al.}~\cite{Zhao2022}. Where these references rely on projections onto hyperplanes orthogonal to the vector field, transversal to the vector field but otherwise arbitrary, and stroboscopic sections in a case of periodically excitations on top of an existing limit cycle motion, the proposed boundary-value problem formulation is closely integrated with a foliation of transversal hyperplanes that may be defined invariantly through a defining boundary-value problem, without the need for an explicit coordinate basis or additional approximation. From the invariance of this foliation under the linearized deterministic flow, there further follows a closed-form solution for the covariance function in the form of a rapidly convergent series for autonomous as well as periodically excited systems.

A further advancement is the explicit recognition of the existence of $(m+1)^2$ conserved quantities for the forward dynamics of the covariance differential equation with unique periodic or quasiperiodic solutions in each corresponding level surface. Following a recipe analogous to that developed by Mu\~{n}oz-Almaraz \emph{et al.}~\cite{MUNOZALMARAZ20031}, we removed this degeneracy by imposing a finite set of transversality constraints and including symmetry-breaking parameters collected in a matrix $A$ (scalar $a$ for the periodic orbit case) which must equal $0$ on the sought solution. As described in conjunction with an implementation of a suitable discretization of the covariance boundary-value problem in the software package \textsc{coco}, the norm of $A$ provides a useful measure of the truncation error. Notably, this is not available in our alternative implementation that relies on the Moore-Penrose pseudo-inverse to solve the overconstrained boundary-value problem with $A$ set to $0$ \emph{a priori}.

The availability of three examples for which analytical expressions may be found for the adjoint variables and covariance matrix functions is another useful contribution of this paper. As the authors can attest, such analytical expressions are invaluable for debugging computational software that seeks to make predictions about problems with uncertainty, especially for eliminating various sources of systematic bias. In the present study, they highlighted an early erroneous assumption about the symmetry of the matrices $B$ and $A$, and led to the final imposition of the full set of $(m+1)^2$ transversality conditions $\Omega(0)^\mathsf{T}C(0,0)\Omega(0)=0$.

A numerical solver for the covariance boundary-value problem was strictly only needed for the second example in Section~\ref{sec:numexpqua}. Nevertheless, our \textsc{coco} implementation relies on a general-purpose routine for one- (limit cycles) and two-dimensional tori that is compatible with and may be appended to the corresponding nonlinear continuation problem for the torus together with the associated adjoint boundary-value problems. Although the final implementation solves for the covariance function only after the torus and the adjoint functions have been obtained, it is possible to embed the covariance boundary-value problem with the full continuation problem. This comes at the expense of a larger problem dimension but with the benefit that conditions may be imposed on its solution. For example, it might be of interest to track families of local maxima of the eigenvalues of the covariance function under variations in problem parameters or to hold such local maxima fixed during continuation.

Finally, although the covariance boundary-value problem has been investigated here as a means to characterize noise-induced statistics near transversally stable periodic orbits of quasiperiodic invariant tori, we expect that it may serve a useful, but different purpose also in the unstable, but normally hyperbolic case. For such a periodic orbit or quasiperiodic torus, a solution to the corresponding covariance boundary-value problem no longer captures the statistics of trajectories that remain near the deterministic limit cycle over long times, since none do. With suitably chosen noise, however, we expect that such a solution could be used as an alternative means of quantifying the stability of the deterministic object. We are particularly interested in exploring this in the quasiperiodic case, as a substitute for analysis of the solution to the variational problem.

\section*{Acknowledgments}
This material is based upon work performed while the second author (HD) was supported by and served at the National Science Foundation. Any opinion, findings, and conclusions or recommendations expressed in this material are those of the authors and do not necessarily reflect the views of the National Science Foundation. CK also wants to thank the VolkswagenStiftung for support via a Lichtenberg Professorship.

\appendix
\section{Adjoint Conditions and Problem Discretization}
\label{app: Adjoint Conditions and Problem Discretization}

It is, of course, the rare exception when a closed form analysis is both possible and expeditious. In all other cases, the boundary-value problems in the main body of this paper must be implemented numerically in order to compute $\gamma$, $\lambda$ or $\Lambda$, and $C$ and explore variations in $C$ along continuous families of such solutions.

Of the three boundary-value problems, the one for the adjoint $\lambda$ and $\Lambda$ stand out in that the differential equation and periodic or quasiperiodic boundary conditions are supplemented by normalization conditions involving integrals. We show here that the adjoint boundary-value problems, including the integral conditions, may be obtained in stages from a calculus-of-variations approach applied to a suitable constraint Lagrangian (as discussed in~\cite{li2018staged}). We rely on this when using the built-in problem constructors in \textsc{coco} to derive regular zero problems for discretized approximations of $\gamma$ and $\lambda$ or $\Lambda$.

The following two propositions and subsequent corollary review results from \cite{Dankowicz2022329}. Original to this paper is Proposition~\ref{prop:discrete} which translates directly to the \textsc{coco} implementation used to compute $\gamma$, $\Lambda$, and $C$ for the second example in Sec.~\ref{sec:numexpqua}.
\begin{proposition}
Suppose that $h(\gamma(0))=0$ and $\partial_x h(\gamma(0))f(\gamma(0))\ne 0$ for some function $h$. Then, the boundary-value problem
\begin{equation}
0=-\dot{\mu}^\mathsf{T}-T\mu^\mathsf{T}\mathrm{D} f(\gamma),\,0=\mu(0)-\mu(1),\,1=\int_0^1\mu^\mathsf{T}f(\gamma)\,\mathrm{d}t
\end{equation}
is equivalent to the adjoint conditions obtained by imposing vanishing variations of
\begin{equation}
L=S+\int_0^1\mu^\mathsf{T}\left(\dot{x}-Sf(x)\right)\,\mathrm{d}t+w^\mathsf{T}(x(0)-x(1))+\kappa h(x(0))
\end{equation}
under variations in $x$ and $S$, when evaluated at $x=\gamma$ and $S=T$.
\end{proposition}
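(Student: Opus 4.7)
The plan is to compute $\delta L$ under arbitrary independent variations of $x$, $S$, $\mu$, $w$, and $\kappa$, and then read off the stationarity conditions, evaluated at the reference configuration $x=\gamma$, $S=T$. The auxiliary multipliers $\mu$, $w$, $\kappa$ are to be treated as independent variables whose defining (adjoint) conditions emerge from varying $x$ and $S$.

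First I would carry out the routine integration by parts on the only derivative term: $\int_0^1 \mu^\mathsf{T}\delta\dot x\,\mathrm dt = \mu^\mathsf{T}(1)\delta x(1) - \mu^\mathsf{T}(0)\delta x(0) - \int_0^1 \dot\mu^\mathsf{T}\delta x\,\mathrm dt$. Collecting the coefficients of the independent variations $\delta S$, $\delta x(t)$ (on $(0,1)$), $\delta x(0)$, and $\delta x(1)$ in the resulting expression for $\delta L$ produces, respectively,
\begin{equation}
1=\int_0^1\mu^\mathsf{T}f(\gamma)\,\mathrm dt,\qquad 0=-\dot\mu^\mathsf{T}-T\mu^\mathsf{T}\mathrm Df(\gamma),
\end{equation}
together with the boundary conditions $\mu^\mathsf{T}(1)=w^\mathsf{T}$ and $\mu^\mathsf{T}(0)=w^\mathsf{T}+\kappa\,\partial_xh(\gamma(0))$. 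Variations in $\mu$, $w$, and $\kappa$ merely reproduce the deterministic ODE, the periodicity $x(0)=x(1)$, and the phase condition $h(x(0))=0$, all of which hold at $x=\gamma$ by hypothesis, so they impose no additional constraints on the adjoint system.

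Next I would eliminate $\kappa$. Differentiating $\mu^\mathsf{T}(t)f(\gamma(t))$ with respect to $t$ and substituting the adjoint ODE together with $\dot\gamma=Tf(\gamma)$ shows this product is constant in $t$, and the integral condition forces that constant to equal $1$. Evaluating at $t=0$ and $t=1$ and using the boundary expressions for $\mu$, one obtains $1=w^\mathsf{T}f(\gamma(0))+\kappa\,\partial_xh(\gamma(0))f(\gamma(0))$ and $1=w^\mathsf{T}f(\gamma(1))=w^\mathsf{T}f(\gamma(0))$. Subtracting yields $\kappa\,\partial_xh(\gamma(0))f(\gamma(0))=0$, and the nondegeneracy assumption $\partial_xh(\gamma(0))f(\gamma(0))\ne0$ then forces $\kappa=0$. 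Consequently $\mu(0)=w=\mu(1)$, giving periodicity, and the full boundary-value problem for $\mu$ is recovered.

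For the converse direction I would observe that if $\mu$ solves the stated boundary-value problem, setting $w:=\mu(0)=\mu(1)$ and $\kappa:=0$ satisfies each of the stationarity conditions just derived, so the two formulations are equivalent. The only nonroutine step is the elimination of $\kappa$; the transversality hypothesis on $h$ is exactly what is required there, and I would expect this to be the main (though mild) obstacle, since without it the Lagrangian would admit an undesired one-parameter family of critical points and the adjoint boundary conditions would not reduce to the pure periodicity condition $\mu(0)=\mu(1)$.
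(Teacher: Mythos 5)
Your proposal is correct and follows essentially the same route as the paper's proof: variation in $S$ yields the integral condition, variation in $x$ (after integration by parts) yields the adjoint ODE plus the two boundary relations involving $w$ and $\kappa$, and pairing those relations with $f(\gamma(0))$ together with the constancy of $\mu^\mathsf{T}f(\gamma)$ and the transversality hypothesis forces $\kappa=0$ and hence $\mu(0)=w=\mu(1)$. The extra details you supply (identifying the constant as $1$ via the integral condition, and the explicit converse with $w:=\mu(0)$, $\kappa:=0$) are harmless refinements of the same argument.
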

\begin{proof}
The integral condition results directly from vanishing variations with respect to $S$. Vanishing variations with respect to $x$ yield the differential equation and the additional conditions
\begin{equation}
0=\mu(1)^\mathsf{T}-w,\,0=-\mu(0)+w+\kappa\partial_x h(x(0))
\end{equation}
Multiplication of both equations by $f(\gamma(0))$ and use of periodicity and the constancy of $\mu^\mathsf{T}f(\gamma)$ then yields $\kappa=0$ and the claim follows.
\end{proof}

Recall the notation
\begin{equation}
\nabla(\phi,t):=\begin{pmatrix}\partial_t\gamma(\phi,t) & \partial_{\phi_1}\gamma(\phi,t) & \cdots & \partial_{\phi_m}\gamma(\phi,t)\end{pmatrix}
\end{equation}
from the section on quasiperiodic orbits.
\begin{proposition}
\label{prop: quasi}
Suppose that $h_i(\gamma(0,0))=0$, $i=1,\ldots,m+1$ and that the matrix
\begin{equation}
\begin{pmatrix}
\label{eq:transversality}
\partial_x h_1(\gamma(0,0)) \partial_t \gamma(0,0) & \cdots & \partial_x h_{m+1}(\gamma(0,0)) \partial_t \gamma(0,0)\\\vdots & \ddots & \vdots\\\partial_x h_1(\gamma(0,0)) \partial_{\phi_m} \gamma(0,0) & \cdots & \partial_x h_{m+1}(\gamma(0,0)) \partial_{\phi_m} \gamma(0,0)\end{pmatrix}
\end{equation}
is invertible. Then, the boundary-value problem
\begin{align}
0=-\dot{\mu}^\mathsf{T}&-T\mu^\mathsf{T}\mathrm{D} f(\gamma(\phi,t)),0=\mu(\phi+\rho,0)-\mu(\phi,1),\nonumber\\
&\begin{pmatrix}1 & 0 & \cdots & 0\end{pmatrix}=\int_{\mathbb{S}^m}\mu^\mathsf{T}\nabla\,\mathrm{d}\phi
\end{align}
is equivalent to the adjoint conditions obtained by imposing vanishing variations of
\begin{align}
L&=\ln S+\int_{\mathbb{S}^m}\int_0^1\mu^\mathsf{T}\left(\dot{x}-Sf(x)\right)\,\mathrm{d}t\,\mathrm{d}\phi\nonumber\\
&\qquad +\int_{\mathbb{S}^m}w^\mathsf{T}(\phi)(x(\phi+\varrho,0)-x(\phi,1))\,\mathrm{d}\phi+\sum_{i=1}^{m+1}\kappa_i h_i (x(0,0))
\end{align}
under variations in $x$, $S$, and $\varrho$, when evaluated at $x=\gamma$, $S=T$, and $\varrho=\rho$.
\end{proposition}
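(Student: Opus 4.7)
The plan is to compute, in turn, the variations of $L$ with respect to the scalar $S$, the rotation vector $\varrho$, and the trajectory $x$, evaluate each at the anchor $x = \gamma$, $S = T$, $\varrho = \rho$, and then reassemble the resulting identities into the adjoint boundary-value problem displayed in the statement. The structural parallel with the preceding proposition is clear; the genuinely new ingredient is that the point constraints enforced through the $\kappa_i$ multipliers produce a Dirac-mass residual at $\phi = 0$ on the $t = 0$ boundary, which must be killed using the transversality assumption on the matrix in \eqref{eq:transversality}.

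I would begin with the variation in $S$, which yields $S^{-1} = \int_{\mathbb{S}^m}\int_0^1 \mu^\mathsf{T} f(x) \, \mathrm{d}t \, \mathrm{d}\phi$; evaluated at $x = \gamma$ and $S = T$, and using $T f(\gamma) = \partial_t \gamma$, this becomes $\int_{\mathbb{S}^m}\int_0^1 \mu^\mathsf{T} \partial_t \gamma \, \mathrm{d}t \, \mathrm{d}\phi = 1$. Differentiating $\mu^\mathsf{T} \partial_t \gamma$ and $\mu^\mathsf{T} \partial_{\phi_j} \gamma$ in $t$ and substituting the (to-be-derived) adjoint PDE together with the variational identities $\partial_t^2 \gamma = T\mathrm{D}f(\gamma)\partial_t \gamma$ and $\partial_{\phi_j}\partial_t\gamma = T\mathrm{D}f(\gamma)\partial_{\phi_j}\gamma$ confirms that both scalar pairings are independent of $t$. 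The $t$-integral may therefore be dropped, delivering the first entry of the integral condition $\int_{\mathbb{S}^m} \mu^\mathsf{T} \nabla \, \mathrm{d}\phi = (1, 0, \ldots, 0)$.

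Next, I would take the variation with respect to $x$, integrate by parts in $t$, and perform the change of variables $\tilde{\phi} = \phi + \varrho$ in the $w$-boundary integral so that every boundary contribution is collected at fixed $\phi$. Requiring the result to vanish for arbitrary smooth $\delta x$ yields the interior adjoint PDE $-\dot{\mu}^\mathsf{T} = T \mu^\mathsf{T} \mathrm{D}f(\gamma)$, the terminal condition $\mu^\mathsf{T}(\phi, 1) = w^\mathsf{T}(\phi)$, and, at $t = 0$, a distributional identity on $\mathbb{S}^m$ consisting of a continuous part $-\mu^\mathsf{T}(\phi, 0) + w^\mathsf{T}(\phi - \rho)$ plus a Dirac-mass contribution $\sum_i \kappa_i \partial_x h_i(\gamma(0,0))$ supported at $\phi = 0$. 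For smooth $\mu$ both parts must vanish separately, giving $\mu^\mathsf{T}(\phi, 0) = w^\mathsf{T}(\phi - \rho)$ for all $\phi$ together with $\sum_i \kappa_i \partial_x h_i(\gamma(0,0)) = 0$. Combining the two pointwise identities $\mu(\phi, 1) = w(\phi)$ and $\mu(\phi, 0) = w(\phi - \rho)$ delivers the quasiperiodicity condition $\mu(\phi + \rho, 0) = \mu(\phi, 1)$.

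The main obstacle is the vanishing of the multipliers $\kappa_i$, and this is precisely where the transversality hypothesis intervenes: evaluating the row-vector identity $\sum_i \kappa_i \partial_x h_i(\gamma(0,0)) = 0$ on each tangent vector in $\{\partial_t \gamma(0,0), \partial_{\phi_1}\gamma(0,0), \ldots, \partial_{\phi_m}\gamma(0,0)\}$ produces the $(m+1)\times(m+1)$ linear system $M\kappa = 0$, with $M$ the matrix displayed in \eqref{eq:transversality}; invertibility forces $\kappa_i = 0$ for every $i$. Finally, the variation with respect to each component $\varrho_j$ gives $\int_{\mathbb{S}^m} w^\mathsf{T}(\phi)\, \partial_{\phi_j}\gamma(\phi + \rho, 0) \, \mathrm{d}\phi = 0$, and the change of variables $\tilde{\phi} = \phi + \rho$, together with the already-established identity $\mu^\mathsf{T}(\phi, 0) = w^\mathsf{T}(\phi - \rho)$ and the $t$-independence of $\mu^\mathsf{T}\partial_{\phi_j}\gamma$, rewrites this as the remaining entries $\int_{\mathbb{S}^m} \mu^\mathsf{T} \partial_{\phi_j}\gamma \, \mathrm{d}\phi = 0$ of the integral condition, completing the assembly of the adjoint boundary-value problem.
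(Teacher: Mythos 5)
Your proposal is correct and follows essentially the same route as the paper: vanishing variations in $x$ give the adjoint differential equation plus the two boundary identities (the second carrying the Dirac contribution from the $\kappa_i$ terms), the transversality matrix forces $\kappa_i=0$ and hence quasiperiodicity of $\mu$, and the variations in $S$ and $\varrho$ yield the integral conditions after invoking the $t$-constancy of $\mu^\mathsf{T}\nabla$. The only minor difference is in eliminating the multipliers: you split the $t=0$ identity into its regular and singular parts and then pair $\sum_i\kappa_i\partial_x h_i(\gamma(0,0))=0$ with the tangent vectors, whereas the paper multiplies the two boundary identities by $\nabla(\phi,1)$ and $\nabla(\phi,0)$, adds, integrates over $\mathbb{S}^m$, and uses quasiperiodicity of $\nabla$ to cancel the $w$-terms, arriving at the same linear system governed by the matrix in \eqref{eq:transversality}.
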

\begin{proof}
Vanishing variations with respect to $x$ yield the differential equation and the additional conditions
\begin{equation}
0=\mu^\mathsf{T}(\phi,1)-w^\mathsf{T}(\phi)
\end{equation}
and
\begin{equation}
0=-\mu^\mathsf{T}(\phi,0)+w^\mathsf{T}(\phi-\rho)+\delta_\mathrm{D}(\phi)\sum_{i=1}^{m+1}\kappa_i\partial_x h_i (\gamma(0,0))
\end{equation}
in terms of the Dirac delta function $\delta_\mathrm{D}$.
Multiplication of the first condition from the right by $\nabla(\phi,1)$, of the second condition from the right by $\nabla(\phi,0)$, addition of the results using the constancy of $\mu^\mathsf{T}\nabla$ with respect to $t$ to eliminate terms including $\mu$, and integration over $\mathbb{S}^m$  using quasiperiodicity of $\nabla$ then yields $\kappa_i=0$, $i=1,\ldots,m+1$, thus establishing the quasiperiodicity of $\mu$. The integral conditions now follow from variations with respect to $S$ and $\varrho$.
\end{proof}
\begin{corollary}
\label{cor:lambdaphi}
By replacing the term $\ln S$ in the expression for $L$ by $-\varrho_i$, one obtains the integral conditions associated with $\lambda_{\phi_i}$.
\end{corollary}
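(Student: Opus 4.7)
The plan is to mimic the variational derivation of Proposition~\ref{prop: quasi} verbatim and track exactly the places where the replacement $\ln S \mapsto -\varrho_i$ alters the resulting Euler--Lagrange conditions. Since the replaced term depends on neither $x$ nor its boundary values, the variations in $x$ are unchanged: they still yield the adjoint differential equation, the boundary identification $\mu^\mathsf{T}(\phi,1)=w^\mathsf{T}(\phi)$, and the interior condition
\begin{equation*}
0=-\mu^\mathsf{T}(\phi,0)+w^\mathsf{T}(\phi-\rho)+\delta_\mathrm{D}(\phi)\sum_{j=1}^{m+1}\kappa_j\partial_x h_j(\gamma(0,0)).
\end{equation*}
First I would reuse the $\kappa$-elimination step: right-multiply the two $x$-variational conditions by $\nabla(\phi,1)$ and $\nabla(\phi,0)$, add, cancel the $\mu$-terms using the $t$-independence of $\mu^\mathsf{T}\nabla$, and integrate over $\mathbb{S}^m$. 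Invertibility of the transversality matrix~\eqref{eq:transversality} then forces $\kappa_j=0$ for all $j$, so $\mu$ is quasiperiodic.

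Next I would examine the two integral conditions, which are precisely where the replacement has bite. With $\ln S$ removed, vanishing of $\partial_S L$ reduces to $\int_{\mathbb{S}^m}\int_0^1 \mu^\mathsf{T} f(x)\,\mathrm{d}t\,\mathrm{d}\phi = 0$; evaluating at $x=\gamma$, $S=T$ and using $Tf(\gamma)=\partial_t\gamma$ together with the $t$-independence of $\mu^\mathsf{T}\partial_t\gamma$, this gives
\begin{equation*}
0=\int_{\mathbb{S}^m}\mu^\mathsf{T}(\phi,t)\,\partial_t\gamma(\phi,t)\,\mathrm{d}\phi
\end{equation*}
for every $t$. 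For the $\varrho_j$-variation, the added term $-\varrho_i$ contributes $-\delta_{ij}$, while the boundary term contributes $\int_{\mathbb{S}^m} w^\mathsf{T}(\phi)\,\partial_{\phi_j}x(\phi+\varrho,0)\,\mathrm{d}\phi$. Evaluating at $\varrho=\rho$, $x=\gamma$, using the quasiperiodicity $\partial_{\phi_j}\gamma(\phi+\rho,0)=\partial_{\phi_j}\gamma(\phi,1)$, and the identification $w=\mu(\cdot,1)$, I obtain
\begin{equation*}
\delta_{ij}=\int_{\mathbb{S}^m}\mu^\mathsf{T}(\phi,1)\,\partial_{\phi_j}\gamma(\phi,1)\,\mathrm{d}\phi.
\end{equation*}

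Taken together, these two identities say that the row vector $\int_{\mathbb{S}^m}\mu^\mathsf{T}\nabla\,\mathrm{d}\phi$ has a $0$ in the time slot and Kronecker-delta entries in the phase slots, which are exactly the normalization conditions singling out $\lambda_{\phi_i}$ in Lemma~\ref{lem:adjbvpquasi}. The main (and only mildly delicate) obstacle is the $\varrho$-derivative of the boundary term: the $\varrho$ appears inside the argument of $x$, so a change of variable $\phi\mapsto\phi-\varrho$ (or equivalently a quasiperiodic shift) is needed before the boundary substitution $w=\mu(\cdot,1)$ can be applied; this is the same quasiperiodic bookkeeping implicit in Proposition~\ref{prop: quasi} but is the step most vulnerable to a sign or index error, so it deserves to be carried out explicitly.
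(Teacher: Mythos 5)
Your proposal is correct and follows essentially the same route as the paper: the corollary is left without a separate proof precisely because it is the variational computation of Proposition~\ref{prop: quasi} rerun with $\ln S$ replaced by $-\varrho_i$, which is exactly what you do—the $x$-variations and the $\kappa$-elimination are untouched, the $S$-variation now yields $0=\int_{\mathbb{S}^{m+1}}\mu^\mathsf{T}\partial_t\gamma\,\mathrm{d}t\,\mathrm{d}\phi$, and the $\varrho_j$-variation yields $\delta_{ij}=\int_{\mathbb{S}^m}\mu^\mathsf{T}(\phi,1)\partial_{\phi_j}\gamma(\phi,1)\,\mathrm{d}\phi$, matching the normalization conditions for $\lambda_{\phi_i}$ in Lemma~\ref{lem:adjbvpquasi}. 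Your closing caution about the $\varrho$-dependence inside $x(\phi+\varrho,0)$ is harmless but unnecessary: differentiating gives $\partial_{\phi_j}x(\phi+\varrho,0)$ directly, and quasiperiodicity plus $w=\mu(\cdot,1)$ finish the step without any change of variables.
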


One notes, in the quasiperiodic case, the absence of partial derivatives with respect to components of $\phi$ in either of the differential equations for $\gamma$, $\Lambda$, or $C$. As described in \cite{dankowicz2013recipes}, this suggest a discretization of all $\phi$-dependent unknowns in terms of truncated Fourier series such that, e.g.,
\begin{equation}
\label{eq:Four1}
\gamma(\phi,t)=a_0(\phi_{-1},t)+\sum_{k=1}^Na_k(\phi_{-1},t)\cos 2\pi k\phi_1+b_k(\phi_{-1},t)\sin 2\pi k\phi_1
\end{equation}
for some integer $N$ and where $\phi_{-k}$ denotes the argument sequence $\phi_{k+1},\ldots,\phi_m$. 
\begin{lemma}
Let $\phi_{1,j}:=(j-1)/(2N+1)$ and define the notation
\begin{equation}
B(\phi_1):=\begin{pmatrix}1 & \cos 2\pi\phi_1 & \sin 2\pi\phi_1 & \cdots & \cos 2N\pi\phi_1 & \sin 2N\pi\phi_1\end{pmatrix}
\end{equation}
and
\begin{equation}
G(\phi_{-1},t):=\begin{pmatrix}\gamma_1(\phi_{-1},t)\\\vdots\\\gamma_{2N+1}(\phi_{-1},t)\end{pmatrix},
\end{equation}
where $\gamma_j(\phi_{-1},t):=\gamma(\phi_{1,j},\phi_{-1},t)$. Then, there exists an invertible $(2N+1)\times(2N+1)$ matrix $\mathcal{F}$ such that
\begin{equation}
\gamma(\phi,t)=\left(B(\phi_1)\mathcal{F}\otimes I_n\right)G(\phi_{-1},t)
\end{equation}
and
\begin{equation}
\mathcal{F}^\mathsf{T}\begin{pmatrix}1 & & &\\& 1/2 & &\\& &  \ddots &\\& &  & 1/2\end{pmatrix}\mathcal{F}=\frac{1}{2N+1}I_{2N+1}.
\end{equation}
\end{lemma}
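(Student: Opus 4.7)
The plan is to read off $\mathcal{F}$ directly from the inverse of the Vandermonde-type evaluation matrix of the Fourier basis at the sample points, and then verify its orthogonality property using the standard discrete trigonometric identities at $2N+1$ equispaced nodes.

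First, I would evaluate the Fourier representation \eqref{eq:Four1} at the sample points $\phi_1=\phi_{1,j}$, $j=1,\ldots,2N+1$, obtaining the linear relation
\begin{equation}
G(\phi_{-1},t)=(M\otimes I_n)\begin{pmatrix}a_0(\phi_{-1},t)\\ a_1(\phi_{-1},t)\\ b_1(\phi_{-1},t)\\\vdots\\ a_N(\phi_{-1},t)\\ b_N(\phi_{-1},t)\end{pmatrix},
\end{equation}
where $M$ is the $(2N+1)\times(2N+1)$ matrix whose $j$th row is $B(\phi_{1,j})$. Assuming, for the moment, that $M$ is invertible, set $\mathcal{F}:=M^{-1}$. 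Then the coefficient column equals $(\mathcal{F}\otimes I_n)G(\phi_{-1},t)$, and substituting back into \eqref{eq:Four1} gives
\begin{equation}
\gamma(\phi,t)=\bigl(B(\phi_1)\otimes I_n\bigr)(\mathcal{F}\otimes I_n)G(\phi_{-1},t)=\bigl(B(\phi_1)\mathcal{F}\otimes I_n\bigr)G(\phi_{-1},t),
\end{equation}
which is the claimed expansion.

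Next I would establish the discrete orthogonality $M^\mathsf{T}M=(2N+1)D^{-1}$ with $D:=\mathrm{diag}(1,1/2,\ldots,1/2)$. For any integer $\ell$ with $0<|\ell|\le 2N$, the sum $\sum_{j=1}^{2N+1}e^{2\pi i\ell\phi_{1,j}}$ is a finite geometric series summing to zero, while it equals $2N+1$ when $\ell=0$. Writing cosines and sines in complex-exponential form and expanding products $\cos 2\pi k\phi_{1,j}\cos 2\pi l\phi_{1,j}$, $\sin 2\pi k\phi_{1,j}\sin 2\pi l\phi_{1,j}$, and $\cos 2\pi k\phi_{1,j}\sin 2\pi l\phi_{1,j}$ in terms of frequencies $k\pm l$ with $0\le k,l\le N$, each of these frequencies satisfies $|k\pm l|\le 2N$, so the vanishing identity applies except when $k=l$. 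A direct computation then yields
\begin{equation}
\sum_{j=1}^{2N+1}1=2N+1,\quad\sum_{j=1}^{2N+1}\cos^2 2\pi k\phi_{1,j}=\sum_{j=1}^{2N+1}\sin^2 2\pi k\phi_{1,j}=\frac{2N+1}{2}
\end{equation}
for $1\le k\le N$, and all other pairwise sums vanish. This is precisely $M^\mathsf{T}M=(2N+1)D^{-1}$. In particular, $M$ is invertible (confirming the earlier assumption), and rewriting the identity in terms of $\mathcal{F}=M^{-1}$ gives
\begin{equation}
\mathcal{F}^\mathsf{T}D\mathcal{F}=\frac{1}{2N+1}I_{2N+1},
\end{equation}
which completes the proof.

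The only non-bookkeeping step is the discrete orthogonality calculation, and the main thing to watch is the frequency range: because $0\le k,l\le N$ and $k+l\le 2N$, no nontrivial cross term accidentally aliases back to the zero frequency on $2N+1$ equispaced nodes, so aliasing is harmless. The rest is routine Kronecker-product algebra.
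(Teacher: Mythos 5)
Your proposal takes essentially the same route as the paper, which simply declares $\mathcal{F}^{-1}$ to be the evaluation matrix with rows $B(\phi_{1,j})$ ``by inspection'' and invokes the standard discrete trigonometric identities; you merely spell out the orthogonality computation explicitly, and the aliasing remark about $|k\pm l|\le 2N$ is exactly the right point to check. One small slip: with $D=\mathrm{diag}(1,1/2,\ldots,1/2)$, the sums you compute ($2N+1$ on the constant row and $(2N+1)/2$ on each cosine/sine row, cross terms zero) give $M^\mathsf{T}M=(2N+1)D$, not $(2N+1)D^{-1}$; it is this corrected identity that yields $\mathcal{F}^\mathsf{T}D\mathcal{F}=M^{-\mathsf{T}}DM^{-1}=\tfrac{1}{2N+1}I_{2N+1}$, so the conclusion stands once the typo is fixed.
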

\begin{proof}
By inspection, $\mathcal{F}^{-1}=$
\begin{equation}
{\small \begin{pmatrix}
1 & \cos 2\pi\phi_{1,1} & \sin 2\pi\phi_{1,1} & \cdots & \cos 2\pi N\phi_{1,1} & \sin 2\pi N\phi_{1,1}\\
\vdots & \vdots & \vdots & \ddots & \vdots & \vdots\\
1 & \cos 2\pi\phi_{1,2N+1} & \sin 2\pi\phi_{1,2N+1} & \cdots & \cos 2\pi N\phi_{1,2N+1} & \sin 2\pi N\phi_{1,2N+1}\end{pmatrix}
}
\end{equation}
and the claim follows by properties of the trigonometric functions.
\end{proof}
\begin{corollary}
Let
\begin{equation}
\mathcal{R}(\rho_1):=\begin{pmatrix}
1 & & & & &\\
& \cos2\pi\rho_1 & \sin2\pi\rho_1 & & &\\& -\sin2\pi\rho_1 & \cos2\pi\rho_1 & & &\\& & & \ddots & &\\
&& & & \cos2\pi N\rho_1 & \sin2\pi N\rho_1\\& && & -\sin2\pi N\rho_1 & \cos2\pi N\rho_1
\end{pmatrix}
\end{equation}
and
\begin{equation}
\mathcal{D}:=2\pi\begin{pmatrix}
0 & & & & &\\
& 0 & 1 & & &\\& -1 & 0 & & &\\& & & \ddots & &\\
&& & & 0 & N\\& && & -N & 0
\end{pmatrix}
\end{equation}
Then, $B'(\phi_1)=B(\phi_1)\mathcal{D}$, $\mathcal{R}'(\rho_1)=\mathcal{R}(\rho_1)\mathcal{D}$, and
\begin{equation}
\gamma(\phi+\rho,t)=\left(B(\phi_1)\mathcal{R}(\rho_1)\mathcal{F}\otimes I_n\right)G(\phi_{-1}+\rho_{-1},t),
\end{equation}
\end{corollary}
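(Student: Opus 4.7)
The plan is to verify the three claimed identities by direct block-wise computation, exploiting the fact that $\mathcal{D}$ and $\mathcal{R}(\rho_1)$ share the same $1 \oplus 2 \oplus 2 \oplus \cdots \oplus 2$ block-diagonal structure, with each nontrivial $2\times 2$ block indexed by the harmonic number $k\in\{1,\ldots,N\}$. In that decomposition, the $k$-th block of $\mathcal{D}$ is the infinitesimal generator $2\pi k J$ (with $J=\begin{pmatrix}0&1\\-1&0\end{pmatrix}$) of the $k$-th block $\exp(2\pi k\rho_1 J)$ of $\mathcal{R}(\rho_1)$, and the corresponding pair of entries of $B(\phi_1)$ is $(\cos 2\pi k\phi_1, \sin 2\pi k\phi_1)$. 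Once this is noted, each identity follows from a one-line computation on each block.

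First, I would verify $B'(\phi_1)=B(\phi_1)\mathcal{D}$ by differentiating $B$ entry by entry. The constant $1$ contributes $0$, matching the $0$ at the top of $\mathcal{D}$, while the pair $(\cos 2\pi k\phi_1,\sin 2\pi k\phi_1)$ differentiates to $2\pi k(-\sin 2\pi k\phi_1,\cos 2\pi k\phi_1)$, which is exactly the result of multiplying that pair on the right by $2\pi k J$. Second, the identity $\mathcal{R}'(\rho_1)=\mathcal{R}(\rho_1)\mathcal{D}$ is the standard matrix ODE satisfied by $\exp(2\pi k \rho_1 J)$: differentiating each $2\times 2$ rotation block gives $2\pi k\begin{pmatrix}-\sin 2\pi k\rho_1 & \cos 2\pi k\rho_1\\-\cos 2\pi k\rho_1 & -\sin 2\pi k\rho_1\end{pmatrix}$, which coincides with the product of the rotation block and $2\pi k J$.

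Third, for the quasiperiodicity formula, I would start from the representation supplied by the previous Lemma, namely $\gamma(\phi,t)=(B(\phi_1)\mathcal{F}\otimes I_n)G(\phi_{-1},t)$, and evaluate it at $(\phi+\rho,t)$, obtaining
\begin{equation}
\gamma(\phi+\rho,t)=\left(B(\phi_1+\rho_1)\mathcal{F}\otimes I_n\right)G(\phi_{-1}+\rho_{-1},t).
\end{equation}
The conclusion then reduces to the purely trigonometric identity $B(\phi_1+\rho_1)=B(\phi_1)\mathcal{R}(\rho_1)$, which I would establish block by block. Applying the angle-addition formulas,
\begin{equation}
\cos 2\pi k(\phi_1+\rho_1)=\cos 2\pi k\phi_1\cos 2\pi k\rho_1-\sin 2\pi k\phi_1\sin 2\pi k\rho_1,
\end{equation}
and the analogous expansion of $\sin 2\pi k(\phi_1+\rho_1)$, one recognizes the right-hand sides as precisely the entries produced by right-multiplying the pair $(\cos 2\pi k\phi_1,\sin 2\pi k\phi_1)$ by the $k$-th rotation block of $\mathcal{R}(\rho_1)$; the leading $1$ is preserved by the leading $1$ in $\mathcal{R}(\rho_1)$.

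No step is substantively hard; the only place one might slip is in matching conventions, specifically the sign placement inside $\mathcal{R}(\rho_1)$ versus the orientation chosen for the rotation in the angle-addition identity. I would therefore pay particular attention to checking that $\mathcal{R}(\rho_1)$ acts on $B(\phi_1)$ from the right (as written) rather than from the left, which governs whether the identity reads as $B(\phi_1+\rho_1)$ or $B(\phi_1-\rho_1)$. With that sign bookkeeping resolved, the three claims follow immediately.
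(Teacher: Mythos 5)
Your verification is correct and matches the route the paper implicitly intends: the corollary is left unproved there precisely because it follows, as you show, by blockwise differentiation, the angle-addition identity $B(\phi_1+\rho_1)=B(\phi_1)\mathcal{R}(\rho_1)$, and substitution of the preceding lemma's representation evaluated at $(\phi+\rho,t)$. Your sign bookkeeping (right-multiplication by the rotation blocks, $J$ as generator) is consistent with the paper's conventions, so nothing is missing.
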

\begin{proposition}
\label{prop:discrete}
Let
\begin{equation}
M(\phi_{-1},t):=\begin{pmatrix}\mu(\phi_{1,1},\phi_{-1},t)\\\vdots\\\mu(\phi_{1,2N+1},\phi_{-1},t)\end{pmatrix}.
\end{equation}
The discretized form of the adjoint boundary-value problem in Proposition~\ref{prop: quasi} consists of the differential equations
\begin{equation}
0=-\partial_tM^\mathsf{T}-TM^\mathsf{T}\begin{pmatrix}\mathrm{D} f(\gamma_1) & & \\ & \ddots & \\& & \mathrm{D} f(\gamma_{2N+1}) \end{pmatrix}
\end{equation}
the boundary conditions
\begin{equation}
0=\left(\mathcal{F}\otimes I_n\right)M(\phi_{-1},1)-\left(\mathcal{R}(\rho_1)\mathcal{F}\otimes I_n\right)M(\phi_{-1}+\rho_{-1},0),
\end{equation}
and the integral conditions
\begin{equation}
1=\frac{1}{2N+1}\int_{\mathbb{S}^{m-1}}M^\mathsf{T}\partial_tG\,\mathrm{d}\phi_{-1},
\end{equation}
\begin{equation}
0=\frac{1}{2N+1}\int_{\mathbb{S}^{m-1}}M^\mathsf{T}\partial_{\phi_i}G\,\mathrm{d}\phi_{-1},
\end{equation}
for $i=2,\ldots,m$, and
\begin{equation}
0=\int_{\mathbb{S}^{m-1}}M^\mathsf{T}\left(\mathcal{F}^\mathsf{T}\mathcal{D}^\mathsf{T}\begin{pmatrix}1 & & &\\& 1/2 & &\\& &  \ddots &\\& &  & 1/2\end{pmatrix}\mathcal{F}\otimes I_n\right)G\,\mathrm{d}\phi_{-1},
\end{equation}
These are equivalent to the adjoint conditions obtained by imposing vanishing variations of
\begin{align}
L&=(2N+1)\ln S+\int_{\mathbb{S}^{m-1}}\int_0^1M^\mathsf{T}\left(\partial_t\Xi-SF\right)\,\mathrm{d}t\,\mathrm{d}\phi_{-1}\nonumber\\
&+\int_{\mathbb{S}^{m-1}}w^\mathsf{T}(\phi_{-1})\bigg(\left(\mathcal{R}(\varrho_1)\mathcal{F}\otimes I_n\right)\Xi(\phi_{-1}+\varrho_{-1},0)-\left(\mathcal{F}\otimes I_n\right)\Xi(\phi_{-1},1)\bigg)\,\mathrm{d}\phi_{-1}\nonumber\\&\qquad\qquad\qquad+\sum_{i=1}^{m+1}\kappa_i h_i (x(0,0))\nonumber
\end{align}
where
\begin{equation}
\Xi(\phi_{-1},t):=\begin{pmatrix}x(\phi_{1,1},\phi_{-1},t)\\\vdots\\x(\phi_{1,2N+1},\phi_{-1},t)\end{pmatrix},\,
F(\phi_{-1},t):=\begin{pmatrix}f(x(\phi_{1,1},\phi_{-1},t))\\\vdots\\f(x(\phi_{1,2N+1},\phi_{-1},t))\end{pmatrix}
\end{equation}
under variations in $\Xi$, $S$, and $\varrho$, when evaluated at $\Xi=G$, $S=T$, and $\varrho=\rho$.
\end{proposition}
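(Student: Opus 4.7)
The plan is to mirror the variational derivation used in Proposition~\ref{prop: quasi}, but applied now to the Fourier-collocated Lagrangian $L$ with row-stacked unknowns $\Xi$ and multipliers $M$. First, I would take the variation with respect to $\Xi$. The $\partial_t\Xi$ term integrates by parts in $t$, producing the block-diagonal adjoint differential equation (since $SF$ depends on $\Xi$ only through pointwise evaluation $f(x(\phi_{1,j},\phi_{-1},t))$, the Jacobian is exactly the block-diagonal matrix of $\mathrm{D} f(\gamma_j)$'s shown in the claim). The endpoint terms $[M^\mathsf{T}\Xi]_0^1$ combine with the explicit boundary term in $L$ to give, at $t=1$, $M^\mathsf{T}(\phi_{-1},1)=w^\mathsf{T}(\phi_{-1})(\mathcal{F}\otimes I_n)$, and at $t=0$ (after the shift $\phi_{-1}\mapsto\phi_{-1}-\varrho_{-1}$ in the boundary integral), $M^\mathsf{T}(\phi_{-1},0)=w^\mathsf{T}(\phi_{-1}-\varrho_{-1})(\mathcal{R}(\varrho_1)\mathcal{F}\otimes I_n)$ plus a Dirac-delta contribution at $\phi=0$ coming from the phase conditions.

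Next, I would eliminate the coefficients $\kappa_i$ by a transversality argument identical in spirit to the one in Proposition~\ref{prop: quasi}: multiply the $t=1$ and $t=0$ relations from the right by $\nabla(\phi,1)$ and $\nabla(\phi,0)$ respectively, integrate in $\phi_{-1}$, use that $M^\mathsf{T}\partial_t G$ and $M^\mathsf{T}\partial_{\phi_i} G$ are $t$-independent along each characteristic, and invoke the invertibility of the matrix \eqref{eq:transversality} to conclude $\kappa_i=0$ for $i=1,\dots,m+1$. Taking transposes and eliminating $w$ between the two endpoint relations yields the stated boundary condition $(\mathcal{F}\otimes I_n)M(\phi_{-1},1)=(\mathcal{R}(\rho_1)\mathcal{F}\otimes I_n)M(\phi_{-1}+\rho_{-1},0)$, which is precisely the Fourier-discretized quasiperiodicity of $\mu$.

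Third, I would take variations with respect to $S$ and $\varrho_{-1}$. Differentiating $(2N+1)\ln S$ and the $-SM^\mathsf{T}F$ term gives the first integral condition $1=\frac{1}{2N+1}\int M^\mathsf{T}\partial_t G\,\mathrm{d}\phi_{-1}$ after using $\partial_t G = SF$ at the critical point. For each $i\geq 2$, variation of the shift $\varrho_i$ in $\Xi(\phi_{-1}+\varrho_{-1},0)$ brings down a $\partial_{\phi_i}G$ factor; using the boundary relation just established, this reduces to the stated integral condition $0=\frac{1}{2N+1}\int M^\mathsf{T}\partial_{\phi_i}G\,\mathrm{d}\phi_{-1}$. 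The hard part is the variation with respect to $\varrho_1$, because this parameter enters through the rotation matrix $\mathcal{R}(\varrho_1)$ rather than only through a shift. I would use $\mathcal{R}'(\rho_1)=\mathcal{R}(\rho_1)\mathcal{D}$ and the established boundary condition to replace $w^\mathsf{T}(\mathcal{R}(\rho_1)\mathcal{F}\otimes I_n)$ by $M^\mathsf{T}(\phi_{-1}+\rho_{-1},0)$, producing an expression of the form $\int M^\mathsf{T}(\mathcal{F}^\mathsf{T}\mathcal{D}^\mathsf{T}\mathcal{F}\otimes I_n)\,G\,\mathrm{d}\phi_{-1}$ after an appropriate change of variables; the diagonal weighting $\mathrm{diag}(1,1/2,\ldots,1/2)$ arises from the orthogonality identity of the preceding lemma, $\mathcal{F}^\mathsf{T}\mathrm{diag}(1,1/2,\ldots,1/2)\mathcal{F}=I_{2N+1}/(2N+1)$, which is needed to match the continuous inner product $\int_{\mathbb{S}}B^\mathsf{T}(\phi_1)B(\phi_1)\,\mathrm{d}\phi_1$ to its sampled counterpart.

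The main obstacle is exactly this last step: tracking the Fourier--rotation algebra so that the $\varrho_1$ variation translates cleanly into the twisted integral condition containing $\mathcal{F}^\mathsf{T}\mathcal{D}^\mathsf{T}\mathrm{diag}(1,1/2,\ldots,1/2)\mathcal{F}$. Everything else is a direct Fourier-collocation transcription of the continuous derivation in Proposition~\ref{prop: quasi}. Once the variational identities are in hand, showing that a solution $(M,\varrho,S)$ of these conditions yields, via Fourier interpolation, a solution of the original adjoint boundary-value problem (and conversely) is immediate from the bijection between truncated Fourier expansions \eqref{eq:Four1} and their values on the grid $\{\phi_{1,j}\}_{j=1}^{2N+1}$.
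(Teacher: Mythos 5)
Your proposal is correct and follows essentially the same route as the paper's proof: vary the discretized Lagrangian with respect to $\Xi$ to obtain the block-diagonal adjoint equations and the two endpoint relations with a Dirac-delta term, kill the $\kappa_i$ via invertibility of the transversality matrix \eqref{eq:transversality}, eliminate $w$, and recover the integral conditions from the $S$- and $\varrho$-variations using the Fourier identities of the preceding lemma and corollary. The only detail worth making explicit is that putting the eliminated-$w$ relation into the stated form $\left(\mathcal{F}\otimes I_n\right)M(\phi_{-1},1)=\left(\mathcal{R}(\rho_1)\mathcal{F}\otimes I_n\right)M(\phi_{-1}+\rho_{-1},0)$ uses $\mathcal{R}(\rho_1)\mathcal{F}\mathcal{F}^\mathsf{T}\mathcal{R}^\mathsf{T}(\rho_1)=\mathcal{F}\mathcal{F}^\mathsf{T}$, which follows from the weighted orthogonality identity you already invoke.
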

\begin{proof}
The form of the discretization is an immediate consequence of the results of the preceding lemma and corollary. Variations of $L$ with respect to $\Xi$ yield the differential equations as well as the conditions
\begin{align}
0&=M^\mathsf{T}(\phi_{-1},1)-w^\mathsf{T}(\phi_{-1})\left(\mathcal{F}\otimes I_n\right)\\
0&=-M^\mathsf{T}(\phi_{-1},0)+w^\mathsf{T}(\phi_{-1}-\rho_{-1})\left(\mathcal{R}(\varrho_1)\mathcal{F}\otimes I_n\right)\nonumber\\
&\qquad+\delta_\mathrm{D}(\phi_{-1})\begin{pmatrix}1 & 0 & \cdots & 0\end{pmatrix}\otimes\sum_{i=1}^{m+1}\kappa_i\partial_xh_i(\gamma_1(0,0))
\end{align}
in terms of the Dirac delta function. Invertibility of the matrix in \eqref{eq:transversality} again implies that $\kappa_i=0$ for $i=1,\ldots,m$, these yield the discretized boundary conditions since $\mathcal{F}\mathcal{F}^\mathsf{T}=\mathcal{R}(\rho_1)\mathcal{F}\mathcal{F}^\mathsf{T}\mathcal{R}^\mathsf{T}(\rho_1)$.
\end{proof}

\begin{corollary}
The discretized form of the integral conditions obtained from Corollary~\ref{cor:lambdaphi} follows by replacing $\ln S$ in the expression for $L$ by $-\varrho_i$.
\end{corollary}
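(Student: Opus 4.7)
The plan is to rerun the variational calculation from Proposition~\ref{prop:discrete} with only the single scalar term in the Lagrangian altered, and to read off what changes in the Euler-Lagrange equations. First, I would note that both $(2N+1)\ln S$ and $-\varrho_i$ depend on none of $\Xi$, $M$, $w$, or $\kappa_1,\dots,\kappa_{m+1}$, so variations with respect to $\Xi$ yield literally the same discretized adjoint differential equation, the same pointwise transversality relations at $t=0$ and $t=1$, and the same conclusion (via the invertibility hypothesis~\eqref{eq:transversality}) that the multipliers $\kappa_j$ vanish. In particular, the discretized quasiperiodic boundary condition on $M$ carries over unchanged, and only the integral conditions need to be recomputed.

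Next, I would track the two variations that do change. Variation with respect to $S$ no longer produces the $(2N+1)/S$ source, so the resulting condition becomes $0=\int_{\mathbb{S}^{m-1}}\int_0^1 M^\mathsf{T}F\,\mathrm{d}t\,\mathrm{d}\phi_{-1}$ instead of the $1=\cdots$ form from Proposition~\ref{prop:discrete}. Using $\partial_t G=TF$ on the solution and the $t$-independence of $M^\mathsf{T}\partial_t G$ (which follows from the adjoint/variational identity already exploited in that proof), this reduces to $0=\tfrac{1}{2N+1}\int_{\mathbb{S}^{m-1}}M^\mathsf{T}\partial_t G\,\mathrm{d}\phi_{-1}$. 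Variation with respect to $\varrho_j$ is unchanged for $j\ne i$ and therefore remains $0=\cdots$, while variation with respect to $\varrho_i$ picks up an extra $-1$ from the new scalar term and thus becomes $1=\cdots$. The net effect is to relocate the ``$1$'' from the $S$-slot to the $\varrho_i$-slot, which is precisely the pattern of integral conditions associated with $\lambda_{\phi_i}$ in the continuous version Corollary~\ref{cor:lambdaphi}.

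The main obstacle is a bookkeeping one in the case $i=1$, because $\varrho_1$ enters nonlinearly through the rotation matrix $\mathcal{R}(\varrho_1)$ acting on the boundary value $\Xi(\phi_{-1}+\varrho_{-1},0)$, rather than as an argument shift. To reproduce the stated form $\int_{\mathbb{S}^{m-1}} M^\mathsf{T}(\mathcal{F}^\mathsf{T}\mathcal{D}^\mathsf{T}\mathrm{diag}(1,1/2,\dots,1/2)\mathcal{F}\otimes I_n)G\,\mathrm{d}\phi_{-1}$ for the $\varrho_1$-variation, I would combine the identity $\mathcal{R}'(\varrho_1)=\mathcal{R}(\varrho_1)\mathcal{D}$ with the boundary relation $w^\mathsf{T}(\phi_{-1})[\mathcal{F}\otimes I_n]=M^\mathsf{T}(\phi_{-1},1)$ derived in Proposition~\ref{prop:discrete} and the orthogonality relation $\mathcal{F}^\mathsf{T}\mathrm{diag}(1,1/2,\dots,1/2)\mathcal{F}=\tfrac{1}{2N+1}I_{2N+1}$ from the preceding lemma. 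For $i\ge 2$ no such massaging is required: the dependence on $\varrho_i$ enters linearly through the shift in $\Xi(\phi_{-1}+\varrho_{-1},0)$, so differentiation immediately reproduces $\partial_{\phi_i}G$ and the condition $1=\tfrac{1}{2N+1}\int_{\mathbb{S}^{m-1}}M^\mathsf{T}\partial_{\phi_i}G\,\mathrm{d}\phi_{-1}$ drops out directly.
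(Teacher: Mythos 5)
Your proposal is correct and follows essentially the route the paper leaves implicit: the corollary is stated without proof as an immediate rerun of the variational argument for Proposition~\ref{prop:discrete}, in which the $\Xi$-variations (and hence the discretized differential equations, the boundary conditions, and the conclusion $\kappa_i=0$) are untouched, and only the $S$- and $\varrho_i$-variations change so that the nontrivial normalization moves from the $S$-slot to the $\varrho_i$-slot, with the $\varrho_1$ case handled exactly as in the proposition via $\mathcal{R}'(\varrho_1)=\mathcal{R}(\varrho_1)\mathcal{D}$, the relation $w^\mathsf{T}(\phi_{-1})\left(\mathcal{F}\otimes I_n\right)=M^\mathsf{T}(\phi_{-1},1)$, and $\mathcal{F}^\mathsf{T}\diag(1,1/2,\ldots,1/2)\mathcal{F}=\tfrac{1}{2N+1}I_{2N+1}$. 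One small bookkeeping correction: since the term being modified carries the prefactor $2N+1$ (i.e., $(2N+1)\ln S$ becomes $-(2N+1)\varrho_i$), the $\varrho_i$-variation contributes $-(2N+1)$ rather than $-1$, which is precisely what yields the $\tfrac{1}{2N+1}$-normalized condition $1=\tfrac{1}{2N+1}\int_{\mathbb{S}^{m-1}}M^\mathsf{T}\partial_{\phi_i}G\,\mathrm{d}\phi_{-1}$ that you state.
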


\bibliographystyle{plain}
\bibliography{AhsanDankowiczKuehn}

\begin{thebibliography}{10}

\bibitem{10.1115/DETC2022-91153}
Zaid Ahsan, Christian Kuehn, and Harry Dankowicz.
\newblock A boundary-value problem for covariance analysis of stochastically
  perturbed limit cycles.
\newblock In {\em Proceedings of the ASME 2022 International Design Engineering
  Technical Conferences and Computers and Information in Engineering
  Conference. Volume 9: 18th International Conference on Multibody Systems,
  Nonlinear Dynamics, and Control (MSNDC)}, 2022.

\bibitem{baars2017continuation}
S.~Baars, J.P. Viebahn, T.E. Mulder, C.~Kuehn, F.W. Wubs, and H.A. Dijkstra.
\newblock Continuation of probability density functions using a generalized
  {L}yapunov approach.
\newblock {\em Journal of Computational Physics}, 336:627--643, 2017.

\bibitem{Bashkirtseva2017}
I.~Bashkirtseva.
\newblock Stochastic sensitivity analysis: Theory and numerical algorithms.
\newblock {\em IOP Conference Series: Materials Science and Engineering},
  192(1), 2017.

\bibitem{bashkirtseva2004stochastic}
I.A. Bashkirtseva and L.B. Ryashko.
\newblock Stochastic sensitivity of {3D}-cycles.
\newblock {\em Mathematics and Computers in Simulation}, 66(1):55--67, 2004.

\bibitem{bashkirtseva2015stochastic}
Irina Bashkirtseva, Alexander~B. Neiman, and Lev Ryashko.
\newblock Stochastic sensitivity analysis of noise-induced suppression of
  firing and giant variability of spiking in a {H}odgkin-{H}uxley neuron model.
\newblock {\em Phys. Rev. E}, 91:052920, 2015.

\bibitem{BASHKIRTSEVA2014236}
Irina Bashkirtseva and Lev Ryashko.
\newblock Stochastic sensitivity of the closed invariant curves for
  discrete-time systems.
\newblock {\em Physica A: Statistical Mechanics and its Applications},
  410:236--243, 2014.

\bibitem{bashkirtseva2016sensitivity}
Irina Bashkirtseva and Lev Ryashko.
\newblock Sensitivity analysis of stochastically forced quasiperiodic
  self-oscillations.
\newblock {\em Electronic Journal of Differential Equations}, 2016(240):1--12,
  2016.

\bibitem{bashkirtseva2020stochastic}
Irina Bashkirtseva and Lev Ryashko.
\newblock Stochastic bifurcations, chaos and phantom attractors in the
  {L}angford system with tori.
\newblock {\em International Journal of Bifurcation and Chaos}, 30(16), 2020.

\bibitem{berglund2006noise}
Nils Berglund and Barbara Gentz.
\newblock {\em Noise-induced phenomena in slow-fast dynamical systems: a
  sample-paths approach}.
\newblock Springer Science \& Business Media, 2006.

\bibitem{biswas2021characterising}
Saranya Biswas, Aasifa Rounak, Przemys{\l}aw Perlikowski, and Sayan Gupta.
\newblock Characterising stochastic fixed points and limit cycles for dynamical
  systems with additive noise.
\newblock {\em Communications in Nonlinear Science and Numerical Simulation},
  101:105870, 2021.

\bibitem{bittanti2009periodic}
Sergio Bittanti and Patrizio Colaneri.
\newblock {\em Stochastic Periodic Systems}, pages 291--310.
\newblock Springer London, London, 2009.

\bibitem{bolzern1988periodic}
Paolo Bolzern and Patrizio Colaneri.
\newblock The periodic {L}yapunov equation.
\newblock {\em SIAM Journal on Matrix Analysis and Applications},
  9(4):499--512, 1988.

\bibitem{cheng2021stochastic}
Yu-Chen Cheng and Hong Qian.
\newblock Stochastic limit-cycle oscillations of a nonlinear system under
  random perturbations.
\newblock {\em Journal of Statistical Physics}, 182(3), 2021.

\bibitem{dankowicz2013recipes}
Harry Dankowicz and Frank Schilder.
\newblock {\em Recipes for Continuation}.
\newblock Society for Industrial and Applied Mathematics, Philadelphia, PA,
  2013.

\bibitem{Dankowicz2022329}
Harry Dankowicz and Jan Sieber.
\newblock Sensitivity analysis for periodic orbits and quasiperiodic invariant
  tori using the adjoint method.
\newblock {\em Journal of Computational Dynamics}, 9(3):329--369, 2022.

\bibitem{doedel2007auto}
Eusebius~J Doedel, Alan~R Champneys, Fabio Dercole, Thomas~F Fairgrieve, Yu~A
  Kuznetsov, B~Oldeman, RC~Paffenroth, B~Sandstede, XJ~Wang, and CH~Zhang.
\newblock {AUTO}-07p: Continuation and bifurcation software for ordinary
  differential equations.
\newblock \url{https://github.com/auto-07p/auto-07p}.
\newblock Accessed: 2023-08-08.

\bibitem{fan2011covariances}
Hua Fan, Xiuming Shan, Jian Yuan, and Yong Ren.
\newblock Covariances of linear stochastic differential equations for analyzing
  computer networks.
\newblock {\em Tsinghua Science and Technology}, 16(3):264--271, 2011.

\bibitem{Galan-Vioque20142705}
J.~Galan-Vioque, F.J.M. Almaraz, and E.F. Mac{\'{i}}as.
\newblock Continuation of periodic orbits in symmetric {H}amiltonian and
  conservative systems.
\newblock {\em European Physical Journal: Special Topics}, 223(13):2705--2722,
  2014.

\bibitem{Gao202313513}
Jianzhong Gao, Juping Ji, Yanping Liu, Feng Zhang, Ruiwu Wang, and Hao Wang.
\newblock A confidence ellipse analysis for stochastic dynamics model of
  {A}lzheimer's disease.
\newblock {\em Nonlinear Dynamics}, 111(14):13513--13533, 2023.

\bibitem{Garain2022}
Koushik Garain and Partha Sarathi~Mandal.
\newblock Stochastic sensitivity analysis and early warning signals of critical
  transitions in a tri-stable prey-predator system with noise.
\newblock {\em Chaos}, 32(3), 2022.

\bibitem{Giacomin20181019}
Giambattista Giacomin, Christophe Poquet, and Assaf Shapira.
\newblock Small noise and long time phase diffusion in stochastic limit cycle
  oscillators.
\newblock {\em Journal of Differential Equations}, 264(2):1019--1049, 2018.

\bibitem{Guo2020599}
Kongming Guo, Jun Jiang, and Zigang Li.
\newblock Diffusion and persistence of rotor/stator synchronous full annular
  rub response under weak random perturbations.
\newblock {\em Journal of Vibration Engineering and Technologies},
  8(4):599--611, 2020.

\bibitem{Guo2017}
Kongming Guo, Jun Jiang, and Yalan Xu.
\newblock Approximation of stochastic quasi-periodic responses of limit cycles
  in non-equilibrium systems under periodic excitations and weak fluctuations.
\newblock {\em Entropy}, 19(6), 2017.

\bibitem{Halanay1987}
A.~Halanay, C.~Tudor, T.~Morozan, and A.~Halanay.
\newblock Tracking almost periodic signals under white noise perturbations.
\newblock {\em Stochastics}, 21(4):287--301, 1987.

\bibitem{higham2021introduction}
Desmond Higham and Peter Kloeden.
\newblock {\em An Introduction to the Numerical Simulation of Stochastic
  Differential Equations}.
\newblock Society for Industrial and Applied Mathematics, Philadelphia, PA,
  2021.

\bibitem{Huang20234219}
Mengya Huang, Anji Yang, Sanling Yuan, and Tonghua Zhang.
\newblock Stochastic sensitivity analysis and feedback control of noise-induced
  transitions in a predator-prey model with anti-predator behavior.
\newblock {\em Mathematical Biosciences and Engineering}, 20(2):4219--4242,
  2023.

\bibitem{Jungeilges20215849}
Jochen Jungeilges, Trygve~Kastberg Nilssen, Tatyana Perevalova, and Alexander
  Satov.
\newblock Transitions between metastable long-run consumption behaviors in a
  stochastic peer-driven consumer network.
\newblock {\em Discrete and Continuous Dynamical Systems - Series B},
  26(11):5849--5871, 2021.

\bibitem{kuehn2012deterministic}
Christian Kuehn.
\newblock Deterministic continuation of stochastic metastable equilibria via
  {L}yapunov equations and ellipsoids.
\newblock {\em SIAM Journal on Scientific Computing}, 34(3):A1635--A1658, 2012.

\bibitem{kuehn2015efficient}
Christian Kuehn.
\newblock Efficient gluing of numerical continuation and a multiple solution
  method for elliptic {PDE}s.
\newblock {\em Applied Mathematics and Computation}, 266:656--674, 2015.

\bibitem{kuehn2015numerical}
Christian Kuehn.
\newblock Numerical continuation and {SPDE} stability for the {2D}
  cubic-quintic {A}llen--{C}ahn equation.
\newblock {\em SIAM/ASA Journal on Uncertainty Quantification}, 3(1):762--789,
  2015.

\bibitem{kurrer1991effect}
Christian Kurrer and Klaus Schulten.
\newblock Effect of noise and perturbations on limit cycle systems.
\newblock {\em Physica D: Nonlinear Phenomena}, 50(3):311--320, 1991.

\bibitem{li2018staged}
Mingwu Li and Harry Dankowicz.
\newblock Staged construction of adjoints for constrained optimization of
  integro-differential boundary-value problems.
\newblock {\em SIAM Journal on Applied Dynamical Systems}, 17(2):1117--1151,
  2017.

\bibitem{louca2018stable}
Stilianos Louca.
\newblock Stable limit cycles perturbed by noise, 2018.

\bibitem{MANCHESTER20116285}
Ian~R. Manchester.
\newblock Transverse dynamics and regions of stability for nonlinear hybrid
  limit cycles.
\newblock {\em IFAC Proceedings Volumes}, 44(1):6285--6290, 2011.

\bibitem{MENA2019146}
H.~Mena and L.~Pfurtscheller.
\newblock An efficient {SPDE} approach for {E}l {N}i{\~{n}}o.
\newblock {\em Applied Mathematics and Computation}, 352:146--156, 2019.

\bibitem{MUNOZALMARAZ20031}
F.J. Mu{\~{n}}oz-Almaraz, E.~Freire, J.~Gal\'{a}n, E.~Doedel, and
  A.~Vanderbauwhede.
\newblock Continuation of periodic orbits in conservative and {H}amiltonian
  systems.
\newblock {\em Physica D: Nonlinear Phenomena}, 181(1):1--38, 2003.

\bibitem{ryashko2009confidence}
L.~Ryashko, I.~Bashkirtseva, A.~Gubkin, and P.~Stikhin.
\newblock Confidence tori in the analysis of stochastic {3D}-cycles.
\newblock {\em Mathematics and Computers in Simulation}, 80(2):256--269, 2009.

\bibitem{safonov2006noise}
Leonid~A. Safonov and Yoshiharu Yamamoto.
\newblock Noise-driven switching between limit cycles and adaptability in a
  small-dimensional excitable network with balanced coupling.
\newblock {\em Phys. Rev. E}, 73:031914, 2006.

\bibitem{Salman202269}
S.M. Salman, A.M. Yousef, and A.A. Elsadany.
\newblock Dynamic behavior and bifurcation analysis of a deterministic and
  stochastic coupled logistic map system.
\newblock {\em International Journal of Dynamics and Control}, 10(1):69--85,
  2022.

\bibitem{Schilder2005459}
Frank Schilder, Hinke~M. Osinga, and Werner Vogt.
\newblock Continuation of quasi-periodic invariant tori.
\newblock {\em SIAM Journal on Applied Dynamical Systems}, 4(3):459--488, 2005.

\bibitem{tadokoro2020noise}
Yukihiro Tadokoro, Hiroya Tanaka, and MI~Dykman.
\newblock Noise-induced switching from a symmetry-protected shallow metastable
  state.
\newblock {\em Scientific Reports}, 10(1):1--10, 2020.

\bibitem{Wang2021135}
Pascal Wang, Daniele Castellana, and Henk~A. Dijkstra.
\newblock Improvements to the use of the trajectory-adaptive multilevel
  sampling algorithm for the study of rare events.
\newblock {\em Nonlinear Processes in Geophysics}, 28(1):135--151, 2021.

\bibitem{Zhao2022}
Feng Zhao, Yang Li, and Xian-Bin Liu.
\newblock Noise induced escape from stable invariant tori.
\newblock {\em Journal of Statistical Mechanics: Theory and Experiment},
  2022(10), 2022.

\end{thebibliography}
\end{document}